%
%
\documentclass[11pt,twoside]{preprint}
\usepackage{comment}
\usepackage{hyperref}
\usepackage{breakurl}
\usepackage{times}
\usepackage{microtype}
\usepackage{booktabs}

%
%
\usepackage{amsmath}
\makeatletter
\let\over\@@over
\let\atop\@@atop
\makeatother
\usepackage{amssymb}
%
%
\usepackage{mathrsfs}
%
%
\usepackage{mhequ}
\usepackage{mhenvs}
\usepackage{mhsymb}

%
%
\usepackage{enumitem}

\definecolor{darkgreen}{rgb}{0.1,0.7,0.1}
\definecolor{darkred}{rgb}{0.7,0.1,0.1}
\addtolength{\marginparwidth}{2.3em}

\hypersetup{
citecolor=blue,
colorlinks=true, 
urlcolor= blue, 
linkcolor= black, 
bookmarksopen=true, 
pdftitle={Reconstruction}, 
}

\newtheorem{assumption}[lemma]{Assumption}

\newop{diag}

\fboxsep1pt
\fboxrule0.2pt

\newcommand\minus{%
  \setbox0=\hbox{-}%
  \vcenter{%
    \hrule width\wd0 height \the\fontdimen8\textfont3%
  }%
}


%
\makeatother

\newcommand{\aver}[2]{\bar{#1}^{\,^{#2}}}

\def\m{\mathfrak{m}}

\def\s{\mathfrak{s}}

\def\${|\!|\!|}

\def\m{\mathfrak{m}}

\def\BB{\mathscr{B}}

\def\tun{\mathbf{1}}

\newcommand{\cA}{\mathcal{A}}
\newcommand{\cB}{\mathcal{B}}
\newcommand{\cC}{\mathcal{C}}
\newcommand{\cD}{\mathcal{D}}
\newcommand{\cE}{\mathcal{E}}

\newcommand{\cG}{\mathcal{G}}

\newcommand{\cI}{\mathcal{I}}

\newcommand{\cP}{\mathcal{P}}
\newcommand{\cQ}{\mathcal{Q}}
\newcommand{\cR}{\mathcal{R}}

\newcommand{\cT}{\mathcal{T}}

\begin{document}

\title{The reconstruction theorem in Besov spaces}
\author{Martin Hairer$^1$ and Cyril Labb\'e$^2$}
\institute{University of Warwick, \email{M.Hairer@Warwick.ac.uk}
\and Universit\'e Paris Dauphine, \email{Labbe@Ceremade.Dauphine.fr}}

\date{\today}

\maketitle

\begin{abstract}
The theory of regularity structures~\cite{Hairer2014} sets up an abstract framework of 
\textit{modelled distributions} generalising the usual
H\"older functions and allowing one to give a meaning to several ill-posed stochastic 
PDEs. A key result in that theory is the so-called reconstruction theorem: it defines a continuous 
linear operator that maps spaces of modelled distributions into the 
usual space of distributions. In the present paper, we extend the scope of this 
theorem to analogues to the whole class of Besov spaces $\cB^\gamma_{p,q}$ with non-integer regularity indices. We then show that these spaces behave very much like their classical counterparts by
obtaining the corresponding embedding theorems and Schauder-type estimates.
\end{abstract}

\tableofcontents

\section{Introduction}

The theory of regularity structures~\cite{Hairer2014} provides an analytic framework which 
turns out to be powerful in providing solution theories to classes of 
singular parabolic stochastic PDEs. An important aspect of the theory is that, instead of describing the solution 
as an element of one of the classical spaces of functions/distributions, 
one provides a local description thereof as \textit{generalised} Taylor polynomials attached
to every space-time point. 
In the special case of smooth functions, this simply corresponds to
Whitney's \cite{Whitney} interpretation of a H\"older function as the corresponding collection 
of usual Taylor polynomials associated to it. In the setting of stochastic PDEs, it is
helpful to enrich the collection of 
usual monomials with some appropriate functionals built from the driving noise.
Then, the solution of the stochastic PDE can (under some assumptions, of course) locally be expanded 
on this enlarged basis of monomials. In the case of some ill-posed stochastic PDEs, this
procedure, or some closely related procedure as in \cite{Max},
is already required to give a rigorous interpretation of what one even \textit{means} for a
(random) function/distribution to be a solution to the equation. We provide a more detailed presentation of the theory at the end of this introduction.

The original framework of the theory~\cite{Hairer2014} used direct analogues to 
H\"older spaces of functions, but it turns out that this can be generalised to the 
whole class of Besov spaces and this is the main purpose of the present work.
One motivation for this generalisation arose in a recent work on the construction of the solution of multiplicative stochastic heat equations starting from a Dirac mass at time $0$, see~\cite{mSHE}. Therein, we adapted the theory of regularity structures to $\cB^\alpha_{p,\infty}$-like spaces in order to start the equation from this specific initial condition. Indeed, while the Dirac mass in $\R^d$ has (optimal) regularity index $-d$ in H\"older spaces of distributions, it also belongs to $\cB^{-d+d/p}_{p,\infty}$ for all $p\in [1,\infty]$: this improved regularity makes the analysis 
of the PDE much simpler when working in Besov spaces.\\
Another motivation comes from Malliavin calculus. Indeed, for proving Malliavin differentiability of the solution of an SPDE, one first constructs the solution of the equation driven by a noise $\xi$ shifted in the directions of its Cameron-Martin space - typically an $L^2$ space. To that end, one can enlarge the regularity structure to include abstract monomials associated to the shift: we point out the recent work~\cite{MalliavinReg} of Cannizzaro, Friz and Gassiat on the generalised parabolic Anderson model in dimension $2$. In the case where the SPDE is additive in the noise, an alternative approach consists in lifting the shift into the polynomial regularity structure: the natural framework would then be given by $\cB_{2,2}$-type spaces of modelled distributions.\\
We also mention the very recent work 
of Pr\"omel and Teichmann~\cite{DavidJosef} where the analytical framework of the theory of regularity structures is adapted to $\cB^\gamma_{p,p}$-type spaces. We now present in more details the definitions and results obtained in the present article.

Although there is no canonical choice for the space of \textit{modelled distributions} $\cD^\gamma_{p,q}$ that would mimic the Besov space $\cB^\gamma_{p,q}$, we opt for a definition as close as possible--at least formally--to the definition of classical Besov spaces via differences, see Definition \ref{Def:Dgamma}. Then, the main results of this paper are twofold. On the one hand, we construct a ``consistent" continuous linear map from $\cD^\gamma_{p,q}$ to $\cB^{\bar\alpha}_{p,q}$
for a suitable value of $\bar\alpha$: this is what is called a \textit{reconstruction theorem}, see Theorem \ref{Th:Reconstruction}. On the other hand, we establish continuous embeddings between the spaces $\cD^\gamma_{p,q}$, see Theorem \ref{Th:Embedding}: these embeddings are the analogues of the embedding theorems that the classical Besov spaces enjoy.

The proof of the reconstruction theorem follows from similar arguments to those developed 
in~\cite{Hairer2014}. Let us recall here that the reconstruction theorem was inspired by the 
\textit{sewing lemma} of Gubinelli~\cite{Gub04}.

The embedding theorems are more delicate. Even for classical Besov spaces, their proofs are rather sensitive to the definition one chooses. In particular, they are immediate if one relies on a countable characterisation of Besov spaces (for instance, via a wavelet analysis or via the Littlewood-Paley decomposition): in that case the embedding theorems are consequences of continuous embeddings of $\ell^p$-type spaces. On the other hand, the proofs become involved if one starts from the definition via differences, see for instance Adams~\cite[Chap.V]{Adams}, Di Nezza et al.~\cite[Thm 8.2]{Hitchhiker} or Nirenberg~\cite{Nirenberg}. As we said above, our definition of modelled distributions is in the spirit of the definition of Besov spaces via differences, thus one does not expect the proofs of the embeddings to be any simpler than in the classical case. Furthermore, our setting is more complex since we are dealing not only with classical monomials, but we also 
allow for very rough functions/distributions to be represented by some of the basis vectors in our regularity structure.

The main trick that spares us technical arguments in the proofs is the following intermediate result, which may be of independent interest. If one performs suitable averages of a modelled distribution over balls of radius $2^{-n}$ centred at the points of a discrete grid, for every $n\geq 0$, then one 
obtains a countable norm which is closer in spirit to the wavelet characterisation of Besov spaces, see Definition \ref{Def:Dgammabar}. At the level of these spaces of local averages, the embedding theorems are simple to prove. Then, the key result is the equivalence between the two spaces, which is 
obtained in Theorem \ref{Th:EqDgamma}.

Let us finally mention that the embedding theorems obtained in this article allow to show that the solution to the parabolic Anderson model
\begin{equ}\label{Eq:PAM}
\partial_t u = \Delta u + u\cdot\xi\;,\quad x\in \R^3\;,
\end{equ}
(where $\xi$ is a white noise in space) that we obtained in~\cite{mSHE} is actually H\"older continuous with index $1/2-$ as a function of the spatial variable, while it was only shown therein that it belongs (locally in space) to the Besov space $\cB^{1/2-}_{p,\infty}$ with $p$ close to $1$. We refer to Subsection \ref{Subsec:PAM} for more details.

\subsection*{The theory of regularity structures in a nutshell}

In the theory of regularity structures, one describes a function/distribution locally through its collection 
of generalised Taylor expansions up to a certain maximal degree, say $\gamma$. When dealing with a smooth 
function $F:\R^d \rightarrow \R$, this simply corresponds to the collection of polynomials:
$$ f(x_0) = \sum_{k\in \N^d: |k| < \gamma} \frac1{k!} \partial_x^k F(x_0)\, X^k\;,\quad x_0 \in \R^d\;,$$
where $|k| = \sum_{i=1}^d k_i$ is the degree of the monomial $X^k = \prod_{i=1}^k X_i^{k_i}$ and $x_0$ is the base point at which the expansion is taken. The coefficients $f_k(x_0) := \frac1{k!} \partial_x^k F(x_0)$ satisfy some analytical bounds which, in particular, reflect the consistency of the expansions taken at different base points $x_0$ and $y_0$. For instance, if $F$ is a H\"older function with index $\gamma$ then one has:
\begin{equ}\label{Eq:BoundDg}
\Big| f_\ell(x_0) - \sum_{k\in \N^d: |k+\ell| < \gamma} \binom{k+\ell}{k}(x_0-y_0)^k f_{k+\ell}(y_0) \Big| \lesssim |x_0-y_0|^{\gamma-|\ell|}\;.
\end{equ}
One can thus define the space $\cD^\gamma_{\infty,\infty}$ of coefficients $x_0 \mapsto (f_k(x_0); |k| < \gamma)$ 
satisfying \eqref{Eq:BoundDg} and show that this space is in continuous bijection with the classical space 
of $\gamma$-H\"older functions on $\R^d$. Note in particular that \eqref{Eq:BoundDg} \textit{forces} the coefficients
$f_k$ to be of the form $\frac1{k!} \partial_x^k F(x_0)$ for some $F \in \CC^\gamma$, this does not have to be
assumed a priori.

When solving singular stochastic PDEs, one does not expect the solution to have much regularity so that 
a description 
as a collection of \textit{classical} Taylor expansions would not be of much use. For instance, 
the solution to the $1$-dimensional linear stochastic heat equation driven by a space-time white noise is 
H\"older-${1\over 2}^-$ in space so that no information can be gained by considering Taylor expansions 
of order greater than zero. 
The key idea is then the 
following: if one enriches the basis of monomials with appropriate elements associated to the driving noise, then 
one can push the expansion further and make sense of the ``derivatives'' of the solutions, even 
in situations where they are \textit{not} 
differentiable in the classical sense. More precisely, if one introduces a symbol $\Xi$ for the noise and $\cI(\Xi)$ 
for the convolution of the noise with the heat kernel, then the solutions to a large class of nonlinear
variants of the stochastic heat equation admit a 
generalised Taylor expansion up to any arbitrary order $\gamma > 0$: this expansion will include a 
term proportional to 
$\cI(\Xi)$, encoding the fact that the solution will locally look like some multiple to the 
stochastic heat equation, as well as other ``non-standard'' higher-order terms. In general, the collection of 
monomials that should be added to the classical basis of Taylor monomials depends on the class of SPDEs at 
stake. In any case, this naturally leads to defining 
spaces $\cD^\gamma_{\infty,\infty}$ of coefficients on some enlarged basis, satisfying a very natural analogue to
the bounds to \eqref{Eq:BoundDg}.

While in the smooth case, the mapping from $\cD^\gamma_{\infty,\infty}$ into the space of H\"older functions was trivial (one has $F = f_0$), this is no longer the case in general, especially in situations where the
local expansion describes a distribution rather than a continuous function. In this case, one needs to 
show that there is a natural \textit{reconstruction operator} that associates a genuine function/distribution to any element in $\cD^\gamma_{\infty,\infty}$. In other words, one needs to show that the local descriptions prescribed by 
the generalised Taylor expansions can be patched together in a consistent way. Such a result is called a reconstruction theorem.

The upshot of this framework of generalised Taylor expansions is that it allows to make sense of some ill-posed products and opens the way to solving singular SPDEs. Indeed, this framework essentially reduces the problem to making sense of the products of basis elements, for example $\Xi$ times $\cI(\Xi)$. This is done through renormalisation procedures, we refer to~\cite{Hairer2014,Lorenzo,Ajay} for more details.

The paper is organised as follows. In Section \ref{Sec:Prelim}, we introduce the definitions necessary for our analysis and we define our spaces of modelled distributions. In Section \ref{Sec:Reconstruction}, we state and prove our reconstruction theorem. Section \ref{Sec:Embeddings} is devoted to the embedding theorems for modelled distributions. In the last section, we prove Schauder-type estimates at the level of our spaces of modelled distributions.

\subsection*{Acknowledgements}
MH gratefully acknowledges financial support from the
 Leverhulme Trust through a leadership award
 and from the European Research Council through a consolidator grant, project 615897.
CL acknowledges financial support from the ANR grant SINGULAR ANR-16-CE40-0020-01.

\section{Preliminaries}\label{Sec:Prelim}
\subsection{Spaces of distributions and wavelet analysis}\label{Section:Wavelet}

We are given a scaling $\s=(\s_1,\ldots,\s_d) \in \N^d$. Without further mention, we will always consider the $\s$-scaled ``norm'' $\|x\|_{\s} = \sup_{i=1,\ldots,d} |x_i|^{1/ \s_i}$ for all $x\in\R^d$, and $B(x,r)$ will denote the closed ball centred at $x$, and of radius $r$ with respect to the $\s$-scaled norm. Additionally, for any $k\in\N^d$ we will use the notation $X^k$ to denote the monomial $\prod_{i=1}^d X_i^{k_i}$, and we will call $|k|_{\s} = \sum_{i=1}^d \s_i k_i$ its \textit{scaled degree}.

Let $\cC^r(\R^d)$ be the space of functions $f:\R^d\rightarrow\R$ that admit continuous derivatives of order $k$, for all $k\in\N^d$ such that $|k|_\s \leq r$. We let $\BB^r(\R^d)$ be the subset of $\cC^\infty(\R^d)$ whose elements are supported in the centred ball of $\s$-scaled radius $1$ in $\R^d$ and are of $\cC^r$-norm bounded by $1$. Then, we let $\BB^r_\beta(\R^d)$ be the set of functions in $\BB^r(\R^d)$ that annihilate all polynomials of scaled degree at most $\beta$. We
also write $\BB^r_{-1}(\R^d)=\BB^r(\R^d)$.

From now on, $L^p$ will always refer to $L^p(\R^d,dx)$ and $x$ will be the associated integration variable, while $L^q_\lambda$ will be taken to be $L^q((0,1),\lambda^{-1}d\lambda)$ and $\lambda$ will be the associated integration variable. As usual, the notation $\langle f,g \rangle$ will 
be used both to denote the $L^2$-inner product of $f$ and $g$ and the evaluation of the 
distribution $f$ against the test function $g$.

We consider the space of \textit{tempered distributions} $\cD'(\R^d)$, that is, the topological dual of the Schwartz space $\cD(\R^d)$ of rapidly decreasing, infinitely differentiable functions.

\begin{definition}\label{Def:Besov}
Let $\alpha \in \R$, $p,q \in [1,\infty]$ and $r \in \N$ such that $r > |\alpha|$. For $\alpha < 0$, we let $\cB^{\alpha}_{p,q}(\R^d)$ be the space of distributions $\xi$ on $\R^d$ such that
\begin{equ}
\bigg\| \Big\| \sup_{\eta \in \BB^r(\R^d)}\frac{\big|\langle \xi, \eta_x^\lambda\rangle\big|}{\lambda^\alpha} \Big\|_{L^p} \bigg\|_{L^q_\lambda} < \infty\;.
\end{equ}
For $\alpha \geq 0$, this condition is replaced by
\begin{equ}\label{Eq:BoundDefBesovPos}
\Big\| \sup_{\eta \in \BB^r(\R^d)} \big|\langle \xi, \eta_x\rangle \big| \Big\|_{L^p} < \infty\;,\quad \bigg\| \Big\| \sup_{\eta \in \BB^r_{\lfloor \alpha \rfloor}(\R^d)}\frac{\big|\langle \xi, \eta_x^\lambda\rangle\big|}{\lambda^\alpha} \Big\|_{L^p} \bigg\|_{L^q_\lambda} < \infty\;.
\end{equ}
\end{definition}

Here, we used the notations $\eta_x$ and $\eta_x^\lambda$ as in \cite{Hairer2014}
to denote the test function $\eta$ recentred around $x$ and rescaled by $\lambda$.

\begin{remark}
For $\lambda\in (0,1]$, let $n\geq 0$ be the largest integer such that $2^{-n}\geq \lambda$. For any $\eta\in\BB^r$, the rescaled function $\eta^\lambda$  can always be viewed as some function $\psi^{2^{-n}}$ times a constant $C>0$, where $\psi\in\BB^r$. The constant $C$ is uniformly bounded over all $\lambda\in(0,1]$ and all $\eta\in\BB^r$. Consequently, the norm
$$\bigg\| \Big\| \sup_{\eta \in \BB^r(\R^d)}\frac{\big|\langle \xi, \eta_x^\lambda\rangle\big|}{\lambda^\alpha} \Big\|_{L^p} \bigg\|_{L^q_\lambda}\;,$$
can be replaced by
$$ \bigg(\sum_{n\ge 0} \Big\| \sup_{\eta \in \BB^r(\R^d)}\frac{\big|\langle \xi, \eta_x^\lambda\rangle\big|}{\lambda^\alpha} \Big\|_{L^p}^q \bigg)^{\frac1{q}}\;,$$
without altering the corresponding space of distributions, and similarly for \eqref{Eq:BoundDefBesovPos}.
\end{remark}

\begin{remark}
We choose to work with a scaling $\s$ in the Besov-space norm as this is better fitted for measuring regularity of solutions to PDEs where one direction - typically time - has a different scaling behaviour than the others - typically space. For instance, the parabolic operator $\partial_t - \partial^2_x$ is naturally associated with the scaling $\s =(2,1,\ldots,1)$ where the first coordinate is time and the $d-1$ others are space.
\end{remark}

There exists a simple characterisation of these spaces of distributions in terms of a \textit{wavelet analysis}; we refer to the works of Meyer~\cite{Meyer} and Daubechies~\cite{Ingrid} for more details on wavelet analysis, here we simply recall some basic facts. For every $r>0$, there exists a compactly supported function $\varphi\in\cC^r(\R)$ such that:\begin{enumerate}
\item We have $\langle\varphi(\cdot),\varphi(\cdot+k)\rangle=\delta_{k,0}$ for every $k\in\Z$,
\item There exist $a_k,k\in\Z$ with only finitely many non-zero values such that $\varphi(x)=\sum_{k\in\Z}a_k\varphi(2x-k)$ for every $x\in\R$,
\item For every polynomial $P$ of degree at most $r$, we have
\begin{equ}
\sum_{k\in\Z} \int_{y\in\R} P(y) \varphi(y-k) dy\, \varphi(x-k) = P(x)\;.
\end{equ}
\end{enumerate}
Given such a function $\varphi$, we set
\[ \varphi_x^n(y)=\prod_{i=1}^{d}2^{\frac{n\s_i}{2}}\varphi\big(2^{n\s_i}(y_i-x_i)\big) \;.\]
Then, we define an $\s$-scaled grid of mesh $2^{-n}$
\[ \Lambda_n:=\Big\{(2^{-n\s_1}k_1,\ldots,2^{-n\s_d}k_d):k_i\in\Z\Big\} \;,\]
and we let $V_n$ be the subspace of $L^2(\R^d)$ generated by $\{\varphi^{n}_x:x\in\Lambda_n\}$. Using the second defining property of the function $\varphi$, we deduce that $V_n \subset V_{n+1}$.

Finally, there exists a finite set $\Psi$ of compactly supported functions in $\cC^r$, that annihilate all polynomials of degree at most $r$, and such that for every $n\geq 0$,
\[ \{\varphi_x^n:x\in\Lambda_n\} \cup \{\psi_x^m:m\geq n, \psi\in\Psi, x\in\Lambda_m\} \;,\]
forms an orthonormal basis of $L^2(\R^d)$. Notice that the subspace of $L^2(\R^d)$ generated by the set $\{\psi_x^n:\psi\in\Psi, x\in\Lambda_n\}$ coincides with $V_n^\perp$, the orthogonal complement of $V_n$ into $V_{n+1}$. In the sequel, it will be convenient to denote by $\cP_n$ and $\cP_{n}^\perp$ the orthogonal projections on $V_n$ and $V_n^\perp$.

To simplify notation, we let $\ell^p_n$ be the Banach space of all sequences $u(x),x\in\Lambda_n$ such that
\begin{equ}
\big\| u(x) \big\|_{\ell^p_n} := \Big( \sum_{x\in\Lambda_n} 2^{-n|\s|} |u(x)|^p \Big)^{\frac{1}{p}} < \infty\;.
\end{equ}
We let appear $x$ in the norm in order to emphasise the associated integration variable: this will allow to distinguish parameters from integration variables later on. We also let $\ell^q$ be the usual Banach space of all sequences $u(n),n\in \N$ whose $\ell^q$-norm is finite. We will sometimes use the notation
\begin{equ}
	\|u(x)\|_{\ell^q(n\geq n_0)} := \Big(\sum_{n\geq n_0} u(n)^q\Big)^{\frac{1}{q}}\;,
\end{equ}
for any given $n_0\geq 1$. With all these definitions at hand, we have the following 
alternative characterisation of the Besov spaces $\cB^\alpha_{p,q}$.

\begin{proposition}\label{PropCharact}
Let $\alpha \in \R$ and $p,q \in [1,\infty]$. Take $r\in \N$ such that $r > |\alpha|$. Let $\xi$ be an element of $\cB^\alpha_{p,q}$, and set $a^{n,\psi}_x:=\langle \xi , \psi^n_x \rangle$, $x\in\Lambda_n$, $n\geq 0$, $\psi\in\Psi$ and $b^0_x:=\langle \xi , \varphi^0_x \rangle$, $x\in\Lambda_0$. Then, we have
\begin{equs}\label{Eq:CondWave}
\sup_{\psi\in\Psi}\bigg\| \Big\| \frac{a^{n,\psi}_x }{2^{-n\frac{|\s|}{2} -n \alpha}} \Big\|_{\ell^p_n} \bigg\|_{\ell^q} < \infty \;,\quad\Big\| b^0_x \Big\|_{\ell^p_0} &< \infty \;.
\end{equs}
Conversely, given two sequences $a^{n,\psi}_x$, $x\in\Lambda_n$, $n\geq 0$, $\psi\in\Psi$ and $b^0_x$, $x\in\Lambda_0$, such that (\ref{Eq:CondWave}) is satisfied, there exists a distribution $\xi \in \cB^\alpha_{p,q}$ whose evaluations against the wavelet basis are given by the coefficients $a^{n,\psi}_x$ and $b^0_x$.
\end{proposition}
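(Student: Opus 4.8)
The plan is to establish the equivalence of the Besov norm (in the difference-based form of Definition~\ref{Def:Besov}, or rather its dyadic reformulation from the Remark) with the wavelet-coefficient norm~\eqref{Eq:CondWave}, in both directions. I will treat only the case $\alpha < 0$ in detail; the case $\alpha \geq 0$ differs by keeping track separately of the coarse-scale term $\|b^0_x\|_{\ell^p_0}$ and of the annihilation condition on the test functions, but is structurally the same.

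First I would prove the direct implication: given $\xi \in \cB^\alpha_{p,q}$, I want to bound the wavelet coefficients. The point is that each $\psi^n_x$ is, up to the fixed multiplicative constant $2^{-n|\s|/2}$ coming from the $L^2$-normalisation, a rescaled-and-recentred test function $2^{n|\s|/2}\psi^n_x = C\,\tilde\psi_x^{2^{-n}}$ for some $\tilde\psi$ in (a fixed multiple of) $\BB^r_{-1}$; since the $\psi\in\Psi$ annihilate polynomials of degree $\le r > |\alpha|$, we may even take $\tilde\psi\in\BB^r_{\lfloor\alpha\rfloor}$ when $\alpha\geq 0$. Hence $|a^{n,\psi}_x| = |\langle\xi,\psi^n_x\rangle| \lesssim 2^{-n|\s|/2}\,\sup_{\eta\in\BB^r}|\langle\xi,\eta_x^{2^{-n}}\rangle|$ \emph{pointwise in $x\in\Lambda_n$}. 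Taking $\ell^p_n$ norms, the sum over $x\in\Lambda_n$ with weight $2^{-n|\s|}$ is dominated by the integral $\|\sup_\eta|\langle\xi,\eta_x^{2^{-n}}\rangle|\|_{L^p}^p$, because the balls $B(x,2^{-n})$, $x\in\Lambda_n$, have bounded overlap and the supremum over $\BB^r$ absorbs translations within such a ball; dividing by $2^{-n\alpha}$ and taking $\ell^q$ in $n$ gives~\eqref{Eq:CondWave} via the dyadic form of the Besov norm stated in the Remark.

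The converse is the more delicate direction and is where I expect the main work. Given sequences $(a^{n,\psi}_x), (b^0_x)$ satisfying~\eqref{Eq:CondWave}, I would define
\begin{equ}
\xi := \sum_{x\in\Lambda_0} b^0_x\,\varphi^0_x \;+\; \sum_{n\geq 0}\sum_{\psi\in\Psi}\sum_{x\in\Lambda_n} a^{n,\psi}_x\,\psi^n_x\;,
\end{equ}
first checking that this series converges in $\cD'(\R^d)$: pairing against a fixed Schwartz function $g$, the sum over $x\in\Lambda_n$ at level $n$ is controlled by H\"older's inequality in $x$ ($\ell^p_n$ against $\ell^{p'}_n$) using the rapid decay and smoothness of $g$ to bound $\|\langle g,\psi^n_x\rangle\|_{\ell^{p'}_n}$ by $C_N 2^{-n(r\wedge N)}2^{-n|\s|/2}$ for arbitrarily large $N$, and the factor $2^{-nr}$ beats the $2^{n\alpha}$ growth permitted by~\eqref{Eq:CondWave} since $r > |\alpha|$; summability in $n$ then follows from H\"older in $n$ against $\ell^{q'}$. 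Then I must show $\xi\in\cB^\alpha_{p,q}$, i.e. bound $\sup_{\eta\in\BB^r}|\langle\xi,\eta_x^{2^{-m}}\rangle|$ in $L^p_x$ and then in $\ell^q_m$. Here one splits the sum over wavelet levels $n$ at $n=m$. For $n\leq m$ (wavelets coarser than the test function), one uses the cancellation of $\eta_x^{2^{-m}}$ — either directly if the $\eta$ are taken in $\BB^r_{\lfloor\alpha\rfloor}$, or by the smoothness of $\psi^n$ — to gain a factor $2^{-(m-n)(r\wedge\text{something})}$; for $n > m$ (finer wavelets) one uses instead that $\psi^n_x$ annihilates polynomials to gain $2^{-(n-m)(r+|\s|)}$ or similar. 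In both regimes the key quantitative input is a pointwise-in-$x$ bound of the form
\begin{equ}
\Big|\sum_{y\in\Lambda_n}a^{n,\psi}_y\,\langle\psi^n_y,\eta_x^{2^{-m}}\rangle\Big| \lesssim 2^{-\delta|n-m|}\,2^{-n|\s|/p'}\,\|a^{n,\psi}\|_{\ell^p_n,\,\text{loc near }x}
\end{equ}
for some $\delta>0$, obtained from the localisation of both families of functions; summing the geometric series in $|n-m|$, taking $L^p_x$ (again exploiting bounded overlap of the dyadic balls to pass between $\ell^p_n$-sums and $L^p$-integrals) and finally $\ell^q_m$ via a discrete convolution/Young inequality (a geometric kernel in $|n-m|$ acting on an $\ell^q$ sequence) reproduces~\eqref{Eq:CondWave}. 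The orthonormality of the wavelet basis guarantees that the coefficients recovered from $\xi$ are exactly the prescribed ones.

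The main obstacle, then, is the bookkeeping in the converse direction: handling the two regimes $n\leq m$ and $n>m$ with the correct powers, and repeatedly and carefully exchanging the discrete $\ell^p_n$-sums over the grid with the $L^p$-integral in $x$ — this exchange is clean only because the scaled balls $B(x,2^{-n})$ have uniformly bounded overlap and because $\eta\in\BB^r$ is supported in a unit ball, so that at scale $m$ only $O(2^{(n-m)|\s|}\vee 1)$ grid points $y\in\Lambda_n$ interact with a given $\eta_x^{2^{-m}}$. None of the individual estimates is hard, but getting the exponents to line up so that the geometric series in $|n-m|$ and the Young inequality in $n$ close is where care is needed.
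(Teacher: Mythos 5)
Your approach is correct and essentially the same as the paper's: the direct direction identifies each $\psi^n_x$ with a rescaled, recentred test function carrying the $L^2$-normalisation factor $2^{-n|\s|/2}$, and the converse defines $\xi$ via the wavelet series, splits the wavelet levels at the scale $n_0\approx\log_2(1/\lambda)$ of the test function, uses the annihilation property of $\psi$ in the fine regime and Taylor expansion plus the annihilation of $\eta$ (for $\alpha\ge 0$; for $\alpha<0$ the gain comes simply from $\beta-\alpha>0$) in the coarse regime, and closes with geometric summation in $|n-n_0|$ and Jensen/Young in $n$. The paper organises the case distinctions slightly differently (tracking $\beta=\lfloor\alpha\rfloor+1$ or $0$, and treating the first bound of \eqref{Eq:BoundDefBesovPos} via the choice $\lambda=1$), but this is the same argument.
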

As a consequence, (\ref{Eq:CondWave}) provides an equivalent norm to the $\cB^\alpha_{p,q}$ norm introduced in Definition \ref{Def:Besov}. In the sequel, the notation $\|\cdot\|_{\cB^\alpha_{p,q}}$ will refer indifferently to either of these two norms without further mention.
\begin{remark}\label{RkCharact}
These conditions can be restated at another scale. More precisely, given $n_0\geq 0$, the proposition still holds if (\ref{Eq:CondWave}) is replaced by
\begin{equ}[Eq:CondWave2]
\Big\|\frac{\langle \xi , \varphi^{n_0}_x \rangle}{2^{-n_0\frac{|\s|}{2}}} \Big\|_{\ell^p_{n_0}} < \infty \;,\quad \sup_{\psi\in\Psi}\bigg\| \Big\|\frac{\langle \xi , \psi^{n}_x \rangle }{2^{-n\frac{|\s|}{2} -n \alpha}}\Big\|_{\ell^p_n}\bigg\|_{\ell^q(n\geq n_0)} < \infty \;.\qquad
\end{equ}
\end{remark}
\begin{remark}
As it is pointed out in the introduction, this characterisation yields immediately the classical embedding theorems. In particular, we have the continuous inclusion $\cB^\alpha_{p,q} \subset \cB^\alpha_{p,\infty}$ that we will use at several occasions later on.
\end{remark}

This type of characterisation is classical, see for example \cite[Sec.~6.10]{Meyer}. 
The only specificity of the present result comes from the scaling $\s$ that we are working with.
\begin{proof}[of Proposition~\ref{PropCharact}]
Let $\xi\in\cB^\alpha_{p,q}$. There exists a constant $\kappa>0$, depending only on the size of the support of $\psi$ such that the following holds true. Uniformly over all $n\geq 1$, $\lambda\in [\kappa 2^{-n-1},\kappa 2^{-n})$, $x\in\Lambda_n$ and $y\in B(x,\lambda)$, the function $\psi^n_x$ is of the form $\eta^\lambda_y$ up to a constant multiplicative factor of order $2^{-n|\s|/2}$. Here, $\eta \in \BB^r_\beta$ with $\beta=-1$ when $\alpha \leq 0$, and $\beta=\lfloor \alpha \rfloor$ when $\alpha \geq 0$. Therefore, the definition of $\cB^\alpha_{p,q}$ ensures that the first condition of (\ref{Eq:CondWave}) holds. The second condition of (\ref{Eq:CondWave}) follows from similar arguments.

Conversely, we assume that (\ref{Eq:CondWave}) holds. We need to show that for all $\eta\in\BB^r$
\begin{equ}\label{Eq:Defxi}
\sum_{y\in\Lambda_0} b^0_y \langle \varphi_y^0 , \eta^\lambda_x \rangle + \sum_{\psi\in\Psi} \sum_{n\geq 0} \sum_{y\in\Lambda_n} a^{n,\psi}_y \langle \psi_y^n , \eta^\lambda_x \rangle \;,
\end{equ}
converges and satisfies the bound(s) of Definition \ref{Def:Besov}. Once this is established, we simply define $\langle \xi,\eta^\lambda_x\rangle$ as the value of this series. Then, it is elementary to check that this can be extended into a genuine distribution that belongs to $\cB^\alpha_{p,q}$.

Let $M$ be the maximum of the sizes of the support (for the scaled distance) of $\varphi$ and $\psi\in\Psi$. We start with the first term of (\ref{Eq:Defxi}). Set $\beta=\lfloor \alpha \rfloor+1$ if $\alpha \geq 0$, otherwise set $\beta = 0$. Using the Taylor expansion of $\varphi_y^0$ at $x$, we deduce that $|\langle \varphi_y^0 , \eta^\lambda_x \rangle| \lesssim \lambda^{\beta}$ uniformly over all $x,y \in \R^d$, all $\lambda\in (0,1]$ and all $\eta\in\BB^r_{\beta-1}$. Furthermore, this inner product vanishes as soon as $\|x-y\|_{\s} > \lambda + M$, so that there are only finitely many $y\in\Lambda_0$ with a non-zero contribution, uniformly over all $\lambda\in (0,1]$ and $x\in\R^d$. For all $\alpha \in \R$, we get
\begin{equs}
\Big\| \sup_{\eta\in\BB^r}\Big| \sum_{y\in\Lambda_0} b^0_y \langle \varphi_y^0 , \eta_x \rangle \Big| \Big\|_{L^p} &\lesssim \Big(\int_{x\in\R^d} \sum_{y\in\Lambda_0:\|x-y\|_{\s} \leq \lambda + M} |b^0_y|^p dx \Big)^{\frac1{p}}\\
&\lesssim \Big(\sum_{y\in\Lambda_0} \big|b^0_y \big|^p \Big)^{\frac{1}{p}}\;,
\end{equs}
which is finite by (\ref{Eq:CondWave}). Similarly, for $\alpha \geq 0$ we get
\begin{equs}
\bigg\| \Big\| \sup_{\eta\in\BB^r_{\beta-1}}\frac{\big| \sum_{y\in\Lambda_0} b^0_y \langle \varphi_y^0 , \eta^\lambda_x \rangle \big|}{\lambda^\alpha} \Big\|_{L^p} \bigg\|_{L^q}& \lesssim \big\|\lambda^{\beta - \alpha} \big\|_{L^q} \Big(\sum_{x\in\Lambda_0} \big|b^0_x \big|^p \Big)^{\frac{1}{p}}\;,
\end{equs}
which is finite by (\ref{Eq:CondWave}) and since $\beta-\alpha > 0$.

We now turn to the second term of (\ref{Eq:Defxi}). Fix $\psi\in\Psi$. For any $\lambda\in (0,1]$, we let $n_0$ be the largest integer such that $2^{-n_0} \geq \lambda$. We need to argue differently according to the relative values of $n$ and $n_0$.

We start with the case $n<n_0$, which does not cover the first bound of (\ref{Eq:BoundDefBesovPos}). Take $\beta$ as above. Using the Taylor expansion of $\psi$ at $x$, we deduce that $|\langle \psi_y^n , \eta^\lambda_x \rangle| \lesssim 2^{n(\frac{|\s|}{2} + \beta)}\lambda^\beta$ uniformly over all $x,y \in \R^d$, all $\lambda\in (0,1]$, all $\eta\in\BB^r_{\beta-1}$ and all $n<n_0$. Furthermore, this inner product vanishes as soon as $\|x-y\|_{\s} > \lambda + M 2^{-n}$ so that only finitely many $y\in\Lambda_n$ yield a non-zero contribution, uniformly over all the parameters. Using the triangle inequality at the second line and Jensen's inequality on the sum over $y$ at the fourth line, we get
\begin{equs}
{}&\bigg\| \Big\| \sup_{\eta\in\BB^r_{\beta-1}} \frac{\big| \sum_{n<n_0} \sum_{y\in\Lambda_n} a^{n,\psi}_y \langle \psi_y^n , \eta^\lambda_x \rangle \big|}{\lambda^\alpha} \Big\|_{L^p} \bigg\|_{L^q}\\
&\lesssim\bigg\| \sum_{n<n_0} \Big\| \sum_{y\in\Lambda_n} \sup_{\eta\in\BB^r_{\beta-1}} \frac{\big|   a^{n,\psi}_y \langle \psi_y^n , \eta^\lambda_x \rangle \big|}{\lambda^\alpha} \Big\|_{L^p} \bigg\|_{L^q}\\
&\lesssim \bigg\| \sum_{n<n_0} \bigg( \int_{x\in\R^d} \bigg( \sum_{y\in\Lambda_n: \|x-y\|_{\s} \leq \lambda + M 2^{-n}} \frac{\big|a^{n,\psi}_y \big|}{\lambda^\alpha} 2^{n(\frac{|\s|}{2}+\beta)} \lambda^{\beta} \bigg)^p dx \bigg)^{\frac1{p}}\bigg\|_{L^q}\\
&\lesssim \bigg\| \sum_{n<n_0} 2^{n(\beta-\alpha)}\lambda^{\beta-\alpha} \Big( \sum_{y\in\Lambda_n} 2^{-n|\s|} \Big(\frac{\big|a^{n,\psi}_y \big|}{2^{-n(\frac{|\s|}{2}+\alpha)}}\Big)^p \Big)^{\frac{1}{p}} \bigg\|_{L^q}\;.
\end{equs}
At this point, we observe that $\sum_{n<n_0} 2^{n(\beta-\alpha)}\lambda^{\beta-\alpha}$ is of order $1$, uniformly over all $\lambda\in (0,1]$. Consequently, Jensen's inequality and a simple integration over $\lambda$ ensure that the last expression is bounded by a term of order
\begin{equ}
\bigg\|\Big\| \frac{\big|a^{n,\psi}_x \big|}{2^{-n(\frac{|\s|}{2}+\alpha)}}\Big\|_{\ell^p_n} \bigg\|_{\ell^q}\;,
\end{equ}
which is finite by (\ref{Eq:CondWave}).

We consider the case where $n\geq n_0$. Using the Taylor expansion of $\eta$ at $y$, we deduce that $|\langle \psi_y^n , \eta^\lambda_x \rangle| \lesssim 2^{-n(\frac{|\s|}{2} + r)}\lambda^{-|\s|-r}$ uniformly over all $x,y \in \R^d$, all $\lambda\in (0,1]$, all $\eta\in\BB^r$ and all $n\geq n_0$. Furthermore, the inner product vanishes as soon as $\|x-y\|_{\s} > \lambda + M 2^{-n}$ so that, for any given $x\in\R^d$, there are of the order of $2^{(n-n_0)|\s|}$ terms with a non-zero contribution in the sum over $y\in\Lambda_n$, uniformly over all the parameters. We first assume that $\alpha \geq 0$ and take $\lambda = 1$ (so $n_0=0$), in order to obtain the first bound of (\ref{Eq:BoundDefBesovPos}). Using the triangle inequality on the sum over $n$ at the first line, Jensen's inequality on the sum over $y$ at the second line, and the H\"older inequality at the third line, we get
\begin{equs}
\Big\| \sup_{\eta\in\BB^r} \Big| \sum_{n\geq 0} \sum_{y\in\Lambda_n} a^{n,\psi}_y \langle \psi_y^n&, \eta_x \rangle \Big| \Big\|_{L^p} \lesssim \sum_{n\geq 0} \Big\| \!\!\!\!\!\sum_{\substack{y\in\Lambda_n\\|x-y|_{\s} \leq 1 + M 2^{-n}}}\!\!\!\!\! |a^{n,\psi}_y| 2^{-n(\frac{|\s|}{2} + r)} \Big\|_{L^p}\\
&\lesssim \sum_{n\geq 0} 2^{-n(r+\alpha)} \Big( \sum_{y\in\Lambda_n} 2^{-n|\s|} \Big(\frac{\big|a^{n,\psi}_y \big|}{2^{-n(\frac{|\s|}{2}+\alpha)}}\Big)^p \Big)^{\frac{1}{p}}\\
&\lesssim \bigg\| \Big\| \frac{\big|a^{n,\psi}_x \big|}{2^{-n(\frac{|\s|}{2}+\alpha)}}\Big\|_{\ell^p_n} \bigg\|_{\ell^q} \Big(\sum_{n\geq 0} 2^{-n\frac{q}{q-1}(r+\alpha)}\Big)^{1-\frac{1}{q}}\;,
\end{equs}
where $\frac{q}{q-1}$ is set to $+\infty$ when $q=1$, and to $1$ when $q=+\infty$. Since $r+\alpha > 0$, this is finite.

We now consider any $\alpha$ and do no longer impose $\lambda =1$. Using the triangle inequality at the second line, and Jensen's inequality at the third line, we get
\begin{equs}
{}&\bigg\| \Big\| \sup_{\eta\in\BB^r} \frac{\big| \sum_{n\geq n_0} \sum_{y\in\Lambda_n} a^{n,\psi}_y \langle \psi_y^n , \eta^\lambda_x \rangle \big|}{\lambda^\alpha} \Big\|_{L^p} \bigg\|_{L^q}\\
&\qquad\lesssim \bigg\| \sum_{n\geq n_0} \Big\| \sum_{y\in\Lambda_n: \|x-y\|_{\s} \leq \lambda + M 2^{-n}} \frac{\big|a^{n,\psi}_y \big|}{\lambda^\alpha} 2^{-n(\frac{|\s|}{2} + r)}\lambda^{-|\s|-r} \Big\|_{L^p}\bigg\|_{L^q}\\
&\qquad\lesssim \bigg\| \sum_{n\geq n_0} 2^{-(n-n_0)(r+\alpha)} \Big( \sum_{y\in\Lambda_n} 2^{-n|\s|} \Big(\frac{\big|a^{n,\psi}_y \big|}{2^{-n(\frac{|\s|}{2}+\alpha)}}\Big)^p \Big)^{\frac{1}{p}} \bigg\|_{L^q}\;.
\end{equs}
Since $\sum_{n\geq n_0} 2^{-(n-n_0)(r+\alpha)}$ is of order $1$, Jensen's inequality ensures that the last expression is bounded by a term of order
\begin{equ}
 \bigg(\sum_{n\geq 0} 2^{-n(r+\alpha)}\int_{\lambda \in (2^{-n},1]}\frac{d\lambda}{\lambda^{r+\alpha+1}}  \Big\|\frac{\big|a^{n,\psi}_x\big|}{2^{-n(\frac{|\s|}{2}+\alpha)}}\Big\|_{\ell^p_n}^q \bigg)^{\frac{1}{q}}
\lesssim \bigg\|  \Big\|\frac{\big|a^{n,\psi}_x \big|}{2^{-n(\frac{|\s|}{2}+\alpha)}} \Big\|_{\ell^p_n}\bigg\|_{\ell^q}\;,
\end{equ}
which is finite by (\ref{Eq:CondWave}). This concludes the proof.
\end{proof}

We conclude this subsection with an elementary property.

\begin{lemma}\label{Lemma:BesovFunction}
Let $\xi \in \cB^\alpha_{p,q}$ for some $\alpha > 0$. Let $\rho:\R^d\rightarrow\R$ be a smooth, even function supported in $B(0,1)$ and integrating to $1$. Then $\big\{x \mapsto \langle \xi,\rho^\lambda_x\rangle, x\in\R^d\}_{\lambda\in (0,1]}$, is a Cauchy family in $L^p$ for $\lambda \downarrow 0$ and its limit coincides with the distribution $\xi$.
\end{lemma}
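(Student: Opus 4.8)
The plan is the following. Set $g_\lambda(x):=\langle\xi,\rho^\lambda_x\rangle$; this is well defined and smooth in $x$ since $\rho^\lambda_x\in\cD(\R^d)$ and $\xi\in\cD'(\R^d)$. I would first show that $(g_\lambda)_{\lambda\in(0,1]}$ is Cauchy in $L^p$ as $\lambda\downarrow0$ by telescoping along dyadic scales, and then identify the limit with $\xi$ by a mollification argument. The one quantitative ingredient I would isolate at the start is a decay bound for testing $\xi$ against mean-zero bumps. Fix $r\in\N$ with $r>\alpha$ and pick $\beta\in(0,\alpha)$ with $\beta<1$ (possible since $\alpha>0$). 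From the wavelet characterisation of Proposition~\ref{PropCharact} one reads off at once the continuous embedding $\cB^\alpha_{p,q}\subset\cB^\beta_{p,\infty}$ (at level $n$ the wavelet coefficients acquire the extra factor $2^{-n(\alpha-\beta)}\to0$, and $\ell^q\subset\ell^\infty$), and since $\lfloor\beta\rfloor=0$ the definition of $\cB^\beta_{p,\infty}$ then gives
\begin{equ}
\Big\|\sup_{\eta\in\BB^r_0(\R^d)}\big|\langle\xi,\eta^\lambda_x\rangle\big|\Big\|_{L^p}\lesssim\lambda^{\beta}\|\xi\|_{\cB^\beta_{p,\infty}}\;,
\end{equ}
uniformly over $\lambda\in(0,1]$, while the first bound of \eqref{Eq:BoundDefBesovPos} gives $\big\|\sup_{\eta\in\BB^r}|\langle\xi,\eta_x\rangle|\big\|_{L^p}<\infty$.

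For the Cauchy estimate I would use that $f\mapsto f^\mu$ is linear and that $\int\rho^\mu=1$ for every $\mu\in(0,1]$, so that any difference of rescaled copies of $\rho$ has vanishing integral. Concretely, with $\zeta:=\rho-\rho^{1/2}$ (where $\rho^{1/2}$ denotes $\rho$ rescaled by $1/2$), a change of variables gives $\rho^{2^{-n}}-\rho^{2^{-n-1}}=\zeta^{2^{-n}}$ for all $n\geq0$; the function $\zeta$ is smooth, supported in $B(0,1)$ and integrates to $0$, hence is a bounded multiple of an element of $\BB^r_0$, so the displayed bound yields $\|g_{2^{-n}}-g_{2^{-n-1}}\|_{L^p}\lesssim2^{-n\beta}$. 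Since $g_1\in L^p$ (because $\rho$ is a multiple of an element of $\BB^r$), summing the geometric series shows that $g_{2^{-n}}\in L^p$ for every $n$ and that $(g_{2^{-n}})_n$ converges in $L^p$. For an arbitrary $\lambda\in(0,1]$, writing $n$ for the integer with $2^{-n-1}<\lambda\leq2^{-n}$, a change of variables gives $\rho^\lambda=\tilde\rho^{2^{-n}}$ with $\tilde\rho$ a dilation of $\rho$ by a factor comprised between $1$ and $2$; such $\tilde\rho$ are supported in $B(0,1)$, of unit integral, and uniformly bounded in $\cC^r$. Then $g_\lambda-g_{2^{-n}}=\langle\xi,(\tilde\rho-\rho)^{2^{-n}}_x\rangle$ with $\tilde\rho-\rho$ lying in a fixed bounded multiple of $\BB^r_0$, so $\|g_\lambda-g_{2^{-n}}\|_{L^p}\lesssim2^{-n\beta}\to0$ as $\lambda\downarrow0$. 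Combining these, $(g_\lambda)_{\lambda}$ is Cauchy in $L^p$; let $g\in L^p$ be its limit.

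It remains to identify $g$ with $\xi$. For $\phi\in\cD(\R^d)$, the map $x\mapsto\phi(x)\,\rho^\lambda_x$ is continuous and compactly supported as a $\cD(\R^d)$-valued map, so by linearity and continuity of $\xi$ one obtains $\int g_\lambda(x)\,\phi(x)\,dx=\langle\xi,\int\phi(x)\,\rho^\lambda_x\,dx\rangle=\langle\xi,\rho^\lambda*\phi\rangle$, using $\rho^\lambda_x(y)=\rho^\lambda(y-x)$. As $\lambda\downarrow0$ one has $\rho^\lambda*\phi\to\phi$ in $\cD(\R^d)$ (standard mollification; the anisotropic rescaling causes no difficulty), hence the right-hand side tends to $\langle\xi,\phi\rangle$, while the left-hand side tends to $\int g\,\phi\,dx$ because $g_\lambda\to g$ in $L^p$ and $\phi\in L^{p'}$. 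Thus $\langle g,\phi\rangle=\langle\xi,\phi\rangle$ for every test function $\phi$, i.e.\ $g=\xi$ as distributions.

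I do not expect a genuinely difficult step here: the only quantitative input is the $\lambda^\beta$-decay bound above, which is essentially free from the wavelet characterisation, and everything else is bookkeeping. The two points requiring some care are the anisotropic rescaling identities $\rho^{2^{-n}}-\rho^{2^{-n-1}}=\zeta^{2^{-n}}$ and $\rho^\lambda=\tilde\rho^{2^{-n}}$ — one must check that $\tilde\rho$ really is a bounded-factor dilation of $\rho$ so that the relevant $\cC^r$-norms stay uniformly bounded, which is precisely why dyadic telescoping, rather than a direct comparison of $\rho^\lambda$ and $\rho^\mu$, is used — and the justification for interchanging $\xi$ with the $x$-integration, which rests on the continuity of $x\mapsto\phi(x)\rho^\lambda_x$ as a $\cD(\R^d)$-valued map.
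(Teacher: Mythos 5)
Your proof is correct and takes essentially the same approach as the paper for the Cauchy step: both telescope along dyadic scales (your identity $\rho^{2^{-n}}-\rho^{2^{-n-1}}=\zeta^{2^{-n}}$ with $\zeta$ a bounded multiple of an element of $\BB^r_0$ matches the paper's decomposition~\eqref{Eq:lambda}), and both feed this into the decay bound coming from $\xi\in\cB^\beta_{p,\infty}$ for some $\beta\in(0,1\wedge\alpha)$.

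The identification of the limit is where you deviate, and your route is actually cleaner. The paper introduces $\eta_\lambda=\rho^\lambda*\eta$ and splits $\langle\tilde\xi-\xi,\eta_\lambda\rangle=\langle\tilde\xi-\tilde\xi^\lambda,\eta_\lambda\rangle+\langle\tilde\xi^\lambda-\xi,\eta_\lambda\rangle$, bounding the first by $L^p$--$L^{p'}$ duality and the second by rewriting it as a double integral of $\langle\xi,\rho^\lambda_y-\rho^\lambda_x\rangle$ and invoking the Besov regularity of $\xi$ again. You instead use the adjoint identity $\int g_\lambda\phi\,dx=\langle\xi,\rho^\lambda*\phi\rangle$ directly and pass to the limit on both sides: the left side converges by the already-established $L^p$ convergence of $g_\lambda$, the right by the standard mollification fact $\rho^\lambda*\phi\to\phi$ in $\cD(\R^d)$. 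This avoids the second Besov estimate entirely. Both arguments are correct; yours is the more economical, the paper's has the small virtue of never invoking convergence in $\cD(\R^d)$ for anything more than the observation at the very end. One cosmetic remark: you correctly flag that the interchange $\int g_\lambda\phi\,dx=\langle\xi,\int\phi(x)\rho^\lambda_x\,dx\rangle$ needs the map $x\mapsto\phi(x)\rho^\lambda_x$ to be continuous and compactly supported into $\cD(\R^d)$; this is indeed the only justification required, and it is the same interchange the paper implicitly performs when computing $\langle\tilde\xi^\lambda,\eta_\lambda\rangle$.
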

\begin{proof}
Set $\tilde{\xi}^\lambda(x) := \langle \xi,\rho^\lambda_x\rangle$. For all $\lambda > \lambda' \in (0,1]$, we can write
\begin{equ}\label{Eq:lambda}
\rho^\lambda_x-\rho^{\lambda'}_x = \big(\rho^\lambda_x-\rho^{2^{-n_0}}_x\big) + \sum_{n=n_0}^{n_1-1} \big(\rho^{2^{-n}}_x-\rho^{2^{-(n+1)}}_x\big) + \big(\rho^{2^{-n_1}}_x-\rho^{\lambda'}_x\big)\;,
\end{equ}
where $n_0, n_1$ are the largest integers such that $2^{-n_0}\geq \lambda$ and $2^{-n_1}\geq \lambda'$. By the classical embeddings, $\xi$ belongs to $\cB^\epsilon_{p,\infty}$ for any $\epsilon \in (0,1\wedge \alpha)$. Since every term inside brackets is a smooth function integrating to $0$, it is simple to check using (\ref{Eq:lambda}) that the family $\{\tilde{\xi}^\lambda\}_{\lambda\in (0,1]}$ is Cauchy in $L^p$. Let $\tilde{\xi}$ be its limit: it naturally defines a distribution on $\R^d$. Let $\eta$ be a compactly supported, smooth function on $\R^d$. We set
\begin{equ}
\eta_\lambda(y) = \int \rho^\lambda(y-x)\eta(x)dx\;.
\end{equ}
Observe that $\eta_\lambda$ is supported in a compact set whose diameter is of order $1$, uniformly over all $\lambda\in(0,1]$. We write
\begin{equ}\label{Eq:Decompxitilde}
\langle \tilde{\xi}-\xi , \eta_\lambda\rangle = \langle \tilde{\xi} - \tilde{\xi}^\lambda  , \eta_\lambda\rangle + \langle \tilde{\xi}^\lambda - \xi,\eta_\lambda\rangle\;,
\end{equ}
and we show that each term on the right vanishes as $\lambda\downarrow 0$. Indeed, using H\"older's inequality and the fact that $\eta^\lambda$ is a compactly supported, smooth function, we find
\begin{equ}
\big| \langle \tilde{\xi} - \tilde{\xi}^\lambda , \eta_\lambda \rangle \big| \lesssim \int_y |\eta_\lambda(y)| \big|\tilde{\xi}(y) - \tilde{\xi}^\lambda(y) \big| dy
\lesssim \Big\| \tilde{\xi} - \tilde{\xi}^\lambda \Big\|_{L^p} \;,
\end{equ}
so that it vanishes as $\lambda\downarrow 0$. On the other hand, we can rewrite the second term on the right hand side of (\ref{Eq:Decompxitilde}) as follows:
\begin{equ}
\langle \tilde{\xi}^\lambda - \xi,\eta_\lambda\rangle = \int_z \eta(x) \int_y \rho^\lambda_x(y) \langle \xi, \rho^\lambda_y - \rho^\lambda_x\rangle dy\,dx\;.
\end{equ}
Since $\rho^\lambda_y - \rho^\lambda_x$ integrates to $0$ and since $\xi$ belongs to $\cB^\epsilon_{p,\infty}$ for some $\epsilon >0$, we conclude that this last term vanishes as $\lambda\downarrow 0$. We have proven that $\langle \tilde{\xi}-\xi , \eta_\lambda\rangle$ goes to $0$ as $\lambda\downarrow 0$. Since $\eta_\lambda$ converges to $\eta$ in the topology on $\cD(\R^d)$ (that we introduced at the beginning of the section), we deduce that $\langle \tilde{\xi}-\xi , \eta\rangle=0$ thus concluding the proof.
\end{proof}

\subsection{Regularity structures}

Recall that a regularity structure is a triple $(\cA,\cT,\cG)$ where:\begin{enumerate}
\item $\cA$, the set of \textit{homogeneities}, is a subset of $\R$ assumed to be locally finite and bounded from below,
\item $\cT$, the \textit{model space}, is a graded vector space $\bigoplus_{\zeta\in\cA} \cT_\zeta$ consisting of finite sequences indexed by $\cA$, and each $\cT_\zeta$ is a Banach space,
\item $\cG$, the \textit{structure group}, is a group of continuous linear transformations on $\cT$ such that for every $\Gamma\in\cG$, every $\zeta \in\cA$ and every $\tau\in\cT_\zeta$, we have $\Gamma \tau - \tau \in \cT_{<\zeta}$ where $\cT_{<\zeta} = \bigoplus_{\beta < \zeta} \cT_\beta$.
\end{enumerate}
An elementary example of regularity structures is the \textit{polynomial} regularity structure $(\bar\cA,\bar\cT,\bar\cG)$ defined as follows. Take $\bar\cA = \N$, and for every $\zeta\in\N$, let $\cT_\zeta$ be the set of all polynomials in $X_i, i=1 \ldots d$ with $\s$-scaled degree equal to $\zeta$. Recall that the $\s$-scaled degree of $X^k=\prod_{i=1}^d X_i^{k_i}$ is given by $|k|_\s = \sum_i \s_i k_i$, for any $k\in\N^d$. Furthermore, the structure group $\bar\cG$ is taken to be the group of translations on $\R^d$ acting on polynomials in the usual way.

We will denote by $\cQ_\zeta$ or $(\cdot)_\zeta$ the projection from $\cT$ onto $\cT_\zeta$, and $|\tau|_\zeta$ will denote the norm of the projection of $\tau$ onto $\cT_\zeta$ for all $\tau\in\cT$.
Given a regularity structure $(\cA,\cT,\cG)$, recall the notion of \textit{model} that endows every element in the structure with some analytical features. From now on, we let $r\in\N$ be such that $r> |\zeta|$ for all $\zeta\in\cA_{\gamma}:=\cA\cap (-\infty,\gamma)$ for some fixed $\gamma > 0$.

\begin{definition}
A model is a pair $(\Pi,\Gamma)$ that satisfies the following conditions. First of all, $\Pi =(\Pi_x)_{x\in\R^d}$ is a collection, indexed by $\R^d$, of linear maps from $\cT_{<\gamma}$ into the set of distributions $\cD'(\R^d)$ such that
\begin{equ}
\| \Pi\|_x = \sup_{\eta \in\BB^r} \sup_{\lambda\in (0,1]} \sup_{\zeta \in\cA_{\gamma}} \sup_{\tau \in \cT_\zeta} \frac{\big|\langle \Pi_x \tau , \eta^\lambda_x\rangle \big|}{|\tau| \lambda^\zeta} \lesssim 1\;,
\end{equ}
uniformly over all $x\in\R^d$. Second $\Gamma = (\Gamma_{x,y})_{x,y\in\R^d}$ where every $\Gamma_{x,y}$ belongs to the structure group $\cG$ and we have
\begin{equ}
\| \Gamma \|_{x,y} = \sup_{\beta \leq \zeta \in \cA_{\gamma}} \sup_{\tau\in \cT_\zeta} \frac{\big| \Gamma_{x,y} \tau \big|_{\beta}}{|\tau| \|x-y\|_\s^{\zeta-\beta}} \lesssim 1\;,
\end{equ}
uniformly over all $x\in\R^d$ and all $y\in B(x,1)$. We also set $\|\Pi\|:=\sup_x \| \Pi\|_x$ and $\|\Gamma\| := \sup_{x,y} \| \Gamma \|_{x,y}$.
\end{definition}

\begin{remark}
Unlike in \cite{Hairer2014}, we assume here that the bounds on $\Pi$ and $\Gamma$ hold uniformly over $x \in \R^d$.
This is required since Besov spaces measure not only the local properties of a function but also its global integrability.
It would of course be possible to adapt the results of this article to build analogues of weighted or 
local Besov spaces in which some non-uniformity in these bounds is allowed, we refer to~\cite{DavidJosef} for instance.
\end{remark}

\subsection{Modelled distributions}

Given a regularity structure $(\cA,\cT,\cG)$, and a model $(\Pi,\Gamma)$, we introduce some spaces of \textit{modelled distributions} that mimic the spaces $\cB^\alpha_{p,q}$ in the framework of regularity structures. Recall the notation $L^p$ introduced earlier in the paper.
We also henceforth write $\cA_\gamma = \cA \cap (-\infty,\gamma)$.

\begin{definition}\label{Def:Dgamma}
For $\gamma \in \R$, let $\cD^\gamma_{p,q}$ be the Banach space of all measurable maps $f: \R^d \rightarrow \cT_{<\gamma}$ such that, for all $\zeta\in\cA_\gamma$, we have:\begin{enumerate}
\item Local bound:
\begin{equ}
\Big\| \big| f(x) \big|_\zeta \Big\|_{L^p} < \infty\;,
\end{equ}
\item Translation bound:
\begin{equ}
\bigg(\int_{h\in B(0,1)}\bigg\| \frac{\big| f(x+h)-\Gamma_{x+h,x} f(x) \big|_\zeta}{\|h\|_\s^{\gamma-\zeta}} \bigg\|^q_{L^p} \frac{dh}{\|h\|_\s^{|\s|}}\bigg)^{\frac1{q}} < \infty\;.
\end{equ}
\end{enumerate}
We write $\$f\$$ for the corresponding norm.
\end{definition}

This definition is close to the definition of classical Besov spaces via differences, 
see for instance \cite[Sec.~2.5.12]{Triebel}.
Note also that in the particular case $q = p$, this definition coincides with the definition
of the spaces $\CD^\gamma_p(\R^d)$ given in \cite{DavidJosef}.
The main trick for proving the embedding theorems for the spaces $\cD^\gamma_{p,q}$ is to work at the level of averages over balls of radius $2^{-n}$. We define $\cE_n := B(0,2^{-n}) \cap \Lambda_n \backslash\{0\}$, that is
\begin{equ}
\cE_n=\big\{h\in\R^d: 2^{n\s_i} h_i \in \{-1,0,1\}\big\} \backslash \{0\}\;.
\end{equ}

\begin{definition}\label{Def:Dgammabar}
For $\gamma \in \R$, let $\bar{\cD}^\gamma_{p,q}$ be the Banach space of all sequences, indexed by $n\geq 0$, of maps $\aver{f}{n}: \Lambda_n \rightarrow \cT_{<\gamma}$ such that for all $\zeta\in\cA_\gamma$, we have:\begin{enumerate}
\item Local bound:
\begin{equ}
\Big\| \big| \aver{f}{0} (x) \big|_\zeta \Big\|_{\ell^p_0} < \infty\;,
\end{equ}
\item Translation bound:
\begin{equ}
\bigg(\sum_{n\geq 0} \sum_{h\in \cE_n} \bigg\| \frac{\big| \aver{f}{n}(x+h)-\Gamma_{x+h,x} \aver{f}{n}(x) \big|_\zeta}{2^{-n(\gamma-\zeta)}} \bigg\|^q_{\ell^p_n} \bigg)^{\frac1{q}} < \infty\;,
\end{equ}
\item Consistency bound:
\begin{equ}
\bigg(\sum_{n\geq 0} \bigg\| \frac{\big|\aver{f}{n} (x) - \aver{f}{n+1}(x)\big|_\zeta}{2^{-n(\gamma-\zeta)}}\bigg\|_{\ell^p_n}^q \bigg)^{\frac1{q}} < \infty\;.
\end{equ}
\end{enumerate}
\end{definition}
As for the $\cD^\gamma$-norm, we use the notation $\$\bar{f}\$$ for the $\bar{\cD}^\gamma$-norm: this will never raise any confusion in the sequel.

\begin{remark}\label{Rmk:Consistency}
Let $\cE^C_{n} = B(0,C2^{-n}) \cap \Lambda_n \backslash\{0\}$ for some constant $C>0$. Combining the translation and consistency bounds, we get
\begin{equ}
\bigg(\sum_{n\geq 0} \sum_{h\in \cE^C_{n+1}} \bigg\| \frac{\big|\aver{f}{n} (x) - \Gamma_{x,x+h} \aver{f}{n+1}(x+h)\big|_\zeta}{2^{-n(\gamma-\zeta)}}\bigg\|_{\ell^p_n} \bigg)^{\frac1{q}} \lesssim \$\bar{f}\$\;.
\end{equ}
\end{remark}

\begin{notation}
We will write $f_\zeta(x)$ and $\aver{f}{n}_\zeta(x)$ as shortcuts for $\cQ_\zeta f(x)$ and $\cQ_\zeta \aver{f}{n}(x)$. We will also write $|f(x)|_\zeta$ and $|\aver{f}{n}(x)|_\zeta$ for $|f_\zeta(x)|$ and $|\aver{f}{n}_\zeta(x)|$.
\end{notation}

One should think of $\aver{f}{n}(x)$ as being a suitable average of some function $f$ over a ball 
of radius $2^{-n}$ centred at $x$. This will be made more precise in Theorem~\ref{Th:EqDgamma} below.
We first show that, although the local bound is imposed for averages over balls of 
radius $1$ only, the consistency and translation bounds allow one to propagate this 
bound to averages over balls of arbitrarily small radius.

\begin{lemma}\label{Lemma:LocalBoundn}
Let $\bar{f}\in\bar{\cD}^{\gamma}_{p,q}$. Then for all $\zeta \in \cA_\gamma$, we have
\begin{equ}
\sup_{n\geq 0} \Big\| \big| \aver{f}{n} (x) \big|_\zeta \Big\|_{\ell^p_n} < \infty\;.
\end{equ}
\end{lemma}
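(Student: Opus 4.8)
The plan is to control $\||\aver{f}{n}(x)|_\zeta\|_{\ell^p_n}$ by telescoping from scale $0$ to scale $n$ using the consistency bound, but handling the action of $\Gamma$ carefully since $\aver{f}{n}$ lives on the grid $\Lambda_n$ while $\aver{f}{n+1}$ lives on the finer grid $\Lambda_{n+1}$. First I would note that for a point $x\in\Lambda_n$, one has $x\in\Lambda_{n+1}$ as well (since $\Lambda_n\subset\Lambda_{n+1}$), so the consistency bound directly compares $\aver{f}{n}(x)$ and $\aver{f}{n+1}(x)$ at the same point. Thus for fixed $x\in\Lambda_n$ we can write
\begin{equ}
\aver{f}{n}(x) = \aver{f}{0}(x) + \sum_{m=0}^{n-1}\big(\aver{f}{m+1}(x)-\aver{f}{m}(x)\big)\;,
\end{equ}
valid since $x\in\Lambda_0\subset\Lambda_m$ for all $m\le n$. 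Wait — this is false: $x\in\Lambda_n$ need not lie in $\Lambda_0$. So the telescoping must go the other way, or I must be more careful.

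The correct approach: fix $n\ge 1$ and $x\in\Lambda_n$. There is a nearest point $x_0\in\Lambda_0$ with $\|x-x_0\|_\s\lesssim 1$, and more generally a chain $x=x_n, x_{n-1},\dots,x_0$ with $x_m\in\Lambda_m$, $x_m\in B(x_{m+1},C2^{-m-1})$ for a fixed geometric constant $C$. Then write
\begin{equ}
\aver{f}{n}(x_n) = \Gamma_{x_n,x_0}\aver{f}{0}(x_0) + \sum_{m=0}^{n-1}\Big(\aver{f}{m+1}(x_{m+1}) - \Gamma_{x_{m+1},x_m}\aver{f}{m}(x_m)\Big)\;,
\end{equ}
using a discrete version of the chain rule $\Gamma_{x_n,x_0}=\Gamma_{x_n,x_{n-1}}\cdots\Gamma_{x_1,x_0}$ and the fact that each $\Gamma$ preserves the filtration. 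Projecting onto $\cT_\zeta$, using the model bound $\|\Gamma\|\lesssim 1$ to control $|\Gamma_{x_n,x_0}\aver{f}{0}(x_0)|_\zeta \lesssim \sum_{\beta\ge\zeta}|\aver{f}{0}(x_0)|_\beta$ (with $\|x_n-x_0\|_\s\lesssim 1$), and similarly for the telescoped increments via Remark~\ref{Rmk:Consistency}, we get a pointwise bound. The increment $|\aver{f}{m+1}(x_{m+1})-\Gamma_{x_{m+1},x_m}\aver{f}{m}(x_m)|_\zeta$ is exactly the quantity appearing in Remark~\ref{Rmk:Consistency} with $h=x_{m+1}-x_m\in\cE^C_{m+1}$ (rescaled to $\Lambda_m$), so it is bounded by $2^{-m(\gamma-\zeta)}$ times a summable-in-$\ell^p_m$ quantity.

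Then I would take the $\ell^p_n$ norm over $x\in\Lambda_n$. The key point is a change of variables / Jensen-type argument: summing over $x\in\Lambda_n$ with weight $2^{-n|\s|}$ and regrouping by the coarser index $x_m$, each point $x_m\in\Lambda_m$ is hit by $O(2^{(n-m)|\s|})$ points $x\in\Lambda_n$ whose chain passes through it, and the weight $2^{-n|\s|}$ times this multiplicity gives $2^{-m|\s|}$, so that the $\ell^p_n$ norm of the $m$-th telescoped term is bounded (using the triangle inequality in $\ell^p$ and then in the sum over $m$) by $\sum_{m=0}^{n-1}$ of the $\ell^p_m$ norm at scale $m$, which by the consistency bound (via Hölder in $m$ if $q>1$, or directly if the bound is on the $\ell^q$ norm of these) is finite uniformly in $n$. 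Since $\gamma-\zeta>0$ the geometric factors $2^{-m(\gamma-\zeta)}$ ensure the sum over $m$ converges even when $q=\infty$. The leading term $\|\Gamma_{x_n,\cdot}\aver{f}{0}\|_{\ell^p_n}$ is handled the same way and is bounded by $\||\aver{f}{0}(x)|_\beta\|_{\ell^p_0}$ summed over $\beta\in\cA_\gamma$, finite by the local bound.

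The main obstacle I expect is the bookkeeping in the change-of-variables step: one must choose the chains $x_m$ in a measurable/consistent way (e.g. by rounding coordinates), verify that the increments $x_{m+1}-x_m$ stay within a fixed ball $B(0,C2^{-m})$ so that Remark~\ref{Rmk:Consistency} applies with a fixed $C$, and correctly track how the $2^{-n|\s|}$ weights and multiplicities interact when passing from $\ell^p_n$ to $\ell^p_m$. A secondary subtlety is the discrete chain rule for the $\Gamma$'s together with the quasi-triangle-inequality-type estimate $|\Gamma_{x,z}\tau|_\zeta \lesssim \sum_{\beta\ge\zeta}\|x-z\|_\s^{\beta-\zeta}|\tau|_\beta$ applied along the chain — one should verify this composes without losing constants over the (growing number of) links, which works precisely because the spatial increments shrink geometrically.
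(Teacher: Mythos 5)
Your chain-of-grid-points approach is viable and genuinely different from the paper's, but the telescoping identity you write is incorrect. Telescoping the sequence $g_m := \Gamma_{x_n,x_m}\aver{f}{m}(x_m)$ (which uses the cocycle property $\Gamma_{x_n,x_m} = \Gamma_{x_n,x_{m+1}}\Gamma_{x_{m+1},x_m}$) gives
\begin{equ}
\aver{f}{n}(x_n) = \Gamma_{x_n,x_0}\aver{f}{0}(x_0) + \sum_{m=0}^{n-1}\Gamma_{x_n,x_{m+1}}\Big(\aver{f}{m+1}(x_{m+1}) - \Gamma_{x_{m+1},x_m}\aver{f}{m}(x_m)\Big)\;,
\end{equ}
so each increment carries a prefactor $\Gamma_{x_n,x_{m+1}}$ that your formula omits. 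This is not cosmetic: when you project onto $\cT_\zeta$ with $\zeta < \max\cA_\gamma$, that prefactor is what produces the factor $\sum_{\beta\geq\zeta}\|x_n - x_{m+1}\|_\s^{\beta-\zeta} \lesssim \sum_{\beta\geq\zeta}2^{-m(\beta-\zeta)}$, which combined with the $2^{-m(\gamma-\beta)}$ scaling from Remark~\ref{Rmk:Consistency} yields the geometric decay $2^{-m(\gamma-\zeta)}$ you need for the H\"older step. It also dispels your worry about constants accumulating along the chain: the model bound on $\|\Gamma\|$ is applied once per increment, to the single composite $\Gamma_{x_n,x_{m+1}}$ (with $\|x_n - x_{m+1}\|_\s \lesssim 2^{-m} \leq 1$), never link by link, so nothing accumulates.

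With the identity corrected, your scheme goes through; the counting/change-of-variables step from $\ell^p_n$ to $\ell^p_m$ is sound. The paper's proof avoids the chain bookkeeping entirely: it first reduces to $\zeta = \max\cA_\gamma$, where $\cQ_\zeta\Gamma_{x,y}\tau = \cQ_\zeta\tau$, so $\Gamma$ drops out of the increments altogether; it then proves a one-step recursion $\|\aver{f}{n+1}_\zeta\|_{\ell^p_{n+1}} \leq \|\aver{f}{n}_\zeta\|_{\ell^p_n} + A_2(n)$ by pairing each $y\in\Lambda_{n+1}$ with a nearby $x_y\in\Lambda_n$, and finally recurses downward through $\cA_\gamma$ by noting that once the top level is controlled, the restriction of $\bar f$ to $\cT_{<\tilde\gamma}$ lies in $\bar\cD^{\tilde\gamma}_{p,q}$. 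You unroll the full chain and handle all levels simultaneously via the $\Gamma$-prefactors, trading the downward recursion for the $2^{-m(\beta-\zeta)}$ bookkeeping; the paper's reduction to the top level is somewhat cleaner because it makes $\Gamma$ vanish rather than having to estimate it.
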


\begin{proof}
It is sufficient to prove the bound of the lemma with $\zeta$ taken to be the largest element in $\cA_\gamma$. Indeed, if we let $\beta$ be the second largest element in $\cA_\gamma$, then the bound on $\bar{f}_\zeta$ easily implies that the restriction of $\bar f$ to $\cT_{<\tilde{\gamma}}$ belongs to $\cD^{\tilde{\gamma}}_{p,q}$ for all $\tilde{\gamma} \in (\beta,\zeta)$ (one can take $\tilde{\gamma} = \zeta$ in the case where $q=\infty$). Consequently, the bound holds true also for $\beta$, and by recursion, for all levels in $\cA_\gamma$. 
We are left with the proof of the bound for $\zeta = \max \cA_\gamma$. The key argument is the following decomposition. For any $y\in\Lambda_{n+1}$, let $x_y :=\sup\{x\in\Lambda_n: x_i \leq y_i\;\forall i\}$ 
(here, the supremum refers to the lexicographic order) and write
\begin{equ}
\aver{f}{n+1}_\zeta(y) = \aver{f}{n}_\zeta(x_y) + \aver{f}{n+1}_\zeta(y)-\aver{f}{n}_\zeta(x_y)\;.
\end{equ}
By the triangle inequality, we have the bound $\|\aver{f}{n+1}_\zeta\|_{\ell^{p}_{n+1}} \leq A_1(n) + A_2(n)$ where
\begin{equs}
A_1(n) &= \Big(\sum_{y\in\Lambda_{n+1}} 2^{-(n+1)|\s|} \big|\aver{f}{n}_\zeta(x_y)\big|^{p} \Big)^{\frac1{p}}\;,\\
A_2(n) &= \bigg(\sum_{y\in\Lambda_{n+1}} 2^{-(n+1)|\s|} \Big| \aver{f}{n+1}_\zeta(y)-\aver{f}{n}_\zeta(x_y)\Big|^{p} \bigg)^{\frac1{p}}\;.
\end{equs}
We bound separately these two terms. A simple combinatorial argument ensures that
\begin{equ}
A_1(n) = \Big(\sum_{x\in\Lambda_{n}} 2^{-n|\s|}\big|\aver{f}{n}_\zeta(x)\big|^{p} \Big)^{\frac1{p}} = \big\|\aver{f}{n}_\zeta\big\|_{\ell^{p}_{n}}\;.
\end{equ}
We turn to $A_2$. There exists $C>0$ (independent of $f$) such that
\begin{equs}
A_2(n) &\lesssim \bigg(\sum_{x\in\Lambda_{n}} 2^{-n|\s|} \sum_{h\in\cE_{n+1}^C} \Big|\aver{f}{n+1}_\zeta(x+h)-\aver{f}{n}_\zeta(x)\Big|^{p} \bigg)^{\frac1{p}}\\
&\lesssim \sum_{h\in\cE_{n+1}^C}\bigg(\sum_{x\in\Lambda_{n}} 2^{-n|\s|} \Big|\aver{f}{n+1}_\zeta(x+h)-\aver{f}{n}_\zeta(x)\Big|^{p} \bigg)^{\frac1{p}}\;,
\end{equs}
uniformly over all $n\geq 0$. Since $\zeta = \max \cA_\gamma$, we have the identity $\cQ_\zeta \Gamma_{x,y} \tau = \cQ_\zeta\tau$ for all $\tau \in \cT_{<\gamma}$. By H\"older's inequality, this yields
\begin{equs}
\sum_{n\geq 0} A_2(n) &\lesssim \bigg(\sum_{n\geq 0} \sum_{h\in \cE_{n+1}^C} \bigg\| \frac{\big|\aver{f}{n} (x) - \Gamma_{x,x+h} \aver{f}{n+1}(x+h)\big|_\zeta}{2^{-n(\gamma-\zeta)}}\bigg\|_{\ell^p_n}^q \bigg)^{\frac1{q}}\\
&\quad\times\Big(\sum_{n\geq 0} 2^{-n(\gamma-\zeta)\bar q}\Big)^{\frac1{\bar q}}\;,
\end{equs}
where $\bar q \in [1,\infty]$ is the conjugate of $q$. Combining these bounds, we deduce that there exists $K>0$ such that
\begin{equ}\label{Eq:LocalBdn}
\|\aver{f}{n_0}_\zeta\|_{\ell^{p}_{n_0}} \leq \|\aver{f}{0}_\zeta\|_{\ell^{p}_{0}} + K \bigg(\sum_{n\geq 0} \sum_{h\in \cE_{n+1}^C} \bigg\| \frac{\big|\aver{f}{n} (x) -\Gamma_{x,x+h}\aver{f}{n+1}(x+h)\big|_\zeta}{2^{-n(\gamma-\zeta)}}\bigg\|_{\ell^p_n}^q \bigg)^{\frac1{q}}\;,
\end{equ}
uniformly over all $n_0\geq 0$. By Remark \ref{Rmk:Consistency}, this concludes the proof.
\end{proof}

The following result shows that the spaces $\cD$ and $\bar{\cD}$ are essentially equivalent. Recall that the notation $B(x,r)$ refers to the ball in $\R^d$, centred at $x$ and of radius $r$ for the scaled distance.

\begin{theorem}\label{Th:EqDgamma}
Let $f\in\cD^\gamma_{p,q}$, and set for all $n\geq 0$ and all $x\in\Lambda_n$
\begin{equ}\label{Eq:ffbar}
\aver{f}{n}(x) = \int_{B(x,2^{-n})} 2^{n|\s|} \Gamma_{x,y} f(y) dy\;.
\end{equ}
Then $\bar{f}$ belongs to $\bar{\cD}^{\gamma}_{p,q}$.

Conversely, let $\bar{f}\in\bar{\cD}^{\gamma}_{p,q}$ and for all $n\geq 0$ and all $x\in\R^d$, define $f_n(x) = \Gamma_{x,x_n} \aver{f}{n}(x_n)$ where $x_n$ is the nearest point to $x$ in $\Lambda_n$. 
Then,  the sequence $(f_n)_{n\geq 0}$ converges in $L^p$ to some limit $f \in \cD^{\gamma}_{p,q}$.

In the case where $\bar{f}$ is built from some $f\in\cD^\gamma_{p,q}$ as in the first part of the statement, then the element built in the second part of the statement coincides with $f$.
\end{theorem}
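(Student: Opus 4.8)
\emph{Overall strategy, and two elementary facts.} The plan is to prove the three assertions in turn; the common protagonist is the \emph{increment} $g_h(w):=f(w+h)-\Gamma_{w+h,w}f(w)$, whose norm is controlled by the $\cD^\gamma$- and $\bar{\cD}^\gamma$-bounds only after integration, never pointwise. Two facts will be used repeatedly: (a) the cubes $B(x,\epsilon 2^{-n})$, $x\in\Lambda_n$, are pairwise disjoint as soon as $\epsilon\le\tfrac12$, whereas the cubes $B(x,2^{-n})$, $x\in\Lambda_n$, cover $\R^d$ with overlap at most $2^d$; (b) $\sum_{n\ge0}2^{n|\s|}\mathbf{1}_{\{\|v\|_\s\le C2^{-n}\}}\asymp\|v\|_\s^{-|\s|}$, the geometric sum being dominated by its largest term. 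The algebraic relations $\Gamma_{x,y}\Gamma_{y,z}=\Gamma_{x,z}$ and the $\Gamma$-bound are used freely.

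\emph{From $\cD^\gamma_{p,q}$ to $\bar{\cD}^\gamma_{p,q}$.} The level-$0$ local bound is immediate from $|\aver{f}{0}(x)|_\zeta\lesssim\int_{B(x,1)}\sum_{\beta\ge\zeta}|f(y)|_\beta\,dy$ together with Jensen and the bounded overlap in (a). The translation and consistency bounds I would derive from a single computation. First, $\aver{f}{n}(x+h)-\Gamma_{x+h,x}\aver{f}{n}(x)=\int K(w)\,\Gamma_{x+h,w}f(w)\,dw$ where $K$ is a mean-zero combination of normalised indicators of the balls involved (for consistency one takes $h=0$ and $K$ the difference of the normalised indicators of $B(x,2^{-n})$ and $B(x,2^{-n-1})$). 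Writing $f(w)=\Gamma_{w,w'}f(w')+g_{w-w'}(w')$ for an arbitrary base point $w'$, the $\Gamma_{w,w'}f(w')$-contribution integrates to $0$ against $K$, leaving, for \emph{every} $w'$, the representation $\int K(w)\,\Gamma_{x+h,w}g_{w-w'}(w')\,dw$. Averaging $w'$ over $B(x,\epsilon2^{-n})$, bounding $|\Gamma_{x+h,w}g_{w-w'}(w')|_\zeta\lesssim\sum_{\beta\ge\zeta}2^{-n(\beta-\zeta)}|g_{w-w'}(w')|_\beta$ and making the shift $u=w-w'$ an independent integration variable, one reaches
\[
\big|\aver{f}{n}(x+h)-\Gamma_{x+h,x}\aver{f}{n}(x)\big|_\zeta\lesssim\sum_{\beta\ge\zeta}2^{-n(\beta-\zeta)}\,\frac{1}{|B(x,\epsilon2^{-n})|}\int_{B(x,\epsilon2^{-n})}\int_{B(0,C2^{-n})}2^{n|\s|}\,\big|g_u(w')\big|_\beta\,du\,dw'\,,
\]
uniformly over $h\in\cE_n$. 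Raising to the power $p$ (the integrating measure has mass $\asymp1$, so Jensen applies), summing $2^{-n|\s|}$ over $x\in\Lambda_n$ and invoking the disjointness in (a), the grid sum collapses to $\|\,|g_u(x)|_\beta\,\|_{L^p}^p$; dividing by $2^{-n(\gamma-\zeta)}$ and using $2^{n(\gamma-\beta)}\lesssim\|u\|_\s^{\beta-\gamma}$ on $B(0,C2^{-n})$ bounds the result by $\sum_{\beta\ge\zeta}\big(\int_{B(0,C2^{-n})}2^{n|\s|}\,\Psi_\beta(u)^p\,du\big)^{1/p}$, with $\Psi_\beta(u):=\|\,|g_u(x)|_\beta\,\|_{L^p}/\|u\|_\s^{\gamma-\beta}$. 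Finally one takes the $q$-th power, sums over $n$ and over the finitely many $h\in\cE_n$, uses $(\int 2^{n|\s|}\Psi^p)^{q/p}\lesssim\int 2^{n|\s|}\Psi^q$ on $B(0,C2^{-n})$ (Jensen if $q\ge p$, H\"older if $q<p$), and then (b) to manufacture the $\|u\|_\s^{-|\s|}$-weight: the outcome is bounded by the $\cD^\gamma$-translation bound (the region $\|u\|_\s\ge1$ absorbed into the local bound). This yields both bounds, and $\$\bar{f}\$\lesssim\$f\$$.

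\emph{From $\bar{\cD}^\gamma_{p,q}$ to $\cD^\gamma_{p,q}$, and compatibility.} By the composition law, $f_{n+1}(x)-f_n(x)=\Gamma_{x,x_n}\big(\Gamma_{x_n,x_{n+1}}\aver{f}{n+1}(x_{n+1})-\aver{f}{n}(x_n)\big)$; since $\|x-x_n\|_\s\lesssim2^{-n}$, $x_{n+1}-x_n\in\cE^C_{n+1}$ (or is $0$) and the fibres of $x\mapsto x_n$ are cubes of volume $\asymp2^{-n|\s|}$, the $\Gamma$-bound and Remark~\ref{Rmk:Consistency} give $\||f_{n+1}-f_n|_\zeta\|_{L^p}\lesssim2^{-n(\gamma-\zeta)}\sum_{\beta\ge\zeta}\sum_h e_n^{\beta,h}$ with $\sum_n\sum_h(e_n^{\beta,h})^q\lesssim\$\bar{f}\$^q$; as $\gamma-\zeta>0$, H\"older against the geometric sequence makes $\sum_n\|f_{n+1}-f_n\|_{L^p}$ converge, so $(f_n)$ is $L^p$-Cauchy. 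Call the limit $f$: it is measurable and obeys the local bound by Fatou and Lemma~\ref{Lemma:LocalBoundn}. For the translation bound, for $\|h\|_\s\asymp2^{-m}$ I would split $f(x+h)-\Gamma_{x+h,x}f(x)$ into $(f-f_m)(x+h)$, $\Gamma_{x+h,x}(f_m-f)(x)$, and the middle term $f_m(x+h)-\Gamma_{x+h,x}f_m(x)=\Gamma_{x+h,x_m}\big(\Gamma_{x_m,(x+h)_m}\aver{f}{m}((x+h)_m)-\aver{f}{m}(x_m)\big)$: after dividing by $\|h\|_\s^{\gamma-\zeta}$ the middle term is controlled at scale $m$ directly by Remark~\ref{Rmk:Consistency}, and the two tails, via the telescoping bound above, are $\lesssim\sum_{\beta,h}\sum_{k\ge m}2^{-(k-m)(\gamma-\beta)}e_k^{\beta,h}$; discretising $\int_{B(0,1)}(\cdots)^q\,\|h\|_\s^{-|\s|}\,dh$ over dyadic shells and applying Young's inequality (a geometric $\ell^1$-kernel against the $\ell^q$-sequence $e_k^{\beta,h}$) to the tails shows $f\in\cD^\gamma_{p,q}$ and $\$f\$\lesssim\$\bar{f}\$$. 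For compatibility, when $\bar{f}$ is built from $f\in\cD^\gamma_{p,q}$ via~\eqref{Eq:ffbar}, $f_n(x)$ is the average of $\Gamma_{x,\cdot}f(\cdot)$ over $B(x_n,2^{-n})$ and $\Gamma_{x,y}f(y)-f(x)=\Gamma_{x,y}g_{y-x}(x)$, so $|f_n(x)-f(x)|_\zeta\lesssim\sum_{\beta\ge\zeta}2^{-n(\beta-\zeta)}\tfrac1{|B(0,C2^{-n})|}\int_{B(0,C2^{-n})}|g_u(x)|_\beta\,du$; splitting into shells $\|u\|_\s\asymp2^{-k}$, $k\ge n$, and using Jensen and the $\cD^\gamma$-translation bound gives $\||f_n-f|_\zeta\|_{L^p}\lesssim2^{-n(\gamma-\zeta)}\to0$, hence $f_n\to f$ in $L^p$ and by uniqueness of limits the two constructions agree.

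\emph{Main obstacle.} The one genuinely delicate point is the transfer between the grid-indexed norms of $\bar{\cD}^\gamma$ and the continuous norms of $\cD^\gamma$: since the increments $g_h$ are controlled only after integration, no pointwise comparison of an $\ell^p_n$-sum (which samples on $\Lambda_n$), or of a sum over the grid shifts $\cE_n$, with a continuous integral is available a priori. This is circumvented by combining, in the first part, the averaging built into $\aver{f}{n}$ with the mean-zero-kernel identity and the change-of-base-point trick: together they rewrite the increment of $\aver{f}{n}$ in a form where averaging over a small cube about the grid point (plus disjointness of such cubes) replaces the grid sum by an honest $L^p$-integral, and the extra freedom in the shift variable produces the $\|u\|_\s^{-|\s|}$-weight through fact (b). Everything else is the routine bookkeeping of geometric sums in $n$ with the correct exponents (Jensen versus H\"older according as $q\ge p$ or $q<p$, and Young's inequality for the convolutions in the converse direction).
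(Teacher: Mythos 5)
Your converse construction and the compatibility argument track the paper's own proof closely (Cauchy in $L^p$ via telescoping, the three-term split of $f(x+h)-\Gamma_{x+h,x}f(x)$ using $f_{m}$ at the scale $\|h\|_\s\asymp 2^{-m}$, and H\"older/Young on the resulting geometric convolutions), and they are correct. The paper omits the forward direction as ``elementary,'' and your mean-zero-kernel device (writing the increment of $\aver{f}{n}$ against a zero-mass kernel $K$, inserting an arbitrary base point $w'$, and averaging $w'$ over a small ball of radius $\epsilon 2^{-n}$ so that disjointness converts the grid sum into an $L^p$ integral) is a genuinely useful contribution that addresses the main obstacle you identify. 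Up to one point, that argument is sound.

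The one point is a real gap in the exponent bookkeeping for $q<p$. After Jensen on the $p$-th power (applied jointly in $w'$ \emph{and} $u$) you arrive at the bound
\begin{equ}
\Big\| \tfrac{|\aver{f}{n}(x+h)-\Gamma_{x+h,x}\aver{f}{n}(x)|_\zeta}{2^{-n(\gamma-\zeta)}}\Big\|_{\ell^p_n}
\lesssim \sum_{\beta\ge\zeta}\Big(\int_{B(0,C2^{-n})}2^{n|\s|}\,\Psi_\beta(u)^p\,du\Big)^{1/p},
\end{equ}
and you then invoke $(\int 2^{n|\s|}\Psi^p\,du)^{q/p}\lesssim\int 2^{n|\s|}\Psi^q\,du$ ``by H\"older if $q<p$.'' But H\"older on a measure of total mass $\asymp 1$ gives precisely the \emph{opposite} inequality ($\int 2^{n|\s|}\Psi^q\lesssim(\int 2^{n|\s|}\Psi^p)^{q/p}$ when $q\le p$), and the inequality you need is in fact false: take $\Psi = a\,\mathbf{1}_A$ with $A$ a subset of the shell $\{\|u\|_\s\asymp 2^{-m}\}$ of relative measure $\delta<1$; then $\sum_n(\int 2^{n|\s|}\Psi^p)^{q/p}\asymp a^q\delta^{q/p}$ while $\int\Psi^q\|u\|_\s^{-|\s|}du\asymp a^q\delta$, and $\delta^{q/p}\gg\delta$ when $q<p$. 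So the intermediate quantity your Jensen step produces is strictly too large, and cannot be dominated by the $\cD^\gamma$-translation seminorm when $q<p$.

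The fix is small and does not disturb the rest of your argument: apply Jensen only in the $w'$-variable (to pass from the $\ell^p_n$-sum to the $L^p(dw')$-integral, using disjointness of the small balls), and then apply \emph{Minkowski's integral inequality} in the $u$-variable rather than absorbing $u$ into the Jensen step. Concretely, with $G_n^\beta(w') := \int_{B(0,C2^{-n})}2^{n|\s|}|g_u(w')|_\beta\,du$ you get $\|\cdots\|_{\ell^p_n}\lesssim\sum_\beta 2^{-n(\beta-\zeta)}\|G_n^\beta\|_{L^p}$, and Minkowski gives $\|G_n^\beta\|_{L^p}\le\int_{B(0,C2^{-n})}2^{n|\s|}\|\,|g_u(\cdot)|_\beta\,\|_{L^p}\,du$, hence
\begin{equ}
\frac{\|\cdots\|_{\ell^p_n}}{2^{-n(\gamma-\zeta)}}\lesssim\sum_{\beta\ge\zeta}\int_{B(0,C2^{-n})}2^{n|\s|}\,\Psi_\beta(u)\,du\,.
\end{equ}
Now $(\int_{B(0,C2^{-n})}2^{n|\s|}\Psi_\beta\,du)^q\lesssim\int_{B(0,C2^{-n})}2^{n|\s|}\Psi_\beta^q\,du$ is Jensen on the $q$-th power (valid for \emph{all} $q\ge1$, independently of $p$), and summing over $n$ with your fact (b) manufactures the $\|u\|_\s^{-|\s|}$-weight exactly as you intended. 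With this replacement the forward direction holds for every $p,q\in[1,\infty]$.
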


Let us observe that that our map $f\mapsto \bar{f}$ is far from being canonical: that is, 
one could opt for slightly different ways of performing the average, leading to an 
alternative definition of this map, but without altering the statement of the theorem.

\begin{proof}
The first part of the statement is elementary to prove, so we leave the details to the interested reader. Let us turn to the converse statement. Take $\bar{f}\in\bar{\cD}^{\gamma}_{p,q}$, define $f_n$ as in the statement, and fix some $\zeta \in \cA_\gamma$. Recall that by the definition of a model one
has $|\Gamma_{x,x+h}\tau|_\zeta \lesssim |\tau| \|h\|_\s^{\beta-\zeta}$ for all $\tau\in\cT_{\beta}$ and all $\zeta\leq \beta$. We deduce the bound
\begin{equs}[Eq:BoundIncrLp]
{}&\Big\| \big| f_n(x)-f_{n+1}(x) \big|_\zeta \Big\|_{L^p}\\
&\leq\Big\| \big| \Gamma_{x,x_n}\big(\aver{f}{n}(x_n)-\Gamma_{x_n,x_{n+1}}\aver{f}{n+1}(x_{n+1})\big) \big|_\zeta \Big\|_{L^p}\\
&\leq \sum_{\beta \geq \zeta} 2^{-n(\beta-\zeta)}\Big\| \big| \aver{f}{n}(x_n)-\Gamma_{x_n,x_{n+1}}\aver{f}{n+1}(x_{n+1}) \big|_\beta \Big\|_{L^p}\\
&\leq \sum_{\beta \geq \zeta}\sum_{h\in\cE_{n+1}^C} 2^{-n(\beta-\zeta)}\Big\| \big| \aver{f}{n}(x_n)-\Gamma_{x_n,x_n+h}\aver{f}{n+1}(x_n+h) \big|_\beta \Big\|_{L^p}\\
&\lesssim \sum_{\beta \geq \zeta}\sum_{h\in\cE_{n+1}^C} 2^{-n(\beta-\zeta)}\Big\| \big| \aver{f}{n}(x)-\Gamma_{x,x+h}\aver{f}{n+1}(x+h) \big|_\beta \Big\|_{\ell^p_n}\;,
\end{equs}
uniformly over all $n\geq 0$. Let $\bar q$ be the conjugate of $q$. By H\"older's inequality and (\ref{Eq:BoundIncrLp}), we get
\begin{equs}
{}&\sum_{n\geq n_0}\Big\| \big| f_n(x)-f_{n+1}(x) \big|_\zeta \Big\|_{L^p}\\
&\leq \bigg(\sum_{n\geq n_0}\Big\| \frac{\big| f_n(x)-f_{n+1}(x) \big|_\zeta }{2^{-n(\gamma-\zeta)}}\Big\|_{L^p}^q\bigg)^{\frac1{q}} \Big(\sum_{n\geq n_0} 2^{-n(\gamma-\zeta)\bar q}\Big)^{\frac1{\bar q}}\\
&\lesssim \$\bar f\$ 2^{-n_0(\gamma-\zeta)}\;,
\end{equs}
uniformly over all $n_0\geq 0$. This shows that $\cQ_\zeta f_n$ is a Cauchy sequence in $L^p$.
Since this is true for every $\zeta$, it follows that $f_n$ is Cauchy in $L^p$ and we write 
$f$ for its limit. We need to show that this defines an element of $\cD^\gamma_{p,q}$. The local bound is already proved by construction, so we focus on the translation bound. For any $h\in B(0,1)$, let $n_0$ be the largest integer such that $2^{-n_0} \geq \|h\|_\s$. We write
\begin{equs}\label{Eq:DecompTranslBound}
f(x+h) &- \Gamma_{x+h,x}f(x) = \big(f(x+h) - f_{n_0}(x+h)\big)\\
&\quad + \big(f_{n_0}(x+h) - \Gamma_{x+h,x}f_{n_0}(x)\big)+ \Gamma_{x+h,x}\big(f_{n_0}(x) - f(x)\big)\;,
\end{equs}
and we bound  these three terms separately. First of all, we observe that
\begin{equ}
f_{n_0}(x+h) - \Gamma_{x+h,x}f_{n_0}(x) = \Gamma_{x+h,(x+h)_{n_0}}\big(\aver{f}{n_0}((x+h)_{n_0}) - \Gamma_{(x+h)_{n_0},x_{n_0}} \aver{f}{n_0}(x_{n_0})\big)\;.
\end{equ}
Therefore, if we define the annulus 
$A(0,n_0):= B(0,2^{-n_0})\backslash B(0,2^{-n_0-1})$, we easily deduce that
\begin{equs}
\bigg(\sum_{n_0\geq 0}&\int_{h\in A(0,n_0)} \bigg\| \frac{\big| f_{n_0}(x+h) - \Gamma_{x+h,x}f_{n_0}(x) \big|_\zeta}{\|h\|_\s^{\gamma-\zeta}} \bigg\|^q_{L^p} \frac{dh}{\|h\|_\s^{|\s|} }\bigg)^{\frac1{q}}\\
&\lesssim \sum_{\beta\geq \zeta} \bigg(\sum_{n_0\geq 0} \sum_{h\in \cE_{n_0}^C} \bigg\| \frac{\big| \aver{f}{n_0}(x+h)-\Gamma_{x+h,x} \aver{f}{n_0}(x) \big|_\beta}{2^{-n_0(\gamma-\beta)}} \bigg\|^q_{\ell^p_{n_0}}\bigg)^{\frac1{q}}\;.
\end{equs}
We turn to the third term on the right hand side of (\ref{Eq:DecompTranslBound}). We have
\begin{equs}
\bigg(\sum_{n_0\geq 0}&\int_{h\in A(0,n_0)} \bigg\| \frac{\big| \Gamma_{x+h,x}\big(f_{n_0}(x) - f(x)\big) \big|_\zeta}{\|h\|_\s^{\gamma-\zeta}} \bigg\|^q_{L^p} \frac{dh}{\|h\|_\s^{|\s|} }\bigg)^{\frac1{q}}\\
&\lesssim \sum_{\beta \geq \zeta}\bigg(\sum_{n_0\geq 0}\int_{h\in A(0,n_0)} \bigg\| \frac{\big|f_{n_0}(x) - f(x)\big) \big|_\beta}{2^{-n_0(\gamma-\beta)}} \bigg\|^q_{L^p} \frac{dh}{\|h\|_\s^{|\s|} }\bigg)^{\frac1{q}}\\
&\lesssim \sum_{\beta \geq \zeta}\bigg(\sum_{n_0\geq 0} \bigg\| \sum_{n\geq n_0}\frac{\big|f_{n+1}(x) - f_n(x) \big|_\beta}{2^{-n_0(\gamma-\beta)}} \bigg\|^q_{L^p}\bigg)^{\frac1{q}}\\
&\lesssim \sum_{\beta \geq \zeta}\bigg(\sum_{n_0\geq 0} \bigg(\sum_{n\geq n_0}\bigg\| \frac{\big|f_{n+1}(x) - f_n(x)\big|_\beta}{2^{-n_0(\gamma-\beta)}} \bigg\|_{L^p}\bigg)^q\bigg)^{\frac1{q}}\;.
\end{equs}
At this point, we use (\ref{Eq:BoundIncrLp}) to get the further bound
\begin{equs}
\bigg\| &\frac{\big|f_{n+1}(x) - f_n(x)\big|_\beta}{2^{-n_0(\gamma-\beta)}} \bigg\|_{L^p} \\
&\lesssim \sum_{\delta\geq \beta}2^{-(n-n_0)(\gamma-\beta)} \sum_{h\in\cE_{n+1}^C}\bigg\| \frac{\big|\aver{f}{n}(x) -\Gamma_{x,x+h} \aver{f}{n+1}(x+h)\big|_\delta}{2^{-n(\gamma-\delta)}} \bigg\|_{\ell^p_n}\;.
\end{equs}
Applying Jensen's inequality on the sum over $n\geq n_0$, we deduce that this in turn
is bounded by
\begin{equs}
\sum_{\delta \geq \beta \geq \zeta}&\bigg(\sum_{n_0\geq 0} \sum_{n\geq n_0}2^{-(n-n_0)(\gamma-\beta)} \sum_{h\in\cE_{n+1}^C}\bigg\| \frac{\big|\aver{f}{n}(x) -\Gamma_{x,x+h} \aver{f}{n+1}(x+h)\big|_\delta}{2^{-n(\gamma-\delta)}} \bigg\|_{\ell^p_n}^q\bigg)^{\frac1{q}}\\
&\lesssim \sum_{\delta \geq \zeta}\bigg(\sum_{n\geq 0} \sum_{h\in\cE_{n+1}^C}\bigg\| \frac{\big|\aver{f}{n}(x) -\Gamma_{x,x+h} \aver{f}{n+1}(x+h)\big|_\delta}{2^{-n(\gamma-\delta)}} \bigg\|_{\ell^p_n}^q\bigg)^{\frac1{q}}\;,
\end{equs}
which is of order $\$\bar f\$$ as required. The bound on the first term on the right hand side of (\ref{Eq:DecompTranslBound}) relies on virtually the same argument, so we do not provide the details. This ensures that $f\in\cD^{\gamma}_{p,q}$ and that $\$f\$ \lesssim \$\bar{f}\$$.

Let us finally assume that $\bar{f}$ is obtained from some $f\in\cD^\gamma_{p,q}$ according to the procedure described in the first part of the statement. We aim at showing that the element built with the procedure in the second part of the statement coincides with $f$. To that end, it suffices to show that $f(x)-f_n(x)$ converges to $0$ as $n\rightarrow\infty$. We have
\begin{equs}
|f(x)-f_n(x)|_\zeta &\lesssim\int_{y\in B(x_n,2^{-n})} 2^{n|\s|} |f(x) - \Gamma_{x,y} f(y)|_\zeta dy\\
&\lesssim \int_{y\in B(x,2^{-n+1})} 2^{n|\s|} |f(x) - \Gamma_{x,y} f(y)|_\zeta dy\\
&\lesssim \sum_{\beta\geq\zeta}\int_{h\in B(0,2^{-n+1})} 2^{n|\s|} \|h\|_\s^{\beta-\zeta} |f(x+h) - \Gamma_{x+h,x} f(x)|_\beta dh\;,
\end{equs}
uniformly over all $x\in\R^d$. Let $\bar{q}$ be the conjugate of $q$. We get
\begin{equs}
\big\| |&f(x)-f_n(x)|_\zeta \big\|_{L^p}\\
&\lesssim \sum_{\beta\geq\zeta}\Big\|\int_{h\in B(0,2^{-n+1})} 2^{n|\s|} \|h\|_\s^{\gamma-\zeta} \frac{|f(x+h) - \Gamma_{x+h,x} f(x)|_\beta}{\|h\|_\s^{\gamma-\beta}}dh\Big\|_{L^p}\\
&\lesssim \sum_{\beta\geq\zeta}\int_{h\in B(0,2^{-n+1})} \Big\| \|h\|_\s^{\gamma-\zeta} \frac{|f(x+h) - \Gamma_{x+h,x} f(x)|_\beta}{\|h\|_\s^{\gamma-\beta}}\Big\|_{L^p}\frac{dh}{\|h\|_\s^{|\s|}}\\
&\lesssim \sum_{\beta\geq\zeta}\bigg(\int_{h\in B(0,2^{-n+1})} \Big\| \frac{|f(x+h) - \Gamma_{x+h,x} f(x)|_\beta}{\|h\|_\s^{\gamma-\beta}}\Big\|_{L^p}^q\frac{dh}{\|h\|_\s^{|\s|}}\bigg)^{\frac1{q}}\\
&\qquad\times\bigg(\int_{h\in B(0,2^{-n+1})} \|h\|_\s^{\bar{q}(\gamma-\zeta)}\frac{dh}{\|h\|_\s^{|\s|}}\bigg)^{\frac1{\bar{q}}}
\lesssim 2^{-n(\gamma-\zeta)} \$f\$\;,
\end{equs}
which vanishes as $n\rightarrow\infty$, thus concluding the proof.
\end{proof}

Let us point out again that, as already observed in the proof of Lemma~\ref{Lemma:LocalBoundn}, 
these spaces are \textit{essentially} nested in the sense that, for $f \in \cD^{\gamma}_{p,q}$ (resp.\  
$\aver{f}{} \in \bar{\cD}^{\gamma}_{p,q}$), their projection to $\cT_{<\gamma'}$ lies in $\cD^{\gamma'}_{p,q}$ 
(resp.\ in $\bar{\cD}^{\gamma'}_{p,q}$) whenever $\gamma' < \gamma$. In the case where $q <\infty$
however, this only holds if $\gamma' \notin\cA_\gamma$. This further restriction is a consequence of 
our model being bounded in a H\"older-type norm. In order to avoid this problem, we therefore 
make the following assumption for the remainder of this article.

\begin{assumption}\label{Assumption:Gamma}
The parameter $\gamma$ does not coincide with an element in $\cA$.
\end{assumption}

\section{The reconstruction theorem}\label{Sec:Reconstruction}

Before we turn to the statement of the reconstruction theorem in this context, we
introduce a distance between two modelled distributions $f$ and $\bar{f}$ built from two 
possibly different models $(\Pi,\Gamma)$ and $(\bar{\Pi},\bar{\Gamma})$. Following
\cite[Rem.~3.6]{Hairer2014}, we set
\begin{equs}
{}&\$f,\bar{f}\$ = \Big\| \big| f(x)-\bar{f}(x) \big|_\zeta \Big\|_{L^p} \\
&+ \bigg(\int_{B(0,1)}  \bigg\| \frac{\big| f(x+h) - \bar{f}(x+h)-\Gamma_{x+h,x} f(x) + \bar{\Gamma}_{x+h,x} \bar{f}(x) \big|_\zeta}{\|h\|_\s^{\gamma-\zeta}} \bigg\|^q_{L^p} \frac{dh}{\|h\|_\s^{|\s|}}\bigg)^{\frac1{q}}\;.
\end{equs}
From now on, we also assume that the polynomial regularity structure $(\bar\cA,\bar{\cT},\bar\cG)$ is 
included in the regularity structure under consideration, and that it provides the only elements with 
integer homogeneity. This is not an essential assumption for Theorem~\ref{Th:Reconstruction}, but 
it simplifies its statement.

\begin{theorem}\label{Th:Reconstruction}
Let $(\cA,\cT,\cG)$ be a regularity structure and $(\Pi,\Gamma)$ be a model. Let $\gamma \in \R_+\backslash\N$, and set $\alpha=\min(\cA\backslash\N) \wedge \gamma$. If $q=\infty$, let $\bar\alpha = \alpha$, otherwise take $\bar\alpha < \alpha$. Then, for $\gamma > 0$, there exists a unique continuous linear map $\cR:\cD^\gamma_{p,q}\rightarrow\cB^{\bar\alpha}_{p,q}$ such that
\begin{equ}\label{Eq:BoundRecons}
\bigg\| \Big\| \sup_{\eta \in \BB^r} \frac{\big|\langle \cR f - \Pi_x f(x), \eta_x^\lambda\rangle \big|}{\lambda^{\gamma}} \Big\|_{L^p} \bigg\|_{L^q_\lambda} \lesssim \$ f\$ \|\Pi\| (1+\|\Gamma\|)\;,
\end{equ}
uniformly over all $f\in\cD^\gamma_{p,q}$ and all models $(\Pi,\Gamma)$.

Furthermore, given a second model $(\bar{\Pi},\bar\Gamma)$ and denoting by $\bar\cR$ the associated reconstruction operator, we have
\begin{equs}\label{Eq:BoundRecons2}
{}&\bigg\| \Big\| \sup_{\eta \in \BB^r} \frac{\big|\langle \cR f - \bar\cR \bar{f} - \Pi_x f(x) + \bar{\Pi}_x \bar{f}(x), \eta_x^\lambda\rangle \big|}{\lambda^{\gamma}} \Big\|_{L^p} \bigg\|_{L^q_\lambda}\\
&\lesssim \$ f;\bar{f}\$ \|\Pi\| (1+\|\Gamma\|) + \$ \bar{f}\$ \Big( \|\Pi -\bar{\Pi}\|(1+\|\Gamma\|) + \|\bar{\Pi}\| \|\Gamma-\bar{\Gamma}\| \Big)\;.
\end{equs}
\end{theorem}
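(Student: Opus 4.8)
The plan is to follow the strategy of the wavelet-based proof of the reconstruction theorem, but carry out all estimates at the level of the Besov norms given by Proposition~\ref{PropCharact}. I would first reduce the problem to the countable wavelet characterisation: it suffices to construct a candidate $\cR f$ and control the coefficients $\langle \cR f - \Pi_x f(x), \psi^n_y\rangle$ against the $\ell^p_n$-$\ell^q$ norm, since by Remark~\ref{RkCharact} this controls the $\cB^{\bar\alpha}_{p,q}$ norm (the loss $\bar\alpha < \alpha$ when $q<\infty$ being exactly what lets a geometric series in $2^{n(\bar\alpha-\alpha)}$ converge in $\ell^q$). Define $\cR_n f := \sum_{y\in\Lambda_n}\langle \Pi_y f(y),\varphi^n_y\rangle \varphi^n_y$, and set $\cR f := \lim_{n\to\infty}\cR_n f$, showing the limit exists in $\cB^{\bar\alpha}_{p,q}$ by controlling the telescoping increments $\cR_{n+1}f - \cR_n f$.

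The key estimate is on $\langle \cR_{n+1} f - \cR_n f, \psi^n_y\rangle$ (together with $\langle \cR_n f - \Pi_y f(y),\varphi^n_y\rangle$ and the analogue against $\psi^m_y$ for $m\le n$). Using the refinement relation $V_n\subset V_{n+1}$ one writes $\cR_{n+1}f - \cR_n f$ locally in terms of differences $\Pi_y f(y) - \Pi_{y'} f(y')$ for neighbouring grid points $y,y'$, and then uses the identity $\Pi_{y}f(y) - \Pi_{y'}f(y') = \Pi_{y}\big(f(y) - \Gamma_{y,y'}f(y')\big)$ together with the model bound $\|\Pi\|$ and the $\cD^\gamma_{p,q}$ translation bound. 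This gives, for each fixed $\psi\in\Psi$,
\begin{equ}
\Big\| \big|\langle \cR_{n+1}f - \cR_n f,\psi^n_x\rangle\big| \Big\|_{\ell^p_n} \lesssim 2^{-n(\frac{|\s|}{2}+\gamma)} \$f\$ \|\Pi\|(1+\|\Gamma\|)\;,
\end{equ}
where the crucial point is that summing $|f(y)-\Gamma_{y,y'}f(y')|_\zeta$ over the finitely many neighbours $y'$ of $y$ and over $\zeta\in\cA_\gamma$, then taking $\ell^p_n$ norms, reproduces (a discrete version of) the $\cD^\gamma_{p,q}$ translation seminorm on the dyadic scale $2^{-n}$; this is where the equivalence with $\bar{\cD}^\gamma_{p,q}$ from Theorem~\ref{Th:EqDgamma}, or at least the same averaging idea, is used to pass between integral and discrete formulations. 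Since $\gamma > \bar\alpha$, these increments are summable in the appropriate $\ell^q$ sense, giving convergence of $\cR_n f$ and, by telescoping from scale $n$ to $\infty$, the bound \eqref{Eq:BoundRecons}. The coefficients against $\varphi^0_x$ are handled directly from the local bound on $f$ and $\|\Pi\|$.

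For uniqueness, if $\cR$ and $\tilde\cR$ both satisfy \eqref{Eq:BoundRecons}, then $\cR f - \tilde\cR f$ satisfies $\|\langle \cR f - \tilde\cR f,\eta^\lambda_x\rangle\|_{L^p} \lesssim \lambda^\gamma$ for all $\eta\in\BB^r$ and, testing against wavelets at scale $n$, one finds the $\ell^p_n$ norm of the coefficients decays like $2^{-n\gamma}$; since $\gamma > \bar\alpha \ge \alpha - \text{(anything)}$ and in particular $\gamma>0$ while a nonzero distribution has coefficients that cannot decay faster than dictated by any fixed Besov index, one concludes $\cR f - \tilde\cR f = 0$ — more carefully, one shows it lies in $\bigcap_\beta \cB^\beta_{p,q}$ hence, being a limit whose wavelet coefficients vanish, is zero. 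The second bound \eqref{Eq:BoundRecons2} follows by rerunning the same telescoping estimate on the difference $\cR_n f - \bar\cR_n\bar f$, splitting $\Pi_y f(y) - \bar\Pi_y\bar f(y)$ and $\Gamma_{y,y'}f(y') - \bar\Gamma_{y,y'}\bar f(y')$ into the four groups of terms $(f-\bar f)$, $(\Pi - \bar\Pi)$, $(\Gamma-\bar\Gamma)$ and using $\$f;\bar f\$$, $\|\Pi-\bar\Pi\|$, $\|\Gamma-\bar\Gamma\|$ respectively, exactly as in \cite{Hairer2014}; this is routine once the first part is done.

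I expect the main obstacle to be the bookkeeping in the core increment estimate: one must carefully track the dependence on the homogeneities $\zeta$ (there are finitely many, so sums over $\zeta$ cost only a constant) and, more delicately, handle the interplay between the discrete $\ell^p_n$/$\ell^q$ norms appearing from the wavelet coefficients and the continuous integral $L^p$/$L^q_\lambda$ norms in the definition of $\cD^\gamma_{p,q}$ and of $\cB^{\bar\alpha}_{p,q}$. The cleanest route is probably to phrase everything through $\bar{\cD}^\gamma_{p,q}$ via Theorem~\ref{Th:EqDgamma}, so that the translation bound is already in discrete dyadic form matching the wavelet scales, and then the reconstruction estimate becomes a fairly direct telescoping/geometric-series argument; the non-integer assumption on $\gamma$ (Assumption~\ref{Assumption:Gamma}) and the choice $\alpha=\min(\cA\setminus\N)\wedge\gamma$ enter to ensure the relevant homogeneities are bounded away from $\gamma$ and that the polynomial part does not interfere.
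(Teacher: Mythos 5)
Your plan is the same in broad strokes as the paper's proof (wavelet decomposition, sewing/telescoping, then uniqueness), but there are a few places where the details are either missing or, in one case, simply wrong.

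First, the definition $\cR_n f := \sum_{y\in\Lambda_n}\langle\Pi_y f(y),\varphi^n_y\rangle\varphi^n_y$ is ill-posed as written: elements of $\cD^\gamma_{p,q}$ are only defined up to Lebesgue-null sets, so $f(y)$ for $y$ a lattice point has no meaning. The paper's fix is precisely the averaging you mention in passing but never instantiate: one sets $A^n_x = \langle \Pi_x \aver{f}{n}(x),\varphi^n_x\rangle$ with $\aver{f}{n}(x) = \int_{B(x,2^{-n})}2^{n|\s|}\Gamma_{x,y}f(y)\,dy$, so that the coefficients are built from a genuine $\bar\cD^\gamma_{p,q}$-object. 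You say the cleanest route is "probably" to pass through $\bar\cD^\gamma_{p,q}$; this is not optional, it is what makes the construction well-defined.

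Second, your convergence argument is vague about where the loss $\bar\alpha<\alpha$ actually comes from. The increment $\cR_{n+1}f-\cR_nf$ must be split into its $V_n$ and $V_n^\perp$ components. The $V_n$ component decays like $2^{-n\gamma}$ (no loss needed, only $\bar\alpha<0$), while the $V_n^\perp$ component is controlled by the local bound on $\aver{f}{n+1}$ and gives exactly a factor $2^{-n(\alpha-\bar\alpha)}$; it is this term, and only this term, that forces $\bar\alpha<\alpha$ when $q<\infty$. Glossing this over as "a geometric series in $2^{n(\bar\alpha-\alpha)}$" conflates two different mechanisms.

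Third, the uniqueness argument is incorrect. From $|\langle \cR f-\tilde\cR f,\eta^\lambda_x\rangle|\lesssim\lambda^\gamma$ (in the stated $L^p$-$L^q_\lambda$ sense) one does \emph{not} get that the difference lies in $\bigcap_\beta\cB^\beta_{p,q}$ --- the decay exponent is fixed at $\gamma$, so the best you can read off from wavelet coefficients alone is membership in $\cB^\gamma_{p,q'}$ for suitable $q'$, which is a nontrivial space. What actually kills the difference is that the bound holds for test functions $\eta\in\BB^r$ that do \emph{not} annihilate constants, so testing against a mollifier $\rho^\delta$ and then against $\psi_\delta=\rho^\delta*\psi$ gives $|\langle\cR f-\tilde\cR f,\psi_\delta\rangle|\lesssim\delta^\gamma\to 0$, while $\psi_\delta\to\psi$ in $\cD(\R^d)$. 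This is the paper's argument, and your wavelet-coefficient version would need to be reworked along those lines (using the scaling functions $\varphi^n_x$, not the wavelets $\psi^n_x$) to be correct.

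Finally, you omit the step showing that when $\alpha>0$ one actually recovers $\cR f\in\cB^{\bar\alpha}_{p,q}$ and not merely $\cB^{\bar\alpha\wedge 0^-}_{p,q}$; the convergence criterion alone only gives a negative regularity index, and one needs the established bound \eqref{Eq:BoundRecons} together with the decomposition $\langle\cR f,\eta^\lambda_x\rangle = \langle\cR f-\Pi_x f(x),\eta^\lambda_x\rangle+\langle\Pi_x f(x),\eta^\lambda_x\rangle$ to bootstrap the regularity.
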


Let us comment on the definition of $\alpha$. If the regularity structure has some level of negative homogeneity, then $\alpha$ is taken to be the lowest homogeneity. On the other hand, if the regularity structure consists of usual monomials and of levels with positive, non-integer homogeneity, then $\alpha$ is the lowest non-integer homogeneity. Finally, if the regularity structure consists only of usual monomials, then $\alpha$ is equal to $\gamma$: however, the case where $\gamma\in\N$ has to be treated differently and is not covered by this result (except in a trivial way by using the aforementioned
fact that elements of $\cD^\gamma_{p,q}$ also belong to $\cD^{\gamma'}_{p,q}$ for
$\gamma' < \gamma$).

\begin{remark}
Recall that the definition of a model $(\Pi,\Gamma)$ used in this article is (a global version of) the one introduced in \cite{Hairer2014} which is  modelled on the usual H\"older norms. This is the main reason for
the fact that we need to take $\bar\alpha < \alpha$ in Theorem~\ref{Th:Reconstruction} when $q < \infty$. 
Indeed, assuming that $\alpha = \min \cA < 0$ and writing $\Xi$ for an element in $\CT_\alpha$, 
one easily sees that the modelled distribution $f = \Xi$ does belong to $\cD^\gamma_{p,q}$ (at least locally in space) for all $q \in [1,\infty]$. 
However, one has $\cR f = \Pi_x \Xi \in \cC^\alpha$,
which does not necessarily belong to $\cB^\alpha_{p,q}$ whenever $q < \infty$. 
\end{remark}
\begin{remark}
When the regularity structure consists only of usual monomials, our reconstruction theorem as stated 
asserts that the distribution has regularity $\gamma^-$ if $q<\infty$. The results in the 
following subsection ensure that the regularity is actually $\gamma$ also in this case.
\end{remark}

\subsection{A consequence of the reconstruction theorem}

Recall the notation $\bar\cT$ for the polynomial regularity structure. The following result states that the two spaces $\cB^\gamma_{p,q}$ and $\cD^\gamma_{p,q}(\bar \cT)$ are the same, which justifies our
definitions.
\begin{proposition}\label{Prop:OnetoOne}
Take $\gamma \in \R_+ \backslash\N$ and consider the polynomial regularity structure $\bar\cT$. Then, the reconstruction operator $\cR$ is a isomorphism between $\cD^\gamma_{p,q}(\bar \cT)$ and $\cB^\gamma_{p,q}$.
\end{proposition}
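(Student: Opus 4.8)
I want to show that the reconstruction operator $\cR$, restricted to the polynomial regularity structure $\bar\cT$, is a bounded linear bijection from $\cD^\gamma_{p,q}(\bar\cT)$ onto $\cB^\gamma_{p,q}$, with bounded inverse. Boundedness of $\cR$ into $\cB^{\bar\alpha}_{p,q}$ is already Theorem~\ref{Th:Reconstruction}; the first task is to upgrade the target to $\cB^\gamma_{p,q}$. In the polynomial case $\alpha = \gamma$, so the reconstruction bound \eqref{Eq:BoundRecons} reads
\[
\bigg\| \Big\| \sup_{\eta\in\BB^r} \frac{|\langle \cR f - \Pi_x f(x),\eta_x^\lambda\rangle|}{\lambda^\gamma}\Big\|_{L^p}\bigg\|_{L^q_\lambda} \lesssim \$f\$\;.
\]
Since $\Pi_x X^k = (\cdot - x)^k$ for the polynomial model, one has $\langle \Pi_x f(x),\eta_x^\lambda\rangle$ controlled by a polynomial in $\lambda$ whose coefficients are the $|f(x)|_\zeta$, and in fact for $\eta\in\BB^r_{\lfloor\gamma\rfloor}$ (annihilating polynomials of scaled degree $\le \lfloor\gamma\rfloor \ge \gamma - 1$... here one uses $\gamma\notin\N$) the contribution of all monomials of degree $\le \gamma$ vanishes, while for $\eta\in\BB^r$ one only needs the $L^p$-bound on $|f(x)|_0$ at scale $\lambda\asymp 1$. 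Combining these with the reconstruction bound gives exactly the two conditions of Definition~\ref{Def:Besov} (using \eqref{Eq:BoundDefBesovPos} since $\gamma > 0$), so $\cR f \in \cB^\gamma_{p,q}$ with $\|\cR f\|_{\cB^\gamma_{p,q}} \lesssim \$f\$$. This also resolves the remark following the theorem: the ``$\gamma^-$'' loss is an artefact of the generic statement, not present for $\bar\cT$.

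\textbf{Injectivity and the inverse.} For injectivity, suppose $\cR f = 0$. The uniqueness clause of Theorem~\ref{Th:Reconstruction} together with the local description forces $\Pi_x f(x)$ to vanish in the appropriate sense near every $x$; more concretely, testing $\cR f - \Pi_x f(x)$ against $\eta_x^\lambda$ and letting $\lambda\downarrow 0$, the bound $\lesssim \lambda^\gamma \to 0$ shows $\langle \Pi_x f(x),\eta_x^\lambda\rangle \to \langle \cR f,\eta_x^\lambda\rangle$; but $\Pi_x f(x)$ is a polynomial, so evaluating against a suitable family of $\eta$'s of each order recovers each coefficient $f_k(x)$ as an explicit limit of $\langle \cR f,\eta_x^\lambda\rangle$, hence $f_k(x) = 0$ for a.e.\ $x$. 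This argument in fact \emph{constructs} the candidate inverse: given $\xi\in\cB^\gamma_{p,q}$, define $f_k(x)$ for each $k$ with $|k|_\s < \gamma$ as the corresponding (renormalised) limit $\lim_{\lambda\downarrow 0}\langle\xi,(\text{approximate }k\text{-th derivative kernel})^\lambda_x\rangle$, equivalently via the wavelet coefficients of $\xi$. One should check these limits exist in $L^p$ — this is where Lemma~\ref{Lemma:BesovFunction} (and its higher-order analogues, obtained by differentiating) enters, using $\gamma > 0$ — and set $f = \sum_k f_k X^k$.

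\textbf{The main work: $f\in\cD^\gamma_{p,q}(\bar\cT)$ and $\cR f = \xi$.} One then has to verify that this $f$ satisfies the translation bound of Definition~\ref{Def:Dgamma}: that is,
\[
\bigg(\int_{B(0,1)}\bigg\| \frac{|f(x+h)-\Gamma_{x+h,x}f(x)|_\zeta}{\|h\|_\s^{\gamma-\zeta}}\bigg\|^q_{L^p}\frac{dh}{\|h\|_\s^{|\s|}}\bigg)^{1/q} < \infty\;,
\]
where $\Gamma_{x+h,x}$ is the Taylor-shift operator. Since $f(x+h)-\Gamma_{x+h,x}f(x)$ is, coefficient by coefficient, exactly the Taylor remainder of the would-be function $\xi$, its projection onto $X^k$ is a combination of the $f_\ell(x+h)$ and $f_\ell(x)$ that should be $O(\|h\|_\s^{\gamma-|k|_\s})$ in the averaged $L^p$-sense — and this is precisely the statement that $\xi \in \cB^\gamma_{p,q}$, now run backwards. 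The cleanest route is probably to pass through the wavelet characterisation (Proposition~\ref{PropCharact}) and/or the equivalent space $\bar\cD^\gamma_{p,q}$ of Theorem~\ref{Th:EqDgamma}: express both the Besov norm of $\xi$ and the $\bar\cD^\gamma_{p,q}$-norm of the averaged coefficients $\aver{f}{n}$ in terms of the same wavelet data, and observe they match up to constants. Finally, $\cR f = \xi$: the distribution $\xi$ satisfies the local bound $|\langle \xi - \Pi_x f(x),\eta_x^\lambda\rangle|\lesssim\|f\|\,\lambda^\gamma$ (this is just the Taylor estimate again, integrated), so by the uniqueness part of Theorem~\ref{Th:Reconstruction} we must have $\cR f = \xi$. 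Together with the bound $\$f\$ \lesssim \|\xi\|_{\cB^\gamma_{p,q}}$ extracted along the way, this gives that $\cR$ is an isomorphism with $\cR^{-1}$ bounded.

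\textbf{Expected main obstacle.} The genuinely non-trivial step is the equivalence of norms in the direction $\xi\in\cB^\gamma_{p,q} \Rightarrow f\in\cD^\gamma_{p,q}(\bar\cT)$ with control $\$f\$\lesssim\|\xi\|_{\cB^\gamma_{p,q}}$: one must show that control of $\xi$ against test functions \emph{at all scales} $\lambda$ translates into control of the Taylor remainders of its (higher) ``derivatives'' in the averaged-$L^p$-over-$h$ norm. As the introduction itself stresses, this kind of statement — essentially a Besov-space Taylor/Whitney estimate — is exactly the type of result whose proof ``from differences'' is delicate, and the honest way to do it here is to route everything through the wavelet representation (Proposition~\ref{PropCharact}) or equivalently through $\bar\cD^\gamma_{p,q}$ via Theorem~\ref{Th:EqDgamma}, where the analogous estimates become comparisons of weighted $\ell^p\!-\!\ell^q$ sequence norms. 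Everything else — the polynomial model computations, the identification $\cR f = \xi$, and injectivity — is routine bookkeeping built on Theorem~\ref{Th:Reconstruction} and Lemma~\ref{Lemma:BesovFunction}.
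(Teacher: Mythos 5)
Your overall architecture matches the paper's: it splits the proposition into (i) showing that $\cR f$ lands in $\cB^\gamma_{p,q}$ and that $\cR$ is injective (Lemma~\ref{Lemma:ReconsPoly}, where $\cR f = f_0$ and $\partial^k\cR f = k!f_k$), and (ii) constructing a bounded right-inverse $\iota$ (Lemma~\ref{Prop:Inverse}). Your first two paragraphs are essentially Lemma~\ref{Lemma:ReconsPoly} — in particular the observation that $\langle\Pi_x f(x),\eta_x^\lambda\rangle = 0$ for $\eta\in\BB^r_{\lfloor\gamma\rfloor}$ is precisely the argument the paper uses to upgrade the target to $\cB^\gamma_{p,q}$ — and your injectivity argument is a slightly more hand-wavy version of the paper's identification $\partial^k\cR f = k!f_k$.

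The gap is in the third paragraph, and it is exactly where you flag it. You propose to build the inverse by defining local averages and passing through $\bar\cD^\gamma_{p,q}$ via Theorem~\ref{Th:EqDgamma}, which is indeed the paper's route, but you do not say what the averages actually are, and the naive guess $\aver{f}{n}_k(x) \approx \langle\partial^k\xi,\rho^n_x\rangle$ does \emph{not} produce an element of $\bar\cD^\gamma_{p,q}(\bar\cT)$. The paper has to define $\aver{f}{n}_k(x) = \langle\partial^k\xi,P^{\lfloor\gamma\rfloor}_{k,x}(\rho^n,\cdot)\rangle$ with the kernel
\begin{equation*}
P^q_{k,x}(\eta,y) = \sum_{\ell\in\N^d:\,|k+\ell|\le q}(-1)^\ell\,\partial^\ell_y\Big(\eta(y-x)\,\frac{(x-y)^\ell}{k!\,\ell!}\Big)\;,
\end{equation*}
which is engineered so that the consistency and translation differences $\Phi^q_k$ and $\Psi^q_k$ annihilate all polynomials of scaled degree $\le q-|k|_\s$. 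Proving that annihilation property is a non-trivial recursion on $q$ (and is what makes the subsequent $\cB^{\gamma-|k|}_{p,q}$ bound on $\partial^k\xi$ applicable). Your claim that once one is in the ``wavelet representation'' the estimates become ``comparisons of weighted $\ell^p$--$\ell^q$ sequence norms'' and the rest is ``routine bookkeeping'' understates this: the paper does not reduce this step to a wavelet-coefficient comparison at all, but rather to the test-function characterisation of Definition~\ref{Def:Besov} applied to the compactly supported kernels $\Phi^q_k$ and $\Psi^q_k$, and the content of the proof is precisely the algebraic design of those kernels plus the recursion showing they have the right vanishing moments. So your proposal correctly identifies the strategy and the hard step, but leaves the substance of that step unresolved rather than actually discharging it.
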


This result is the consequence of the following two lemmas. The first one shows that $\cR$ is injective.

\begin{lemma}\label{Lemma:ReconsPoly}
Let $f\in\cD^\gamma_{p,q}(\bar \cT)$. Then, for every $k\in\N^d$ such that $|k|_\s < \gamma$, the map $x\mapsto k! f_k(x)$ coincides (as a distribution) with the $k$-th derivative of $\cR f$, and one
has $\cR f = f_0 \in \cB^\gamma_{p,q}$.
\end{lemma}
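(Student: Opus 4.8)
The plan is to exploit the structure of the polynomial regularity structure and the defining property \eqref{Eq:BoundRecons} of the reconstruction operator. First I would fix $f \in \cD^\gamma_{p,q}(\bar\cT)$, write $f(x) = \sum_{|k|_\s < \gamma} f_k(x) X^k$, and recall that for the polynomial regularity structure one has $\Pi_x X^k = (\cdot - x)^k$, i.e. $\langle \Pi_x f(x), \eta^\lambda_x\rangle = \sum_k f_k(x) \int (y-x)^k \eta^\lambda_x(y)\,dy$, which is of order $\lambda^{|k|_\s}$ (and vanishes for $\eta \in \BB^r_{\lfloor\gamma\rfloor}$ when one tests against mean-zero functions of sufficiently high order, except for the $k=0$ term). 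The bound \eqref{Eq:BoundRecons} then says that $\langle \cR f, \eta^\lambda_x\rangle$ is well-approximated by $\langle \Pi_x f(x), \eta^\lambda_x\rangle$, with error $O(\lambda^\gamma)$ in the relevant $L^p$–$L^q_\lambda$ norm.

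The core identification is that $\cR f = f_0$ as a distribution. To see this I would test $\cR f$ against a smooth compactly supported test function and compare with $f_0$: using a mollification argument as in Lemma~\ref{Lemma:BesovFunction}, approximate the test function by convolutions with rescaled bumps $\rho^\lambda$, and use \eqref{Eq:BoundRecons} together with the fact that $\langle \Pi_x f(x) - f_0(x) X^0, \rho^\lambda_x\rangle = \sum_{|k|_\s \geq 1} f_k(x)\int(y-x)^k\rho^\lambda_x(y)\,dy = O(\lambda)$ (the $k=0$ term being exactly $f_0(x)$ since $\rho$ integrates to $1$), to conclude $\langle \cR f, \rho^\lambda_x\rangle - f_0(x) \to 0$ in $L^p$ as $\lambda \downarrow 0$; uniqueness of limits then forces $\cR f = f_0$. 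Since by \eqref{Eq:BoundRecons} $\cR f \in \cB^{\bar\alpha}_{p,q}$ but also, arguing via the local/translation bounds on $f_0$ directly (the defining bounds of $\cD^\gamma_{p,q}(\bar\cT)$ for the level-$0$ component, combined with the $\Gamma$-translation which shifts Taylor polynomials), one upgrades this to $f_0 \in \cB^\gamma_{p,q}$ — this is really where Assumption~\ref{Assumption:Gamma} and the sharper statement (regularity $\gamma$ rather than $\gamma^-$) enter, presumably via the wavelet characterisation of Proposition~\ref{PropCharact}.

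For the statement about derivatives, I would proceed by induction on $|k|_\s$, or directly: for fixed $k$, note that the modelled distribution $\partial^k f$ obtained by differentiating the abstract Taylor expansion (shifting indices) lies in $\cD^{\gamma - |k|_\s}_{p,q}(\bar\cT)$, its level-$0$ component is $k! f_k$, and its reconstruction is $\partial^k(\cR f)$ in the distributional sense — this last point follows because differentiation commutes with $\cR$ for the polynomial structure, which one checks again by testing against $\rho^\lambda_x$ and integrating by parts, using \eqref{Eq:BoundRecons} to control the error. Applying the already-established identification $\cR(\partial^k f) = (\partial^k f)_0 = k! f_k$ then gives $k! f_k = \partial^k (\cR f)$ as distributions.

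The main obstacle I anticipate is not the identification $\cR f = f_0$ — that is a fairly standard mollification argument — but rather the sharp regularity claim $f_0 \in \cB^\gamma_{p,q}$ (as opposed to merely $\cB^{\bar\alpha}_{p,q}$ with $\bar\alpha < \gamma$) when $q < \infty$. The bound \eqref{Eq:BoundRecons} alone only delivers $\cB^{\bar\alpha}_{p,q}$, so one must go back to the definition of $\cD^\gamma_{p,q}(\bar\cT)$: the translation bound on $f$ at level $0$ says precisely that $f_0(x+h) - \sum_{|k|_\s < \gamma} \binom{\cdot}{\cdot} h^k f_k(x)$ is $O(\|h\|_\s^\gamma)$ in the appropriate $L^p$–$L^q$ sense, which is exactly a difference-quotient characterisation of $\cB^\gamma_{p,q}$; making this rigorous requires matching our definition via $\Gamma$-translations with the classical higher-order-difference definition of Besov spaces (cf. the discussion after Definition~\ref{Def:Dgamma} referencing \cite[Sec.~2.5.12]{Triebel}), and this is the delicate technical point. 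I would handle it either by a direct computation with iterated differences, or by invoking the wavelet characterisation (Proposition~\ref{PropCharact}) after controlling the wavelet coefficients of $f_0$ using the translation bound.
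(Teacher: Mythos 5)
Your overall plan is workable but diverges from the paper's argument at the two key junctures, and in one place you've misjudged where the real work lies.

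For the identification of the derivatives, the paper does \emph{not} differentiate the modelled distribution abstractly and re-apply the reconstruction theorem, which is what you propose. Instead it characterises $\partial^k\cR f$ directly by an analogue of~\eqref{Eq:BoundRecons}: there is at most one distribution $\xi^{(k)}$ with
\[
\bigg\| \Big\| \sup_{\eta \in \BB^{r+|k|}} \frac{\big|\langle \xi^{(k)} - \partial^k\Pi_x f(x), \eta_x^\lambda\rangle \big|}{\lambda^{\gamma-|k|}} \Big\|_{L^p} \bigg\|_{L^q_\lambda} <\infty\;,
\]
(a consequence of the uniqueness argument in the proof of Theorem~\ref{Th:Reconstruction}). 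One then checks that $\partial^k\cR f$ satisfies this bound because $\partial^k(\eta^\lambda)=\lambda^{-|k|}(\partial^k\eta)^\lambda$, and separately that $\xi^{(k)}(x):=k!f_k(x)$ also satisfies it, using the pointwise identity $\big(\xi^{(k)}-\partial^k\Pi_xf(x)\big)(y)=k!\,\cQ_k\big(f(y)-\Gamma_{y,x}f(x)\big)$ together with the translation bound defining $\cD^\gamma_{p,q}(\bar\cT)$. Uniqueness then forces the two to coincide, and $k=0$ gives $\cR f=f_0$ without any mollification. Your mollification route (modelled on Lemma~\ref{Lemma:BesovFunction}) and your proposal to reconstruct an abstractly differentiated modelled distribution would both work, but they are longer detours: the former duplicates the uniqueness mechanism already present in Theorem~\ref{Th:Reconstruction}, and the latter requires you to first prove that abstract differentiation maps $\cD^\gamma_{p,q}(\bar\cT)$ to $\cD^{\gamma-|k|_\s}_{p,q}(\bar\cT)$ and commutes with $\cR$, which is essentially equivalent to what you're trying to prove.

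The more serious issue is the final claim, $\cR f=f_0\in\cB^\gamma_{p,q}$. You identify this as the delicate part and propose to re-derive the Besov bound from scratch by relating the $\Gamma$-translation bound to higher-order difference quotients or to wavelet coefficients. That would be technically heavy and is not needed. The paper's observation is that the bound~\eqref{Eq:BoundRecons}, which you dismiss as delivering only $\cB^{\bar\alpha}_{p,q}$, already carries the exponent $\lambda^\gamma$. The reason the reconstruction theorem as a black box only yields $\cB^{\bar\alpha}_{p,q}$ is that for a general structure $\langle\Pi_xf(x),\eta^\lambda_x\rangle$ does not vanish. Here, however, $\Pi_x f(x)$ is a polynomial of scaled degree less than $\gamma$, so for $\eta\in\BB^r_{\lfloor\gamma\rfloor}(\R^d)$ one has $\langle\Pi_xf(x),\eta^\lambda_x\rangle=0$, and~\eqref{Eq:BoundRecons} immediately gives the second condition of~\eqref{Eq:BoundDefBesovPos} for $\cR f$ with regularity $\gamma$, not $\bar\alpha$. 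The first condition of~\eqref{Eq:BoundDefBesovPos} is simply $f_0\in L^p$, part of the definition of $\cD^\gamma_{p,q}(\bar\cT)$. So the step you expected to be hard is in fact immediate once you notice that testing against polynomial-annihilating functions kills the model term entirely; your proposal, while not wrong, would replicate a substantial portion of the proof of Proposition~\ref{PropCharact} unnecessarily.
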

\begin{proof}
Let $k\in\N^d$ be such that $|k| < \gamma$. A careful inspection of the proof of the uniqueness part of the reconstruction theorem yields that there is at most one distribution $\xi^{(k)}$ on $\R^d$ such that
\begin{equ}\label{Eq:PartialkR}
\bigg\| \Big\| \sup_{\eta \in \BB^{r+|k|}} \frac{\big|\langle \xi^{(k)} - \partial^k\Pi_x f(x), \eta_x^\lambda\rangle \big|}{\lambda^{\gamma-|k|}} \Big\|_{L^p} \bigg\|_{L^q_\lambda} <\infty\;.
\end{equ}
Since $\partial^k (\eta^\lambda)=\lambda^{-|k|} (\partial^k \eta)^\lambda$ and since $\partial^k \eta \in \BB^r$ whenever $\eta \in \BB^{r+|k|}$, the reconstruction theorem ensures that $\partial^k \cR f$ satisfies such a bound.

Let us now set $\xi^{(k)}(x) = k! f_k(x)$. Since $x\mapsto \xi^{(k)}(x)$ belongs to $L^p(\R^d)$, it defines a distribution on $\R^d$. Furthermore, we have the identity
\begin{equ}
\big(\xi^{(k)} -\partial^k\Pi_x f(x)\big)(y) = k! \cQ_k\big(f(y)-\Gamma_{y,x} f(x)\big)\;.
\end{equ}
Since $f\in\cD^\gamma_{p,q}(\bar T)$, we deduce that (\ref{Eq:PartialkR}) holds for our choice of $\xi^{(k)}$ and consequently, $\xi^{(k)}$ coincides with $\partial^k \cR f$.

To show that $\cR f \in \cB^\gamma_{p,q}$, we first note that
the first bound of \eqref{Eq:BoundDefBesovPos} with $\xi = f_0$ follows immediately from the fact that 
$f_0 \in L^p$ by the definition of $\cB^\gamma_{p,q}$.
The second bound with $\xi = \cR f$ on the other hand follows immediately from
\eqref{Eq:BoundRecons} since $\scal{\Pi_x f(x),\eta_x^\lambda} = 0$ for $\eta \in \BB^r_{\lfloor \gamma\rfloor}(\R^d)$.
\end{proof}

The second lemma constructs the continuous inverse of $\cR$.
\begin{lemma}\label{Prop:Inverse}
There exists a continuous injection $\iota$ from $\cB^\gamma_{p,q}$ into $\cD^\gamma_{p,q}(\bar{\cT})$ such that $\cR\iota \xi = \xi$ for all $\xi\in\cB^\gamma_{p,q}$.
\end{lemma}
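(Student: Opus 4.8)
The plan is to build $\iota$ directly from the wavelet/local-average characterisation of $\cB^\gamma_{p,q}$ and the equivalence $\cD^\gamma_{p,q} \cong \bar\cD^\gamma_{p,q}$ of Theorem~\ref{Th:EqDgamma}. Given $\xi \in \cB^\gamma_{p,q}$, one first notes by Lemma~\ref{Lemma:BesovFunction} (applicable since $\gamma > 0$) that $\xi$ is a genuine $L^p$ function, and more generally that all derivatives $\partial^k \xi$ for $|k|_\s < \gamma$ are, after the classical embedding $\cB^\gamma_{p,q} \hookrightarrow \cB^{|k|_\s +\epsilon}_{p,q}$, honest $L^p$ functions for suitable $\epsilon>0$; actually the cleanest route is to use the wavelet expansion of $\xi$. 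Concretely, I would define $\iota\xi$ by setting its $X^k$-component at base point $x$ to be $\tfrac1{k!}(\partial^k\xi)(x)$ — but since $\partial^k \xi$ need not be a pointwise-defined function for $|k|_\s$ close to $\gamma$, the correct device is to work with the local averages: set
\begin{equ}
\aver{f}{n}_k(x) = \frac{1}{k!}\int_{B(x,2^{-n})} 2^{n|\s|}\, \partial^k\big(\xi^{(n)}\big)(y)\, dy\;,
\end{equ}
where $\xi^{(n)}(y) = \langle \xi, \rho^{2^{-n}}_y\rangle$ is the mollification from Lemma~\ref{Lemma:BesovFunction}, and let $\aver{f}{n}(x) = \sum_{|k|_\s < \gamma} \aver{f}{n}_k(x) X^k/k!$. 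One then checks that this sequence lies in $\bar\cD^\gamma_{p,q}(\bar\cT)$, which by Theorem~\ref{Th:EqDgamma} produces an element $\iota\xi \in \cD^\gamma_{p,q}(\bar\cT)$.

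The verification of the three bounds of Definition~\ref{Def:Dgammabar} for this sequence is where the real work is, and it amounts to a Besov-space computation done through the wavelet characterisation of Proposition~\ref{PropCharact}. For the polynomial structure, $\Gamma_{x+h,x}$ acts on $X^k$ by the usual Taylor-shift, so $\aver{f}{n}(x+h) - \Gamma_{x+h,x}\aver{f}{n}(x)$, projected onto $X^\ell$, is exactly the $\ell$-th remainder of a Taylor expansion of (a mollified) $\partial^\ell\xi$ of order $\gamma - |\ell|_\s$, and the required $\ell^p_n$–$\ell^q$ bound is precisely the statement that $\partial^\ell \xi \in \cB^{\gamma - |\ell|_\s}_{p,q}$, which follows from $\xi\in\cB^\gamma_{p,q}$ by differentiating the wavelet coefficients. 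The consistency bound compares mollification scales $2^{-n}$ and $2^{-(n+1)}$ and reduces to the same kind of estimate as in the proof of Lemma~\ref{Lemma:BesovFunction}. I expect this step — translating the a priori Besov regularity of $\xi$ into the $\bar\cD^\gamma_{p,q}$ bounds through the local-average/wavelet machinery, uniformly in the truncation level — to be the main obstacle, though it is conceptually routine once one commits to expressing everything via Proposition~\ref{PropCharact}.

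It remains to check $\cR\iota\xi = \xi$ and that $\iota$ is a continuous injection. Continuity is built into the estimates above: $\$\iota\xi\$ \lesssim \|\xi\|_{\cB^\gamma_{p,q}}$. For $\cR\iota\xi = \xi$, note that for the polynomial structure one has $\Pi_x(\iota\xi)(x) = \sum_{|k|_\s<\gamma} \tfrac1{k!}(\partial^k\xi)(x)(\cdot - x)^k$, the Taylor polynomial of $\xi$ about $x$; then $\langle \xi - \Pi_x(\iota\xi)(x), \eta^\lambda_x\rangle$ is a Taylor remainder which, by the estimates already established (equivalently by Lemma~\ref{Lemma:ReconsPoly} applied to $f = \iota\xi$, giving $\cR f = f_0 = \xi$), satisfies the bound \eqref{Eq:BoundRecons} characterising $\cR$; by the uniqueness clause of Theorem~\ref{Th:Reconstruction} this forces $\cR\iota\xi = \xi$. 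Injectivity of $\iota$ is then immediate since $\cR\iota = \mathrm{id}$. Finally, combining with Lemma~\ref{Lemma:ReconsPoly} (which gives $\iota\cR f = f$ on $\cD^\gamma_{p,q}(\bar\cT)$, because both sides have the same $X^k$-components $\tfrac1{k!}\partial^k\cR f$), one concludes that $\cR$ and $\iota$ are mutually inverse, proving Proposition~\ref{Prop:OnetoOne}.
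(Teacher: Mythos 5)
Your overall strategy matches the paper's: build a sequence of local averages from $\xi$, show it lies in $\bar\cD^\gamma_{p,q}(\bar\cT)$, pass through Theorem~\ref{Th:EqDgamma} to get $\iota\xi\in\cD^\gamma_{p,q}(\bar\cT)$, and then use Lemma~\ref{Lemma:ReconsPoly} together with Lemma~\ref{Lemma:BesovFunction} and the uniqueness part of Theorem~\ref{Th:Reconstruction} to conclude $\cR\iota\xi=\xi$. However, there is a genuine gap in the choice of $\aver{f}{n}_k(x)$: your consistency bound fails whenever $\gamma>2$.

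To see it, write your definition as a pairing $\aver{f}{n}_k(x)=\langle\partial^k\xi,\Phi^{n,x}_k\rangle$ with $\Phi^{n,x}_k=\tfrac1{k!}\int_{B(x,2^{-n})}2^{n|\s|}\rho^{2^{-n}}_y\,dy$. For the translation bound you are actually fine: since $\Phi^{n,x+h}_k(z)=\Phi^{n,x}_k(z-h)$, the translation remainder is literally a Taylor remainder of $\Phi^{n,x}_k$, which annihilates polynomials of scaled degree $\le\lfloor\gamma\rfloor-|k|_\s$, and the bound follows from $\partial^k\xi\in\cB^{\gamma-|k|_\s}_{p,q}$. But the consistency test function $\Phi^{n,x}_k-\Phi^{n+1,x}_k$ does \emph{not} annihilate polynomials beyond degree one. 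A direct moment computation (in $d=1$, $k=0$, $\rho$ even) gives
\begin{equ}
\int(z-x)^2\,\Phi^{n,x}_0(z)\,dz=\Big(2\!\int\! w^2\rho(w)\,dw+\tfrac23\Big)\,2^{-2n}\;,
\end{equ}
so $\int(z-x)^2\bigl(\Phi^{n,x}_0-\Phi^{n+1,x}_0\bigr)(z)\,dz$ is a nonzero constant times $2^{-2n}$. Consequently $\langle\xi,\Phi^{n,x}_0-\Phi^{n+1,x}_0\rangle$ can only decay like $2^{-2n}$, not like $2^{-n\gamma}$, and the consistency bound of Definition~\ref{Def:Dgammabar} fails for $\gamma>2$. (Your reduction of this bound to ``the same kind of estimate as in Lemma~\ref{Lemma:BesovFunction}'' is also too weak: that lemma only exploits $\xi\in\cB^\epsilon_{p,\infty}$ for small $\epsilon$, giving a rate $2^{-n\epsilon}$, nowhere near $2^{-n\gamma}$.)

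The reason the paper's construction works is that it does \emph{not} average the raw derivatives; it first applies the transport $\Gamma_{x,y}$ and only then averages, which after an integration by parts produces the test functions $P^{\lfloor\gamma\rfloor}_{k,x}(\rho^n,\cdot)=\sum_{\ell:|k+\ell|\le\lfloor\gamma\rfloor}(-1)^\ell\partial^\ell_y\bigl(\rho^n_x(y)\tfrac{(x-y)^\ell}{k!\ell!}\bigr)$. The extra derivative terms in $\ell$ are precisely what ensures (via the recursion in the paper) that $P^q_{k,x}(\rho^n,\cdot)$ kills all monomials $(y-x)^m$ with $0<|m+k|_\s\le q$, so that both $\Phi^q_k$ and $\Psi^q_k$ have the right annihilation order. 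Your formula is the ``naive'' special case $\ell=0$ of this sum and therefore lacks exactly the corrections needed for the consistency estimate.
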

\begin{proof}
Let $\rho:\R^d\rightarrow\R_+$ be a smooth, even function, supported in the unit ball of $\R^d$, that integrates to $1$. For simplicity, we write $\rho^n_x(y)$ instead of $\rho^{2^{-n}}_x(y)$. Let $\xi\in\cB^\gamma_{p,q}$. For every $n\geq 0$, every $x\in\Lambda_n$ and every $k\in\N^d$ such that $|k|<\gamma$, we set
\begin{equ}
\aver{f}{n}_k(x) = \langle \partial^k \xi , P^{\lfloor \gamma \rfloor}_{k,x}(\rho^{n},\cdot)\rangle\;,
\end{equ}
where
\begin{equ}
P^q_{k,x}(\eta,y) = \sum_{\ell\in\N^d:|k+\ell| \leq q} (-1)^\ell \partial^\ell_y\Big(\eta(y-x) \frac{(x-y)^\ell}{k!\ell !}\Big)\;.
\end{equ}
for any $q\in\N$, any $k\in\N^d$ such that $|k|\leq q$ and any smooth function $\eta$. This definition of $\bar{f}$ may not seem obvious at first sight, but it can actually be guessed easily from (\ref{Eq:ffbar}) upon replacing $2^{n|\s|} \tun_{B(x,2^{-n})}$ by $\rho^n_x$, combined with the action of $\Gamma_{x,y}$ on the polynomial regularity structure.

We aim at showing that $\bar{f} \in \bar{\cD}^\gamma_{p,q}$. The local bound is easy to check since
\begin{equ}
\Big\| \big| \aver{f}{0} (x) \big|_\zeta \Big\|_{\ell^p_0} \lesssim \|\xi\|_{B^\gamma_{p,q}}\;.
\end{equ}
Regarding the translation and consistency bounds, we introduce for all $h\in\cE_n$, all $x\in\Lambda_n$ and all $n\geq 0$ the functions
\begin{equ}
\Psi^q_k:y\mapsto P^{q}_{k,x+h}(\rho^{n},y) - \sum_{\ell\in\N^d:|k+\ell| \leq q} (-h)^\ell\frac{(k+\ell)!}{k! \ell!}\partial^\ell_y P^{q}_{k+\ell,x}(\rho^{n},y)\;,
\end{equ}
and
\begin{equ}
\Phi^q_k:y\mapsto P^{q}_{k,x}(\rho^{n},y)-P^{q}_{k,x}(\rho^{n+1},y)\;.
\end{equ}
These functions have been defined so that the following two identities hold
\begin{equs}
\cQ_k\big(\aver{f}{n}(x) - \aver{f}{n+1}(x)\big) &= \langle \partial^k \xi , \Phi^{\lfloor \gamma\rfloor}_k \rangle\;,\\
\cQ_k\Big(\aver{f}{n}(x+h) - \Gamma_{x+h,x}\aver{f}{n}(x)\Big) &= \langle \partial^k \xi , \Psi^{\lfloor \gamma\rfloor}_k \rangle\;.
\end{equs}
Both $\Phi^q_k$ and $\Psi^q_k$ are smooth functions, compactly supported in a ball centred at $x$ and of radius of order $2^{-n}$. Assume that they both annihilate all polynomials of scaled degree lower than $q-|k|_\s$ and recall that $\partial^k \xi$ belongs to $\cB^{\gamma-|k|}_{p,q}$. We then easily obtain the translation and consistency bounds by applying Definition \ref{Def:Besov}.

It therefore remains to prove that $\Phi^q_k$ and $\Psi^q_k$ do indeed annihilate polynomials 
of degree $q-|k|_\s$. First of all, a simple integration by parts ensures that
\begin{equ}
\int_y P^{\lfloor \gamma\rfloor}_{k,x}(\rho^{n},y) dy = \frac1{k!} \int_y \rho^{n}_{x}(y) dy = \frac1{k!} \int_y \rho(y) dy\;,
\end{equ}
so that $\Phi^q_k$ and $\Psi^q_k$ annihilate constants.
Second, we prove by recursion on $q$ that the following property holds true. For every $k\in\N^d$, the function $y\mapsto P^{q}_{k,x}(\rho^{n},y)$ kills all monomials $(y-x)^m$ with $m\in\N^d$, $m\ne 0$ and $|m+k|_\s \leq q$. Once this property is established, one easily deduces that $\Phi^q_k$ annihilates all polynomials with a scaled degree which is non-zero and lower than $q-|k|_\s$. A similar recursion yields the desired property for $\Psi^q_k$, which is left to the reader. 

First, we check that the property is true at rank $q=|k|_\s+1$. Take $m\in \N^d$ such that $|m|_\s=1$. We have
\begin{equs}
{}&k!\int_y (y-x)^m P^{q}_{k,x}(\rho^{n},y) dy\\
&= \int_y (y-x)^m \rho^n_x(y) dy + \sum_{\ell\in\N^d: |\ell|=1} \int_y (y-x)^m (-1)^\ell \partial_y^\ell\Big((x-y)^\ell \rho^n_x(y) \Big) dy\\
&= \int_y (y-x)^m \rho^n_x(y) dy + \sum_{\ell\in\N^d: |\ell|=1} \int_y \partial_y^\ell((y-x)^m) (x-y)^\ell \rho^n_x(y) dy\;.
\end{equs}
Since $|\ell|_\s = |m|_\s = 1$, the only non-zero contribution in the second term on the right hand side  comes from $\ell=m$. Hence, the sum of the two terms vanishes and the property is true at rank $q=|k|_\s+1$. Assume now that it holds at rank $q-1$, for some $q\geq |k|_\s+2$. Observe that
\begin{equ}
P^{q}_{k,x}(\rho^{n},y) = P^{q-1}_{k,x}(\rho^{n},y) + \sum_{\ell\in\N^d:|k+\ell|=q} (-1)^\ell \partial^\ell_y\Big(\rho^n_x(y) \frac{(x-y)^\ell}{k!\ell !}\Big)\;.
\end{equ}
By the recursion hypothesis, we know that the first term on the right kills $(y-x)^m$ for all $m\in\N^d$ such that $|k|_\s<|m+k|_\s<q$. A simple integration by parts then shows that the second term satisfies the same property. Furthermore, for all $m\in\N^d$ such that $|m+k|_\s=q$, an integration by parts yields (notice that we indeed impose $\ell \le m$ in the sum at the second line, and not $|\ell| \le |m|$)
\begin{equs}
\int_y (y-x)^m P^{q}_{k,x}(\rho^{n},y) dy &= \sum_{\ell\in\N^d:|k+\ell| \leq q} \int_y (y-x)^m (-1)^\ell \partial^\ell_y\Big(\rho^n_x(y) \frac{(x-y)^\ell}{\ell ! k!}\Big) dy\\
&= \sum_{\ell\in\N^d:\ell \leq m} \int_y (y-x)^m \rho^n_x(y)  (-1)^\ell \frac{m!}{\ell ! (m-\ell)! k!} dy\\
&=\frac1{k!} \int_y (y-x)^m \rho^n_x(y) dy \prod_{i=1}^d \sum_{\ell_i\in\N:\ell_i \leq m_i} (-1)^{\ell_i} {{m_i}\choose{\ell_i}}\;,
\end{equs}
which vanishes by the binomial formula, thus completing the proof of the recursion.

We have shown that $\bar{f}\in\bar{\cD}^\gamma_{p,q}(\bar\cT)$. Applying the second part of Theorem \ref{Th:EqDgamma}, we obtain an element $f\in\cD^\gamma_{p,q}(\bar{\cT})$ and we naturally set $\iota\xi :=f$. A careful look at the proof of the theorem yields that $f_0(x)$ is the limit in $L^p(dx)$ of the sequence $\cQ_0\Gamma_{x,x_n} \aver{f}{n}(x_n)$, where $x_n$ is the nearest point of $x$ on the grid $\Lambda_n$. Lemma \ref{Lemma:ReconsPoly} ensures that $\cR f(x) = f_0(x)$ in $L^p(dx)$. Furthermore, by Lemma \ref{Lemma:BesovFunction} we know that $\xi(x)$ is the limit in $L^p(dx)$ of the sequence $\langle \xi,\rho^n_x\rangle$. Consequently, to prove the identity $\cR\iota\xi = \xi$, it suffices to show that $\langle \xi,\rho^n_x\rangle-\cQ_0\Gamma_{x,x_n} \aver{f}{n}(x_n)$ converges to $0$ in $L^p$ as $n\rightarrow\infty$. We observe that
\begin{equ}
\langle \xi,\rho^n_x\rangle-\cQ_0\Gamma_{x,x_n} \aver{f}{n}(x_n) = \langle \xi,\phi^n_x\rangle\;,
\end{equ}
where
\begin{equ}
\phi^n_x(y) = \rho^n_x(y) - \sum_{\ell\in\N^d:|\ell|<\gamma}(x_n-x)^\ell \partial^\ell_y P^{\lfloor\gamma\rfloor}_{\ell,x_n}(\rho^n,y)\;,
\end{equ}
which is a smooth function, supported in a ball of order $2^{-n}$ around $x$, with a scaling behaviour of order $2^{-n}$ and that kills the constants. Since $\xi \in \cB^\epsilon_{p,\infty}$ for some $\epsilon\in (0,1)$, the definition of that space ensures that the $L^p$ norm of $\langle \xi,\phi^n_x\rangle$ vanishes as $n\rightarrow\infty$, thus concluding the proof.
\end{proof}

\subsection{Proof of the reconstruction theorem}

We start with a convergence criterion in $\cB^\alpha_{p,q}$ with $\alpha < 0$, which is an adaptation of \cite[Thm~3.23]{Hairer2014}. Recall the wavelet analysis introduced in Section \ref{Section:Wavelet}. For a sequence of values $A^n_x, x\in \Lambda_n$, consider the distribution
\begin{equ}
\xi_n = \sum_{x\in\Lambda_n} A^n_x \phi^n_x \in V_n\;,
\end{equ}
and set $\delta A^n_x = \langle \xi_{n+1}-\xi_n , \phi^n_x\rangle$.
\begin{proposition}\label{Prop:SewingNeg}
Let $\alpha < 0$ and $\gamma>0$. Assume that
\begin{equs}\label{Eq:CondSewing}
\sup_{n\geq 0} \Big\| \frac{A^n_x}{2^{-n\alpha-n\frac{|\s|}{2}}} \Big\|_{\ell^p_n} \lesssim 1 \;,\quad
\bigg\| \Big\| \frac{\delta A^n_x}{2^{-n\gamma-n\frac{|\s|}{2}}} \Big\|_{\ell^p_n} \bigg\|_{\ell^q} \lesssim 1 \;.\\
\end{equs}
Then, as $n\rightarrow\infty$, $\xi_n\rightarrow \xi$ in $\cB^{\bar \alpha}_{p,q}$ for all $\bar \alpha < \alpha$. Furthermore, when $q=\infty$ the limit $\xi$ belongs to $\cB^{\alpha}_{p,q}$.
\end{proposition}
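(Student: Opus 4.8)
The plan is to work with the wavelet characterisation of $\cB^\alpha_{p,q}$ from Proposition~\ref{PropCharact} (and Remark~\ref{RkCharact}) and to read off the wavelet coefficients of the limit directly from the telescoping identity $\xi=\xi_0+\sum_{n\geq0}(\xi_{n+1}-\xi_n)$. Since $\xi_{n+1}-\xi_n\in V_{n+1}$, I would decompose it as $\cP_n(\xi_{n+1}-\xi_n)+\cP_n^\perp\xi_{n+1}$; orthonormality of the basis gives $\cP_n(\xi_{n+1}-\xi_n)=\sum_{y\in\Lambda_n}\delta A^n_y\,\varphi^n_y$ and $\cP_n^\perp\xi_{n+1}=\cP_n^\perp(\xi_{n+1}-\xi_n)$. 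Pairing with the wavelet basis, and using that $\psi^m_x\in V_{m+1}\subseteq V_n$ — hence $\cP_n^\perp\xi_{n+1}\perp\psi^m_x$ — whenever $n>m$, one obtains
\begin{equs}
\langle\xi,\varphi^0_x\rangle&=A^0_x+\sum_{n\geq0}\sum_{y\in\Lambda_n}\delta A^n_y\,\langle\varphi^n_y,\varphi^0_x\rangle\;,\\
\langle\xi,\psi^m_x\rangle&=\langle\cP_m^\perp\xi_{m+1},\psi^m_x\rangle+\sum_{n>m}\sum_{y\in\Lambda_n}\delta A^n_y\,\langle\varphi^n_y,\psi^m_x\rangle\;,
\end{equs}
for $x\in\Lambda_0$, resp.\ $x\in\Lambda_m$. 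It then suffices to check that these coefficients satisfy summability conditions in the spirit of~(\ref{Eq:CondWave}); the converse part of Proposition~\ref{PropCharact} produces $\xi$, and the same estimates applied to the tails $\sum_{n\geq N}$ give the convergence $\xi_N\to\xi$.

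For the off-diagonal sums I would invoke the elementary wavelet estimates underlying the proof of Proposition~\ref{PropCharact}: for $n\geq m$ the inner products $\langle\varphi^n_y,\psi^m_x\rangle$ (and $\langle\varphi^n_y,\varphi^0_x\rangle$ for $m=0$) vanish unless $\|y-x\|_\s\lesssim2^{-m}$, so there are $O(2^{(n-m)|\s|})$ non-zero terms for a fixed $x$, and a Taylor expansion of the coarser function around $y$ yields $|\langle\varphi^n_y,\psi^m_x\rangle|\lesssim2^{-(n-m)|\s|/2}$. Combining this with Jensen's inequality on the sum over $y$ (exactly as in that proof) gives
\begin{equ}
\Big\|\sum_{y\in\Lambda_n}\delta A^n_y\,\langle\varphi^n_y,\psi^m_x\rangle\Big\|_{\ell^p_m}\lesssim2^{(n-m)|\s|/2}\|\delta A^n\|_{\ell^p_n}=2^{-m|\s|/2}\,2^{-n\gamma}c_n\;,
\end{equ}
where $c_n:=2^{n\gamma+n|\s|/2}\|\delta A^n\|_{\ell^p_n}$ satisfies $\|(c_n)_n\|_{\ell^q}\lesssim1$ by hypothesis. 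Summing over $n>m$ and applying Young's inequality for the discrete convolution with the $\ell^1$-sequence $(2^{-k\gamma})_{k\geq1}$ (this uses $\gamma>0$), the contribution of the second terms above is, at scale $2^{-m\gamma-m|\s|/2}$, bounded in $\ell^q$-in-$m$; the series in the first line is likewise finite in $\ell^p_0$. In other words, this part of $\xi$ belongs to $\cB^\gamma_{p,q}$.

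For the diagonal terms, I would expand $\cP_m^\perp\xi_{m+1}=\cP_m^\perp(\xi_{m+1}-\xi_m)$ in the basis of $V_{m+1}$, whose coefficients are $A^{m+1}_z-\langle\xi_m,\varphi^{m+1}_z\rangle$, and use that $\langle\varphi^{m+1}_z,\psi^m_x\rangle$ and $\langle\varphi^m_y,\varphi^{m+1}_z\rangle$ are finitely supported bounded refinement coefficients. This gives $\big\|\langle\cP_m^\perp\xi_{m+1},\psi^m_x\rangle\big\|_{\ell^p_m}\lesssim\|A^{m+1}\|_{\ell^p_{m+1}}+\|A^m\|_{\ell^p_m}\lesssim2^{-m\alpha-m|\s|/2}$, but only as a bound uniform in $m$: the hypothesis controls $A^n$ only through a supremum over $n$, so no $\ell^q$-in-$m$ gain is available. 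Consequently the coefficients of $\xi$ verify~(\ref{Eq:CondWave}) at exponent $\alpha$ with $q=\infty$, so $\xi\in\cB^\alpha_{p,\infty}$ — this is the final assertion of the proposition. For general $q$ and any $\bar\alpha<\alpha$, the diagonal part now verifies~(\ref{Eq:CondWave}) at exponent $\bar\alpha$ since $\sum_{m\geq0}2^{-mq(\alpha-\bar\alpha)}<\infty$, while the off-diagonal part is even in $\cB^\gamma_{p,q}\subseteq\cB^{\bar\alpha}_{p,q}$; hence $\xi\in\cB^{\bar\alpha}_{p,q}$. The convergence $\xi_N\to\xi$ then follows by applying the same two estimates to $\xi-\xi_N$: for $m\geq N$ its $\psi^m_x$-coefficient equals $\langle\xi,\psi^m_x\rangle$ (since $\xi_N\in V_N\perp\psi^m_x$) and contributes $\sum_{m\geq N}2^{-mq(\alpha-\bar\alpha)}\to0$ (resp.\ $\sup_{m\geq N}2^{-m(\alpha-\bar\alpha)}\to0$ when $q=\infty$), while for $m<N$ and for the $\varphi^0_x$-coefficient it equals the tail $\sum_{n\geq N}\sum_y\delta A^n_y\langle\varphi^n_y,\cdot\rangle$ of the convergent series from the previous paragraph, which tends to $0$ as $N\to\infty$ because $\gamma>\bar\alpha$.

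The step I expect to be the main obstacle is the careful bookkeeping forced by the coexistence of two scales: the ``high-regularity'' contribution carried by $\delta A^n$ lives at scale $\gamma$ and is summable across dyadic levels, whereas the ``low-regularity'' contribution carried by $\cP_m^\perp\xi_{m+1}$ is only bounded at scale $\alpha$ and only in $\ell^\infty$ across levels. One must organise the wavelet expansion so that the loss is confined entirely to the latter term and materialises precisely as the gap $\bar\alpha<\alpha$ (with no loss when $q=\infty$), together with the routine but slightly delicate passages between $\ell^1$ and $\ell^p$ sums over the grids $\Lambda_n$ via Jensen's and Hölder's inequalities.
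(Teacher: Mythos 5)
Your proof is correct and follows essentially the same route as the paper: you use the same decomposition $\xi_{n+1}-\xi_n=\cP_n(\xi_{n+1}-\xi_n)+\cP_n^\perp\xi_{n+1}$ into a $V_n$-part carried by $\delta A^n$ and a $V_n^\perp$-part carried by $A^{n+1}$ (the paper's $g_n$ and $\delta\xi_n$), the same inner-product estimates, and the same identification of the loss $\bar\alpha<\alpha$ with the diagonal $V_m^\perp$-term, which is only bounded uniformly in $m$ at scale $\alpha$. The only difference is cosmetic bookkeeping: you sum over $n$ at a fixed wavelet level $m$ to read off the coefficients of the limit and then invoke the converse direction of Proposition~\ref{PropCharact}, whereas the paper first bounds $\|g_n\|_{\cB^{\bar\alpha}_{p,q}}$ and $\|\delta\xi_n\|_{\cB^{\bar\alpha}_{p,q}}$ for each $n$ and then sums over $n$.
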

\begin{proof}
For every $n\geq 0$, we write $\xi_{n+1}-\xi_n = g_n + \delta \xi_n$ where $g_n\in V_n$ and $\delta \xi_n \in V_n^\perp$, where $V_n^\perp$ is defined as the orthogonal complement of $V_n$ in $V_{n+1}$. We treat separately the contributions of these two terms. We start with $g_n$. For all $n\geq 0$, we have
\begin{equs}\label{Eq:Projgn}
\| g_n \|_{\cB^{\bar\alpha}_{p,q}} = \Big\| \langle g_n,\phi^0_x\rangle \Big\|_{\ell^p_0} + \sum_{\psi\in\Psi}\bigg\|\Big\|\frac{\langle g_n , \psi^m_x\rangle}{2^{-m(\bar\alpha+\frac{|\s|}{2})}} \Big\|_{\ell^p_m} \bigg\|_{\ell^q(m\geq 0)}\;.
\end{equs}
Notice that, whenever $m\geq n$, the corresponding terms in the right hand side vanish. On the other hand, for every $m<n$, we observe that $\big| \langle \phi^n_y , \psi^m_x\rangle \big| \lesssim 2^{-(n-m){|\s| / 2}}$ uniformly over all $x,y$. Actually this inner product vanishes as soon as $\|y-x\|_{\s} > C 2^{-m}$ for some constant $C>0$ that depends on the size of the supports of $\phi,\psi$. Using Jensen's inequality at the second line, we thus get
\begin{equs}
\Big\|\frac{\langle g_n , \psi^m_x\rangle}{2^{-m(\bar\alpha+\frac{|\s|}{2})}} \Big\|_{\ell^p_m} &\lesssim \Big\| \sum_{\substack{y\in\Lambda_n\\\|y-x\|_\s\leq C 2^{-m}}}\frac{|\delta A^n_y|}{2^{-m(\bar\alpha+|\s|)}} 2^{-n\frac{|\s|}{2}} \Big\|_{\ell^p_m}\\
&\lesssim 2^{-n\gamma+m\bar\alpha} \Big(\sum_{x\in\Lambda_m}\sum_{\substack{y\in\Lambda_n\\\|y-x\|_\s \leq C 2^{-m}}} 2^{-n|\s|} \Big| \frac{\delta A^n_y}{2^{-n(\gamma+\frac{|\s|}{2})}}  \Big|^p\Big)^{\frac{1}{p}}\\
&\lesssim 2^{-n\gamma+m\bar\alpha} \Big(\sum_{y\in\Lambda_n} 2^{-n|\s|} \Big| \frac{\delta A^n_y}{2^{-n(\gamma+\frac{|\s|}{2})}}  \Big|^p\Big)^{\frac{1}{p}}\;,
\end{equs}
uniformly over all $m<n$. Recall that $\bar\alpha < 0$. We obtain
\begin{equ}
\bigg\|\Big\|\frac{\langle g_n , \psi^m_x\rangle}{2^{-m(\bar\alpha+\frac{|\s|}{2})}} \Big\|_{\ell^p_m} \bigg\|_{\ell^q(m\geq 0)} \lesssim 2^{-n\gamma} \Big\| \frac{\delta A^n_x}{2^{-n\gamma-n\frac{|\s|}{2}}} \Big\|_{\ell^p_n}\;,
\end{equ}
uniformly over all $n\geq 0$. Similar calculations yield the same bound for the first term on the right hand side of (\ref{Eq:Projgn}). Consequently, using H\"older's inequality with $q$ and its conjugate exponent $\bar q$, we have
\begin{equs}
\Big\| \sum_{n_0 \leq n \leq n_1} g_n \Big\|_{\cB^{\bar\alpha}_{p,q}} &\leq \sum_{n_0 \leq n \leq n_1}\| g_n \|_{\cB^{\bar\alpha}_{p,q}}\\
&\lesssim \bigg\|\Big\| \frac{\delta A^n_x}{2^{-n\gamma-n\frac{|\s|}{2}}} \Big\|_{\ell^p_n}\bigg\|_{\ell^q(n_0 \leq n \leq n_1)} \Big\| 2^{-n\gamma} \Big\|_{\ell^{\bar q}(n_0 \leq n \leq n_1)}\\
&\lesssim 2^{-n_0\gamma}\;,
\end{equs}
so that $\sum_{n\geq 0} g_n$ converges in $\cB^{\bar\alpha}_{p,q}$. Notice that one only needs $\bar\alpha < 0$ for the arguments to apply.

We turn to $\delta \xi_n$. By Proposition \ref{PropCharact} and since $\delta \xi_n \in V_n^\perp$, we have
\begin{equ}
\Big\| \sum_{n_0 \leq n \leq n_1} \delta \xi_n \Big\|_{\cB^{\bar\alpha}_{p,q}} =  \Big(\sum_{n_0 \leq n \leq n_1} \| \delta \xi_n \|_{\cB^{\bar\alpha}_{p,q}}^q \Big)^{\frac{1}{q}}\;.
\end{equ}
Using Jensen's inequality at the second line, we get
\begin{equs}
\| \delta \xi_n \|_{\cB^{\bar\alpha}_{p,q}} &\lesssim \sum_{\psi\in\Psi}\Big\| \frac{\langle \delta \xi_n , \psi^n_x \rangle}{2^{-n(\bar\alpha+\frac{|\s|}{2})}}\Big\|_{\ell^p_n}\\
&\lesssim \Big\| \sum_{\substack{y\in\Lambda_{n+1}\\\|y-x\|_\s \leq C 2^{-n}}}\frac{|A^{n+1}_y|}{2^{-n(\bar\alpha+\frac{|\s|}{2})}}\Big\|_{\ell^p_n}\\
&\lesssim \Big( \sum_{x\in\Lambda_n}\sum_{\substack{y\in\Lambda_{n+1}\\\|y-x\|_\s \leq C 2^{-n}}} 2^{-n|\s|} \Big(\frac{|A^{n+1}_y|}{2^{-n(\bar\alpha+\frac{|\s|}{2})}}\Big)^p \Big)^{\frac{1}{p}}\\
&\lesssim 2^{-n(\alpha - \bar\alpha)}\Big(\sum_{y\in\Lambda_{n+1}} 2^{-(n+1)|\s|} \Big(\frac{|A^{n+1}_y|}{2^{-(n+1)(\alpha+\frac{|\s|}{2})}}\Big)^p \Big)^{\frac{1}{p}} \;,
\end{equs}
uniformly over all $n\geq 0$. Therefore, as soon as $\bar\alpha < \alpha$ we get
\begin{equs}
\Big(\sum_{n_0 \leq n \leq n_1} \| \delta \xi_n \|_{\cB^{\bar\alpha}_{p,q}}^q \Big)^{\frac{1}{q}} &\lesssim \bigg( \sum_{n_0 \leq n \leq n_1+1} 2^{-n(\alpha-\bar\alpha)q} \Big\| \frac{A^n_x}{2^{-n(\alpha+\frac{|\s|}{2})}} \Big\|_{\ell^p_n}^q \bigg)^{\frac{1}{q}}\\
&\lesssim 2^{-n_0(\alpha-\bar\alpha)} \sup_{n_0 \leq n \leq n_1+1} \Big\| \frac{A^n_x}{2^{-n(\alpha+\frac{|\s|}{2})}} \Big\|_{\ell^p_n}\;,
\end{equs}
so that $\sum_{n\geq 0} \delta \xi_n$ converges in $\cB^{\bar\alpha}_{p,q}$. In the particular case $q=\infty$, we have
\begin{equ}
\Big\| \sum_{n \leq n_1} \delta \xi_n \Big\|_{\cB^{\alpha}_{p,q}} \lesssim \sup_{n\leq n_1}\Big\|  \delta \xi_n \Big\|_{\cB^{\alpha}_{p,q}}
\lesssim \sup_{n\geq 0} \Big\| \frac{ A^n_x}{2^{-n(\alpha+\frac{|\s|}{2})}} \Big\|_{\ell^p_n}\;,
\end{equ}
so that the limit belongs to $\cB^{\alpha}_{p,\infty}$.
\end{proof}
Before we proceed to the proof of the reconstruction theorem, let us introduce the following notation. Let $L^q_{n_0}$ be the space of all measurable functions $g:(2^{-n_0-1}, 2^{-n_0}]\rightarrow \R$ such that
$$ \Big(\int_{(2^{-n_0-1}, 2^{-n_0}]} |g(\lambda)|^q \,\frac{d\lambda}{\lambda}\Big)^{\frac1{q}} < \infty\;.$$
Observe that for any function $g:(0,1]\rightarrow\R$ we have the identity $\big\| g \big\|_{L^q_\lambda} = \big\| \big\| g \big\|_{L^q_{n_0}} \big\|_{\ell^q(n_0\geq 0)}$. 
\begin{proof}[of Theorem \ref{Th:Reconstruction}]
From now on, the symbol $\zeta$ is implicitly taken in the set of homogeneities $\cA$, and we omit to write the corresponding sum over all $\zeta\in \cA$ in order to alleviate the notations. Let $f\in \cD^\gamma_{p,q}$ and take $\aver{f}{}$ as defined in (\ref{Eq:ffbar}). For all $n\geq 0$ and $x\in\Lambda_n$, we set
\begin{equ}
A^n_x := \langle \Pi_x \aver{f}{n}(x) , \phi^n_x \rangle\;\;,
\end{equ}
and we define $\cR_n f = \sum_{x\in\Lambda_n} A^n_x \phi^n_x$.\\

\noindent\textit{Step 1: convergence.} We show that $\cR_n f$ converges to an element $\cR f$ in $\cB^{\bar \alpha}_{p,q}$ for all $\bar\alpha < \alpha\wedge 0$. To that end, let us check that the conditions of Proposition~\ref{Prop:SewingNeg} 
are satisfied with our present choice of $A^n_x$'s and with $\alpha$ replaced by $\alpha':=\alpha\wedge(-\epsilon)$ for some arbitrary $\epsilon>0$. The first condition of (\ref{Eq:CondSewing}) is a direct consequence of the local bound on $\bar f$. To obtain the second bound, we write
\begin{equs}\label{Eq:deltaAn}
\delta A^n_x = \sum_{y\in\Lambda_{n+1}} \big\langle \Pi_y\big(\aver{f}{n+1}(y) - \Gamma_{y,x} \aver{f}{n}(x)\big) , \varphi^{n+1}_y\big\rangle \langle \varphi^{n+1}_y,\varphi^n_x\rangle\;,
\end{equs}
so that
\begin{equs}
\bigg\|\Big\| \frac{\delta A^n_x}{2^{-n\gamma-n\frac{|\s|}{2}}} \Big\|_{\ell^p_n}\bigg\|_{\ell^q} \lesssim \bigg\|\Big\| \sum_{\substack{y\in\Lambda_{n+1}\\\|y-x\|_\s \leq C 2^{-n}}} \frac{\big|\aver{f}{n+1}(y) - \Gamma_{y,x} \aver{f}{n}(x)\big|_\zeta}{2^{-n(\gamma-\zeta)}} \Big\|_{\ell^p_n}\bigg\|_{\ell^q}\;,
\end{equs}
which is bounded by a term of order $\$ \bar f\$$ thanks to Remark~\ref{Rmk:Consistency}. Therefore, as claimed, $\cR_n f$ converges to some element $\cR f$ which belongs to $\cB^{\bar\alpha}_{p,q}$ for any $\bar\alpha < \alpha'=\alpha\wedge(-\epsilon)$, and therefore, for any $\bar\alpha < \alpha \wedge 0$.\\

\noindent\textit{Step 2: reconstruction bound.} Let us now show the bound (\ref{Eq:BoundRecons}). Given $\lambda\in (0,1]$, let $n_0$ be the largest integer such that $2^{-n_0} \geq \lambda$. Recall that $\cP_{n}$ is the projection onto $V_n$ and $\cP^\perp_n$ the projection onto $V_n^\perp$ (the orthogonal complement of $V_n$ in $V_{n+1}$). For every $n_0\geq 0$, we will use the following decomposition
\begin{equ}\label{Eq:DecompoBoundRecons}
\cR f - \Pi_x f(x)= \big(\cR_{n_0} f - \cP_{n_0}\Pi_x f(x)\big) + \sum_{n\geq n_0} \big(\cR_{n+1} f - \cR_n f -\cP^\perp_n \Pi_x f(x)\big) \;.
\end{equ}
We treat separately the contributions of the two terms on the right hand side. We start with the contribution on $V_{n_0}$:
\begin{equs}
{}&\cR_{n_0} f - \cP_{n_0}\Pi_x f(x)\\
&= \sum_{y\in \Lambda_{n_0}} \big( A^{n_0}_y - \langle \Pi_x f(x) , \phi^{n_0}_y \rangle \big) \phi^{n_0}_y\\
&= \sum_{y\in \Lambda_{n_0}} \int_{z\in B(y,2^{-n_0})} 2^{n_0|\s|} \big\langle \Pi_z(f(z)-\Gamma_{z,x} f(x) , \phi^{n_0}_y \big\rangle dz\, \phi^{n_0}_y\;.
\end{equs}
There exists $C>0$ such that, uniformly over all $n_0\geq 0$, we have
\begin{equs}
{}& \bigg\|\Big\|\sup_{\eta\in\BB^r}\frac{\big|\langle \cR_{n_0} f - \cP_{n_0}\Pi_x f(x) , \eta^\lambda_x \rangle\big|}{\lambda^\gamma} \Big\|_{L^p(dx)} \bigg\|_{L^q_{n_0}}\\
&\lesssim \bigg\|\Big\| \sum_{\substack{y\in \Lambda_{n_0}\\\|y-x\|_\s \leq C \lambda}}\int_{z\in B(x,C 2^{-n_0})} 2^{n_0|\s|} \frac{|f(z)-\Gamma_{z,x} f(x)|_{\zeta} }{\|z-x\|_\s^{\gamma-\zeta}}dz  \Big\|_{L^p(dx)}\bigg\|_{L^q_{n_0}}\\
&\lesssim \Big\| \int_{h\in B(0,C 2^{-n_0})} 2^{n_0|\s|} \frac{|f(x+h)-\Gamma_{x+h,x} f(x)|_{\zeta} }{\|h\|_\s^{\gamma-\zeta}} dh \Big\|_{L^p(dx)}\\
&\lesssim \int_{h\in B(0,C 2^{-n_0})} 2^{n_0|\s|} \Big\| \frac{|f(x+h)-\Gamma_{x+h,x} f(x)|_{\zeta} }{\|h\|_\s^{\gamma-\zeta}} \Big\|_{L^p(dx)} dh\;.
\end{equs}
Using Jensen's inequality, we deduce that the $\ell^q(n_0\geq 0)$-norm of the last expression is bounded by a term of order $\$f\$$ as required.

We now treat the second term of (\ref{Eq:DecompoBoundRecons}). To that end, we write $\cR_{n+1} f - \cR_n f = g_n + \delta f_n$ where $g_n \in V_n$ and $\delta f_n \in V_n^\perp$. Then, we have
\begin{equs}
{}&\langle \delta f_n - \cP_n^\perp \Pi_x f(x) , \eta^\lambda_x \rangle \\
&= \sum_{y\in\Lambda_{n+1}} \sum_{z\in\Lambda_n} \big(A^{n+1}_y - \langle \Pi_x f(x) , \phi^{n+1}_y \rangle \big) \langle \phi^{n+1}_y , \psi^n_z\rangle \langle \psi^n_z , \eta^\lambda_x\rangle\;.
\end{equs}
We have $|\langle \phi^{n+1}_y , \psi^n_z\rangle| \lesssim 1$ and $|\langle \psi^n_z , \eta^\lambda_x\rangle| \lesssim 2^{-n(\frac{|\s|}{2}+r)} \lambda^{-(|\s|+r)}$ uniformly over all the parameters. To get the second bound, we used the fact that $\psi$ annihilates polynomials of any order up to $r$. Actually, the first, resp.~second, inner product vanishes as soon as $\|y-z\|_\s \leq C 2^{-n}$, resp.~$\|z-x\|_\s \leq C \lambda$, for some constant $C>0$ depending on the sizes of the supports of $\phi,\psi$. Given the expression of $A^{n+1}_y$, some simple calculations yield the existence of $C'>0$ such that
\begin{equs}
{}& \bigg\|\Big\|\sup_{\eta\in\BB^r}\frac{\big|\langle \delta f_n - \cP_n^\perp \Pi_x f(x) , \eta^\lambda_x \rangle\big|}{\lambda^\gamma} \Big\|_{L^p(dx)} \bigg\|_{L^q_{n_0}}\\
&\lesssim \bigg\|\Big\| \sum_{\substack{z\in\Lambda_n\\ \|z-x\|_\s \leq C\lambda}} \sum_{\substack{y\in\Lambda_{n+1}\\\|y-z\|_\s\leq C 2^{-n}}} \frac{|A^{n+1}_y - \langle \Pi_x f(x) , \phi^{n+1}_y \rangle|}{\lambda^{\gamma+r+|\s|}} 2^{-n(\frac{|\s|}{2}+r)} \Big\|_{L^p(dx)} \bigg\|_{L^q_{n_0}} \\
&\lesssim \bigg\|\Big\| \!\!\!\sum_{\substack{y\in\Lambda_{n+1}\\\|y-x\|_\s\leq 2C \lambda}}\!\!\! \int_{u\in B(y,2^{-(n+1)})}\!\!\!\! 2^{(n+1)|\s|}|f(u)-\Gamma_{u,x} f(x)|_\zeta\frac{2^{-n(|\s|+\zeta+r)}}{2^{-n_0(\gamma+r+|\s|)}} \Big\|_{L^p(dx)} \bigg\|_{L^q_{n_0}} \\
&\lesssim \Big\| \int_{h\in B(0,C' 2^{-n_0})}2^{n_0|\s|}\frac{|f(x+h)-\Gamma_{x+h,x} f(x)|_\zeta}{\|h\|_\s^{\gamma-\zeta}} dh  \Big\|_{L^p(dx)}2^{-(n-n_0)(\zeta+r)} \;,
\end{equs}
uniformly over all $n_0$. Since $r>|\alpha|$, the sum over all $n\geq n_0$ of the last expression converges. Then, taking the $\ell^q(n_0)$-norm, one gets
\begin{equs}
{}& \bigg\|\Big\| \sum_{n\geq n_0}\frac{\langle \delta f_n - \cP_n^\perp \Pi_x f(x) , \eta^\lambda_x \rangle}{\lambda^\gamma} \Big\|_{L^p(dx)} \bigg\|_{L^q_{\lambda}}\\
&\lesssim \bigg\|\int_{h\in B(0,C' 2^{-n_0})}2^{n_0|\s|}\Big\| \frac{|f(x+h)-\Gamma_{x+h,x} f(x)|_\zeta}{|h|^{\gamma-\zeta}}  \Big\|_{L^p(dx)}dh \bigg\|_{\ell^q(n_0)}\\
&\lesssim \bigg(\int_{h\in B(0,1)}\Big\| \frac{|f(x+h)-\Gamma_{x+h,x} f(x)|_\zeta}{|h|^{\gamma-\zeta}}  \Big\|_{L^p(dx)}^q \frac{dh}{|h|^{|\s|}} \bigg)^{\frac1{q}}\;,
\end{equs}
as required.

Finally we treat the contribution of $g_n$. First, using (\ref{Eq:deltaAn}) we have
\begin{equ}
|\langle g_n , \eta^\lambda_x \rangle| \lesssim \sum_{\substack{y\in\Lambda_n\\\|y-x\|_\s\leq C\lambda}}\sum_{\substack{z\in\Lambda_{n+1}\\\|z-y\|_\s\leq C2^{-n}}} 2^{-(n-n_0)|\s|-n\zeta} |\aver{f}{n+1}(z)-\Gamma_{z,y} \aver{f}{n}(y)|_\zeta\;,
\end{equ}
uniformly over all $n\geq n_0 \geq 0$, and all $x\in\R^d$. Therefore, the triangle inequality at the second line and Jensen's inequality at the third line yield
\begin{equs}
{}&\bigg\|\Big\| \sum_{n\geq n_0}\sup_{\eta\in\BB^r}\frac{\big|\langle g_n , \eta^\lambda_x \rangle\big|}{\lambda^\gamma}\Big\|_{L^p}\bigg\|_{L^q_{n_0}}\\
&\lesssim \sum_{n\geq n_0} 2^{n_0\gamma} \Big\| \sum_{\substack{y\in\Lambda_n\\\|y-x\|_\s\leq C2^{-n_0}}}\sum_{\substack{z\in\Lambda_{n+1}\\\|z-y\|_\s\leq C2^{-n}}}  2^{-(n-n_0)|\s|-n\zeta} |\aver{f}{n+1}(z)-\Gamma_{z,y} \aver{f}{n}(y)|_\zeta \Big\|_{L^p} \\
&\lesssim \sum_{n\geq n_0} 2^{-(n-n_0)\gamma} \sum_{h\in \cE^C_{n+1}} \Big(\sum_{y\in\Lambda_n} 2^{-n|\s|} \Big(\frac{|\aver{f}{n+1}(y+h)-\Gamma_{y+h,y} \aver{f}{n}(y)|_\zeta}{2^{-n(\gamma-\zeta)}}\Big)^p \Big)^{\frac1{p}} \;,
\end{equs}
uniformly over all $n_0$. Therefore, using Jensen's inequality, we get
\begin{equs}
{}&\bigg\|\bigg\|\Big\| \sum_{n\geq n_0}\frac{\langle g_n , \eta^\lambda_x \rangle}{\lambda^\gamma}\Big\|_{L^p}\bigg\|_{L^q_{n_0}}\bigg\|_{\ell^q(n_0\geq 0)}\\
&\lesssim \bigg(\sum_{n_0\geq 0} \sum_{n\geq n_0} 2^{-(n-n_0)\gamma} \sum_{h\in \cE^C_n} \Big\| \frac{|\aver{f}{n+1}(x+h)-\Gamma_{x+h,x} \aver{f}{n}(x)|_\zeta}{2^{-n(\gamma-\zeta)}}\Big\|_{\ell^p_{n}}^q\bigg)^{\frac1{q}}\\
&\lesssim \bigg(\sum_{n\geq 0} \sum_{h\in \cE^C_n}\Big\| \frac{|\aver{f}{n+1}(x+h)-\Gamma_{x+h,x} \aver{f}{n}(x)|_\zeta}{2^{-n(\gamma-\zeta)}}\Big\|_{\ell^p_{n}}^q\bigg)^{\frac1{q}}\;,
\end{equs}
which is bounded by the norm of $f$. This concludes the proof of the reconstruction bound.\\

\noindent\textit{Step 3: improved regularity.} So far, we have showed that $\cR f$ belongs to $\cB^{\bar{\alpha}}_{p,q}$ with $\bar{\alpha} < \alpha \wedge 0$. When $\alpha$ is negative, this is the regularity that we are aiming for while in the other case this is worse than what the statement of the theorem asserts. 
However, the reconstruction bound (\ref{Eq:BoundRecons}), that we established at the second step, allows one to recover the asserted regularity. Indeed, recall that when $\alpha > 0$ the regularity structure only contains non-negative homogeneities. Then, we write
\begin{equ}\label{Eq:RfPos}
\langle \cR f,\eta^\lambda_x \rangle = \langle \cR f - \Pi_x f(x),\eta^\lambda_x \rangle + \langle \Pi_x f(x),\eta^\lambda_x \rangle\;.
\end{equ}
It is easy to check that the first bound of (\ref{Eq:BoundDefBesovPos}) is satisfied. Regarding the second bound, it is satisfied by the first term on the right hand side of (\ref{Eq:RfPos}) thanks to the reconstruction bound (\ref{Eq:BoundRecons}). To show that the second term also satisfies the required bound, we distinguish two cases. Either we work with the polynomial regularity structure and then $\langle \Pi_x f(x),\eta^\lambda_x \rangle = 0$ as soon as $\eta$ kills polynomials. Or the lowest level with non-integer homogeneity is $\alpha$ and in that case
\begin{equ}
\bigg\| \Big\| \frac{\langle \Pi_x f(x),\eta^\lambda_x \rangle}{\lambda^{\bar\alpha}} \Big\|_{L^p} \bigg\|_{L^q_\lambda} \lesssim \Big\| |f(x)|_\alpha \Big\|_{L^p}\;,
\end{equ}
for all $\bar\alpha < \alpha$, as required.\\

\noindent\textit{Step 4: two models.} In the case where we deal with two models, the bounds above can be easily adapted in order to establish (\ref{Eq:BoundRecons2}). For instance, using obvious notations for elements built from the second model $(\bar{\Pi},\bar{\Gamma})$, we have
\begin{equs}
{}&A^{n+1}_y-\langle \Pi_{x} f(x),\phi^{n+1}_y \rangle - \bar{A}^{n+1}_y-\langle \bar{\Pi}_{x} \bar{f}(x),\phi^{n+1}_y \rangle \\
&= \int_{z\in B(y,2^{-(n+1)})}\!\!\!\! 2^{(n+1)|\s|} \langle \Pi_y \Gamma_{y,z}\big(f(z)-\Gamma_{z,x}f(x)\big)\\
&\qquad\qquad\qquad\qquad- \bar{\Pi}_y \bar{\Gamma}_{y,z}\big(\bar{f}(z)-\bar{\Gamma}_{z,x}\bar{f}(x)\big),\varphi^{n+1}_y\rangle dz\;.
\end{equs}
Then, we write
\begin{equs}\label{Eq:Decomp2models}
{}&\Pi_y \Gamma_{y,z}\big(f(z)-\Gamma_{z,x}f(x)\big)- \bar{\Pi}_y \bar{\Gamma}_{y,z}\big(\bar{f}(z)-\bar{\Gamma}_{z,x}\bar{f}(x)\big)\\
&= \Pi_y \Gamma_{y,z}\big(f(z)-\Gamma_{z,x}f(x)-\bar{f}(z)+\bar{\Gamma}_{z,x}\bar{f}(x)\big)\\
&\quad+(\Pi_y-\bar{\Pi}_y) \Gamma_{y,z}\big(\bar{f}(z)-\bar{\Gamma}_{z,x}\bar{f}(x)\big)+\bar{\Pi}_y(\Gamma_{y,z}- \bar{\Gamma}_{y,z})\big(\bar{f}(z)-\bar{\Gamma}_{z,x}\bar{f}(x)\big)\;,
\end{equs}
and the bound follows from the same arguments as in the case with a single model.\\

\noindent\textit{Step 5: uniqueness.} Finally, let us prove the uniqueness of $\cR f$. Let $\xi_1,\xi_2$ be two distributions satisfying the bound (\ref{Eq:BoundRecons}), and let $\rho\in\BB^r$ be an even function that integrates to $1$. For any compactly supported, smooth function $\psi:\R^d\rightarrow\R$ and for any $\delta\in(0,1]$, set
\begin{equ}
\psi_\delta(y) = \langle \rho^\delta_y,\psi\rangle = \int \rho^\delta(x-y)\psi(x)dx\;.
\end{equ}
Let $n\geq 0$ be the largest integer such that $2^{-n} \geq \delta$. Then, we get
\begin{equs}
\frac{\big|\langle \xi_1-\xi_2,\psi_\delta\rangle\big|}{\delta^\gamma} &\lesssim \|\psi\|_\infty \Big(\int_x \Big(\frac{\big|\langle \xi_1-\xi_2,\rho^\delta_x\rangle \big|}{\delta^\gamma}\Big)^p dx\Big)^{\frac1{p}}\\
&\lesssim \|\psi\|_\infty \int_{\lambda=2^{-(n+1)}}^{2^{-n}} \Big(\int_x \sup_{\eta\in\BB^r} \Big(\frac{\big|\langle \xi_1-\xi_2,\eta^\lambda_x\rangle \big|}{\lambda^\gamma}\Big)^p dx\Big)^{\frac1{p}} \frac{d\lambda}{\lambda \ln 2}\\
&\lesssim \|\psi\|_\infty \bigg(\int_{\lambda=2^{-(n+1)}}^{2^{-n}} \Big(\int_x \sup_{\eta\in\BB^r} \Big(\frac{\big|\langle \xi_1-\xi_2,\eta^\lambda_x\rangle \big|}{\lambda^\gamma}\Big)^p dx\Big)^{\frac{q}{p}} \frac{d\lambda}{\lambda \ln 2}\bigg)^{\frac1{q}}\;,
\end{equs}
which goes to $0$ as $n\rightarrow\infty$, or equivalently as $\delta\rightarrow 0$, thanks to (\ref{Eq:BoundRecons}). Consequently, $\langle \xi_1-\xi_2,\psi_\delta\rangle$ vanishes when $\delta\rightarrow 0$. On the other hand, $\langle \xi_1-\xi_2,\psi_\delta\rangle$ converges to $\langle \xi_1-\xi_2,\psi\rangle$ as $\delta\rightarrow 0$. We deduce that $\langle \xi_1-\xi_2,\psi\rangle = 0$ for all compactly supported, smooth functions $\psi$, and therefore, $\xi_1=\xi_2$.
\end{proof}

\section{Embedding theorems}\label{Sec:Embeddings}

The spaces $\cD^\gamma_{p,q}$ enjoy embedding properties which are similar to the well-known embeddings of Besov spaces, see for instance the book of Triebel~\cite[Sec. 2.3.2 and 2.7.1]{Triebel}. Recall that we work under Assumption \ref{Assumption:Gamma}, so that all the $\gamma,\gamma'$ below are implicitly assumed not to lie in $\cA$.

We say that we are in the \textit{periodic case} when the model for our regularity structure is periodic on some torus of $\R^d$ in the sense of \cite[Def.~3.33]{Hairer2014}. Implicitly, we then restrict the spaces $\cD^\gamma$ to elements $f$ which satisfy the same periodicity and we restrict the integrals (in $x$) in the norms to one period.

\begin{theorem}\label{Th:Embedding}
Let $(\cA,\cT,\cG)$ be a regularity structure and $(\Pi,\Gamma)$ be a model. Let $p,p',q,q' \in [1,\infty]$ and $\gamma,\gamma' > 0$ be a collection of parameters. The space $\cD^\gamma_{p,q}$ is continuously embedded into $\cD^{\gamma'}_{p',q'}$ in any of the following settings:\begin{enumerate}
\item $q' > q$, $p'=p$ and $\gamma'=\gamma$,
\item $q' \le q$, $p'=p$ and $\gamma' < \gamma$,
\item $q'=q$, $p'<p$ and $\gamma'=\gamma$ in the periodic case,
\item $q'=q$, $p'>p$ and $\gamma' < \gamma - |\s|\big(\frac1{p} - \frac1{p'}\big)$.
\end{enumerate}
\end{theorem}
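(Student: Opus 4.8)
The plan is to transfer everything to the discrete spaces $\bar{\cD}^\gamma_{p,q}$ of Definition~\ref{Def:Dgammabar}, where all four embeddings reduce to elementary inequalities between $\ell^p_n$- and $\ell^q$-type norms, and then to come back using Theorem~\ref{Th:EqDgamma}. Concretely, given $f\in\cD^\gamma_{p,q}$, I would form its discrete average $\bar f\in\bar{\cD}^\gamma_{p,q}$ via \eqref{Eq:ffbar}, check that the projection of $\bar f$ onto $\cT_{<\gamma'}$ lies in $\bar{\cD}^{\gamma'}_{p',q'}$ with the corresponding norm controlled by $\$ f\$$, and finally invoke the converse part of Theorem~\ref{Th:EqDgamma} (applied with the parameters $\gamma',p',q'$) to recover an element of $\cD^{\gamma'}_{p',q'}$. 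Since that element and the projection $\cQ_{<\gamma'}f$ of $f$ onto $\cT_{<\gamma'}$ are both the almost-everywhere limit of the sequence $x\mapsto\Gamma_{x,x_n}\bar f^{(n)}(x_n)$ (which converges in both $L^p$ and $L^{p'}$), they coincide, so the embedding map is indeed $f\mapsto\cQ_{<\gamma'}f$. Assumption~\ref{Assumption:Gamma} applied to $\gamma'$ is what guarantees that $\cQ_{<\gamma'}f$ is a genuine element of $\cD^{\gamma'}_{p,q}$ to begin with. The only non-trivial ingredients are then two facts, both uniform over $n\geq 0$: (i) for $p'\geq p$ one has $\|u\|_{\ell^{p'}_n}\leq 2^{n|\s|(1/p-1/p')}\|u\|_{\ell^p_n}$, which follows from the pointwise bound $\max_x|u(x)|\leq 2^{n|\s|/p}\|u\|_{\ell^p_n}$; and (ii) in the periodic case, for $p'\leq p$ one has $\|u\|_{\ell^{p'}_n}\lesssim\|u\|_{\ell^p_n}$ by H\"older's inequality, the implicit constant depending only on the volume of one period since $2^{-n|\s|}$ times the number of points of $\Lambda_n$ in one period is a fixed constant independent of $n$.

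\textbf{Cases (1), (2) and (3).} In all three, $\gamma'$ and the $\ell^p_n$-structure are dealt with cheaply. For case (1) ($q'>q$, $\gamma'=\gamma$, $p'=p$) one simply uses the monotone inclusion $\ell^q\hookrightarrow\ell^{q'}$ on the countable index set of pairs $(n,h)$ with $h\in\cE_n$, so each of the three defining quantities of $\bar{\cD}^\gamma_{p,q'}$ is controlled by its $\bar{\cD}^\gamma_{p,q}$-counterpart. For case (2) ($q'\leq q$, $\gamma'<\gamma$, $p'=p$) one writes $2^{-n(\gamma-\zeta)}/2^{-n(\gamma'-\zeta)}=2^{-n(\gamma-\gamma')}$ with $\gamma-\gamma'>0$, inserts this summable factor, and applies H\"older's inequality in the $n$-variable with exponents $q/q'$ and its conjugate. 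For case (3) ($q'=q$, $\gamma'=\gamma$, $p'<p$, periodic) fact (ii) bounds every $\ell^{p'}_n$-norm occurring in the $\bar{\cD}^\gamma_{p',q}$-norm by the corresponding $\ell^p_n$-norm, uniformly in $n$, and the weights and the $\ell^q(n)$-structure are unchanged. (Cases (1)--(3) can equally well be obtained directly at the level of $\cD^\gamma_{p,q}$, using $\|h\|_\s^{\gamma-\zeta}\leq\|h\|_\s^{\gamma'-\zeta}$ on $B(0,1)$, the same H\"older argument in $h$, and fact (ii); it is only case (4) that genuinely needs the detour through $\bar{\cD}$.)

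\textbf{Case (4).} This is the Sobolev-type embedding. With $q'=q$, $p'>p$ and $\gamma'<\gamma-|\s|(1/p-1/p')$, fact (i) gives, for every $\zeta\in\cA_{\gamma'}$, every $n\geq 0$ and every $h$,
\[
\Bigl\|\tfrac{|\bar f^{(n)}(x+h)-\Gamma_{x+h,x}\bar f^{(n)}(x)|_\zeta}{2^{-n(\gamma'-\zeta)}}\Bigr\|_{\ell^{p'}_n}\leq 2^{n\theta}\,\Bigl\|\tfrac{|\bar f^{(n)}(x+h)-\Gamma_{x+h,x}\bar f^{(n)}(x)|_\zeta}{2^{-n(\gamma-\zeta)}}\Bigr\|_{\ell^{p}_n},\qquad \theta:=|\s|\bigl(\tfrac1p-\tfrac1{p'}\bigr)+\gamma'-\gamma<0,
\]
and likewise for the consistency bound, while the local bound sits at $n=0$ where $2^{0\cdot\theta}=1$. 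Since $\theta<0$ and $n\geq 0$ one has $2^{n\theta}\leq 1$, so the $\ell^q(n)$-sums (with the finite sum over $h\in\cE_n$) defining the $\bar{\cD}^{\gamma'}_{p',q}$-norm are bounded term by term by those of the $\bar{\cD}^\gamma_{p,q}$-norm; hence the discrete norm of $\bar f$ at parameters $(\gamma',p',q)$ is $\lesssim$ the one at $(\gamma,p,q)$, which is $\lesssim\$ f\$$ by Theorem~\ref{Th:EqDgamma}. It is exactly the exponent $\theta$ that dictates the hypothesis $\gamma'<\gamma-|\s|(1/p-1/p')$. Combining with the return step of the Strategy paragraph yields the claimed embedding.

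\textbf{Main obstacle.} With Theorem~\ref{Th:EqDgamma} at our disposal, none of the four cases is hard; the difficulty has been off-loaded onto the proof of that equivalence, which is precisely the ``main trick'' advertised in the introduction. Within the present argument the only delicate points are bookkeeping: ensuring that the averaging \eqref{Eq:ffbar} and the projection $\cQ_{<\gamma'}$ commute well enough that the ``return'' via Theorem~\ref{Th:EqDgamma} lands on $\cQ_{<\gamma'}f$ itself rather than on some unrelated element; and, in case (3), the uniformity in $n$ of the constant in $\ell^{p'}_n\hookleftarrow\ell^p_n$, which is where periodicity is indispensable (on all of $\R^d$ that inclusion fails). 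One must also keep track of the non-resonance Assumption~\ref{Assumption:Gamma} for $\gamma'$, without which the target space need not even contain $\cQ_{<\gamma'}f$ when $q'<\infty$.
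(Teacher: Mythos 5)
Your strategy (reduce to $\bar{\cD}$ via Theorem~\ref{Th:EqDgamma} and argue termwise) is the one the paper uses, and cases (1)--(3) are essentially correct (modulo the projection remark below, which in those cases is genuinely just bookkeeping). However, case (4) has a real gap.

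The issue is that the $\bar{\cD}^{\gamma'}_{p',q}$-norm is defined for the \emph{projection} $\cQ_{<\gamma'}\bar f$, not for $\bar f$ itself, and the projection does not commute with $\Gamma_{x+h,x}$. Writing $\zeta<\gamma'$ and letting $\beta\in\cA_\gamma$ with $\gamma'<\beta<\gamma$, one has
\begin{equs}
\cQ_\zeta\bigl(\cQ_{<\gamma'}\aver{f}{n}(x+h)-\Gamma_{x+h,x}\cQ_{<\gamma'}\aver{f}{n}(x)\bigr)
&= \cQ_\zeta\bigl(\aver{f}{n}(x+h)-\Gamma_{x+h,x}\aver{f}{n}(x)\bigr)\\
&\quad + \sum_{\gamma'<\beta<\gamma}\cQ_\zeta\Gamma_{x+h,x}\aver{f}{n}_\beta(x)\;,
\end{equs}
and your termwise estimate with $2^{n\theta}$ only controls the first piece. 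The second piece, after dividing by $2^{-n(\gamma'-\zeta)}$, is bounded by $2^{-n(\beta-\gamma')}\,|\aver{f}{n}(x)|_\beta$; since $\beta-\gamma'>0$ the $2^{-n(\beta-\gamma')}$ factor is summable in $n$, so what one needs is $\sup_{n}\|\,|\aver{f}{n}(x)|_\beta\|_{\ell^{p'}_n}<\infty$. But Lemma~\ref{Lemma:LocalBoundn} only gives this in $\ell^p_n$, and passing to $\ell^{p'}_n$ via your fact (i) introduces a \emph{growing} factor $2^{n|\s|(1/p-1/p')}$, so the uniform-in-$n$ bound does not follow. This is precisely the difficulty that the paper's proof of case (4) is organized around: it proceeds by a downward recursion over the levels $\zeta_1>\zeta_2>\cdots$ of $\cA_\gamma$, using the $\epsilon$-perturbed exponents $p^{(\epsilon)}_\zeta$ and a Gronwall-type argument (re-running the proof of Lemma~\ref{Lemma:LocalBoundn} at the new exponent, using the consistency bound and Lemma~\ref{Lemma:RegInt}) to establish $\sup_n\|\aver{f}{n}_{\zeta_1}\|_{\ell^{p^{(\epsilon)}}_n}\lesssim\$\bar f\$$, which then feeds the translation bound at the next level down, and so on. Your ``term by term'' comparison only proves the statement in the degenerate situation $\cA_{\gamma'}=\cA_\gamma$ (i.e.\ $[\gamma',\gamma)\cap\cA=\emptyset$, which the paper explicitly singles out as a strengthened variant); in general the cross-level contributions are the whole point, and the step ``so the discrete norm at $(\gamma',p',q)$ is $\lesssim$ the one at $(\gamma,p,q)$'' does not follow from what you wrote.

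As a smaller remark on case (2): the same projection terms appear, but there they are harmless because $p'=p$, so the required $\sup_n\|\aver{f}{n}_\beta\|_{\ell^p_n}$ is exactly Lemma~\ref{Lemma:LocalBoundn}. It would still be cleaner to state this, as the paper does, rather than leaving it implicit.
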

\begin{remark}
Actually, our proof will show a slightly stronger statement in the fourth case: if $\big[\gamma - |\s|\big(\frac1{p} - \frac1{p'}\big),\gamma\big) \cap \cA = \emptyset$ then one can take $\gamma' =  \gamma - |\s|\big(\frac1{p} - \frac1{p'}\big)$.
\end{remark}

\subsection{A consequence of the embedding theorem}\label{Subsec:PAM}

Let us explain how this theorem allows to recover the expected H\"older regularity of the solution to the parabolic Anderson model \eqref{Eq:PAM} on $\R^3$ that we constructed in~\cite{mSHE}.

The specificity of that construction was twofold. First, the space of modelled distributions considered therein micmics locally in space the $\cB_{p,\infty}$ space with $p$ close to $1$: as explained in the introduction, taking $p$ close to $1$ instead of equal to $\infty$ provides a much simpler framework for starting the equation from a Dirac. The definition of modelled distributions we opted for in that work is slightly different from (and also less natural than) the definition presented here. However, it can be checked that the whole construction would carry through with the present definition of modelled distributions. Second, the space of modelled distributions are weighted at infinity in space: this is a consequence of the fact that the H\"older norm of the white noise blows up on unbounded spaces, so that the H\"older norm of the solution cannot be bounded either. However, the weighted spaces of modelled distributions are locally identical to their unweighted versions: this does not have any influence on the H\"older regularity of the solution so we can disregard this aspect of the spaces in our analysis below.

Since the roughest term in the expansion of the solution of the parabolic Anderson model is H\"older $1/2-$ in space, the reconstruction theorem shows that the solution constructed in~\cite{mSHE} belongs, as a function of the space variable, to the classical Besov space $\cB^{1/2-}_{p,\infty}$. The latter space can be embedded (using the classical embedding theorem) into $\cB^{\beta}_{\infty,\infty}$ for any $\beta < 1/2 - 3/p$: this is far from the expected H\"older $1/2-$ regularity that one expects for the solution.

On the other hand, if we apply our embedding theorem at the level of the spaces of modelled distributions, we no longer play with the regularity of the roughest term in the expansion of the solution, but rather with the order of the expansion that we denote by $\gamma$. It turns out that this parameter can be taken as large as desired in the construction, so that Theorem \ref{Th:Embedding} yields an element in $\cD^{\gamma'}_{\infty,\infty}$ with $\gamma' = \gamma - 3/p > 0$. The reconstruction theorem applied to the latter space yields an element of $\cB^{1/2-}_{\infty,\infty}$, thus proving the H\"older $1/2-$ regularity of the solution.

\subsection{Proof of the embedding theorem}

Before we proceed to the proof of the theorem, we state an elementary lemma.

\begin{lemma}\label{Lemma:RegInt}
Let $1\leq p \leq \tilde{p} \leq \infty$ and $0 < \delta \leq \tilde{\delta}$ be such that
\begin{equ}
\tilde{\delta} \leq \delta - |\s| \Big(\frac1{p} - \frac1{\tilde{p}}\Big)\;.
\end{equ}
For all $n\geq 0$ and all $u:\Lambda_n \rightarrow \R$, we have the bound
\begin{equ}
\Big\| \frac{u(x)}{2^{-n\tilde\delta}} \Big\|_{\ell^{\tilde{p}}_n} \leq \Big\| \frac{u(x)}{2^{-n\delta}} \Big\|_{\ell^{p}_n}\;.
\end{equ}
\end{lemma}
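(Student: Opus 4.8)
The plan is to reduce everything to the elementary monotonicity of $\ell^p$-norms with respect to the \emph{plain} counting measure, and to keep careful track of the powers of $2^{-n|\s|}$ that distinguish $\ell^p_n$ from ordinary $\ell^p$. Concretely, observe that by definition
\begin{equ}
\big\| u(x) \big\|_{\ell^p_n} = 2^{-n|\s|/p}\Big(\sum_{x\in\Lambda_n} |u(x)|^p\Big)^{1/p}\;,
\end{equ}
so the $\ell^p_n$-norm is just $2^{-n|\s|/p}$ times the norm of the sequence $(u(x))_{x\in\Lambda_n}$ in the usual space $\ell^p$ built from counting measure on $\Lambda_n$ (with the convention that this is $\sup_x|u(x)|$ when $p=\infty$).

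The only analytic input is then the classical embedding: for $p\le\tilde p$ one has $\|v\|_{\ell^{\tilde p}}\le\|v\|_{\ell^p}$ for every sequence $v$, with the usual conventions at $p$ or $\tilde p$ equal to $1$ or $\infty$. Applying this to $v=(u(x))_{x\in\Lambda_n}$ and reinserting the scaling factors gives
\begin{equ}
\big\| u(x) \big\|_{\ell^{\tilde p}_n} = 2^{-n|\s|/\tilde p}\|v\|_{\ell^{\tilde p}} \le 2^{-n|\s|/\tilde p}\|v\|_{\ell^{p}} = 2^{n|\s|(1/p-1/\tilde p)}\,\big\| u(x) \big\|_{\ell^p_n}\;.
\end{equ}

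Multiplying both sides by $2^{n\tilde\delta}$ and using the hypothesis $\tilde\delta\le\delta-|\s|(1/p-1/\tilde p)$ together with $n\ge 0$ to conclude $2^{n(\tilde\delta+|\s|(1/p-1/\tilde p))}\le 2^{n\delta}$, we obtain
\begin{equ}
\Big\| \frac{u(x)}{2^{-n\tilde\delta}} \Big\|_{\ell^{\tilde{p}}_n} = 2^{n\tilde\delta}\big\| u(x) \big\|_{\ell^{\tilde p}_n} \le 2^{n\delta}\big\| u(x)\big\|_{\ell^p_n} = \Big\| \frac{u(x)}{2^{-n\delta}} \Big\|_{\ell^{p}_n}\;,
\end{equ}
which is the claimed bound. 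There is no real obstacle here; the only points requiring a modicum of care are the direction of the inequality $2^{na}\le 2^{nb}$ (which needs $n\ge 0$ and $a\le b$) and checking the two endpoint conventions when $p=1$ or $\tilde p=\infty$, both of which are immediate.
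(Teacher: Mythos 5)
Your proof is correct, and it is essentially the same argument as the paper's: both proofs reduce the claim to the scaling-corrected embedding $\big\| u \big\|_{\ell^{\tilde p}_n} \le 2^{n|\s|(1/p-1/\tilde p)}\big\| u \big\|_{\ell^p_n}$ and then absorb the $2^{n\tilde\delta}$, $2^{n\delta}$ factors using the hypothesis on $\delta,\tilde\delta$. The only difference is cosmetic: you invoke the classical unweighted embedding $\|v\|_{\ell^{\tilde p}} \le \|v\|_{\ell^p}$ after factoring out $2^{-n|\s|/p}$, whereas the paper re-derives the same intermediate inequality in place via the interpolation bound $\|w\|_{\ell^{\tilde p}_n}^{\tilde p}\le (\sup_x |w(x)|)^{\tilde p-p}\|w\|_{\ell^p_n}^p$ combined with $\sup_x|w(x)|\le 2^{n|\s|/p}\|w\|_{\ell^p_n}$; these are two ways of proving the identical estimate.
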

\begin{proof}
Assume that the right hand side is finite, otherwise there is nothing to prove. Then, we have
\begin{equ}
\sup_{x\in\Lambda_n} \Big|\frac{u(x)}{2^{-n\delta}}\Big| \leq 2^{n\frac{|\s|}{p}} \Big\| \frac{u(x)}{2^{-n\delta}} \Big\|_{\ell^{p}_n}\;.
\end{equ}
Therefore, we have
\begin{equs}
\Big\| \frac{u(x)}{2^{-n\tilde\delta}} \Big\|_{\ell^{\tilde{p}}_n}^{\tilde{p}} &\leq \Big\| \frac{u(x)}{2^{-n\delta}} \Big\|_{\ell^{p}_n}^p \sup_{x\in\Lambda_n} \Big|\frac{u(x)}{2^{-n\delta}}\Big|^{\tilde{p}-p} \frac1{2^{-n\tilde{p}(\tilde{\delta}-\delta)}} \\
&\leq \Big\| \frac{u(x)}{2^{-n\delta}} \Big\|_{\ell^{p}_n}^{\tilde{p}} 2^{n(\frac{\tilde{p}}{p}-1)|\s|+n\tilde{p}(\tilde{\delta}-\delta)}\;.
\end{equs}
By assumption, $\big(\frac{\tilde{p}}{p}-1\big)|\s|+\tilde{p}(\tilde{\delta}-\delta)\leq0$ and the asserted bound follows.
\end{proof}

\begin{proof}[of Theorem \ref{Th:Embedding}]
The cases 1, 2 and 4 will be established at the level of the spaces $\bar{\cD}$: this is sufficient since Theorem~\ref{Th:EqDgamma} then implies the same for the spaces $\cD$.

\medskip\noindent\textit{First case.} At the level of the spaces $\bar{\cD}$, this embedding is a direct consequence of the continuous embedding of $\ell^{q'}(\N)$ into $\ell^q(\N)$ whenever $q' > q$.

\medskip\noindent\textit{Second case.}  It suffices to show that the translation and consistency bounds hold upon replacing $q$ by $q'$, $\gamma$ by $\gamma'$ and $\bar{f}$ by its restriction to $\cT_{<\gamma'}$. The proof is exactly the same for the translation and the consistency bounds, so we only present the details for the former. First observe that for all $\epsilon > 0$ and all $1\leq q' \le q \leq \infty$, by H\"older's inequality we have
\begin{equ}
\Big(\sum_{n\geq 0}\sum_{h\in\cE_n} \Big(\frac{|u_n(h)|}{2^{n\epsilon}}\Big)^{q'}\Big)^{\frac1{q'}} \leq \Big(\sum_{n\geq 0}\sum_{h\in\cE_n} |u_n(h)|^{q}\Big)^{\frac1{q}} \Big(\sum_{n\geq 0} \sum_{h\in\cE_n} 2^{-n\epsilon\frac{q q'}{q-q'}}\Big)^{\frac{q-q'}{qq'}}\;.
\end{equ}
Then, we fix $\zeta \in \cA_{\gamma'}$. For any $n\geq 0$ and any $h\in\cE_n$, we set
\begin{equ}
u_n(h) = \bigg\| \frac{\big| \aver{f}{n}(x+h)-\Gamma_{x+h,x} \aver{f}{n}(x) \big|_\zeta}{2^{-n(\gamma-\zeta)}} \bigg\|_{\ell^p_n}\;.
\end{equ}
Applying the above inequality with $\epsilon = \gamma-\gamma'$, one immediately gets
\begin{equ}\label{Eq:qq'}
\Bigg(\sum_{n\geq 0}\sum_{h\in\cE_n} \bigg\| \frac{\big| \aver{f}{n}(x+h)-\Gamma_{x+h,x} \aver{f}{n}(x) \big|_\zeta}{2^{-n(\gamma'-\zeta)}} \bigg\|_{\ell^p_n}^{q'}\Bigg)^{\frac1{q'}} \lesssim \$ f\$\;.
\end{equ}
This yields the desired embedding in the case where $\gamma' \in (\max \cA_\gamma,\gamma)$. If $\gamma' < \max \cA_\gamma$, the restriction of $\aver{f}{n}$ to $\cT_{<\gamma'}$ differs from $\aver{f}{n}$ by $\sum_{\gamma' < \beta < \gamma} \aver{f}{n}_\beta$. Then, uniformly over all $n\geq 0$, we have
\begin{equs}
{}&\Bigg(\sum_{n\geq 0}\sum_{h\in\cE_n} \bigg\| \frac{\big| \Gamma_{x+h,x}\sum_{\gamma'<\beta < \gamma} \aver{f}{n}_\beta(x) \big|_\zeta}{2^{-n(\gamma'-\zeta)}} \bigg\|_{\ell^p_n}^{q'}\Bigg)^{\frac1{q'}}\\
&\lesssim \sum_{\gamma' < \beta < \gamma}  \Bigg(\sum_{n\geq 0} \Big\| \big|\aver{f}{n}(x)\big|_\beta 2^{-n(\beta-\gamma')} \Big\|_{\ell^p_n}^{q'}\Bigg)^{\frac1{q'}}\\
&\lesssim \sup_{\beta\in\cA_\gamma}\sup_{n\geq 0} \Big\| \big| \aver{f}{n} (x) \big|_\beta \Big\|_{\ell^p_n}\;,
\end{equs}
which implies, together with (\ref{Eq:qq'}), the desired embedding at the level of $\bar\cD$.

\medskip\noindent\textit{Third case.} This follows from the continuous embedding of $L^p$ into $L^{p'}$ whenever the underlying space is bounded.

\medskip\noindent\textit{Fourth case.} Here again, we prove the embedding at the level of the spaces $\bar\cD$. Let $\zeta_1 > \zeta_2 > \ldots$ be the elements of $\cA_{\gamma}$ in the decreasing order. For all $\epsilon \geq 0$ and all $\zeta<\gamma$, we let $p_\zeta^{(\epsilon)} \in [p,\infty]$ be such that
\begin{equ}
\zeta + \epsilon = \gamma - |\s| \Big(\frac1{p} - \frac1{p_\zeta^{(\epsilon)}}\Big)\;,
\end{equ}
if this admits a solution, otherwise we set $p_\zeta^{(\epsilon)}=\infty$. In the particular case $\epsilon = 0$, we set $p_\zeta=p_\zeta^{(0)}$.

The core of the proof relies on the following two properties:\begin{enumerate}
\item[1)] For all $\gamma'\in (\zeta_1,\gamma)$, let $p''\in[p,\infty]$ be such that $\gamma' = \gamma - |\s| \Big(\frac1{p}-\frac1{p''}\Big)$ if this admits a solution, otherwise let $p''=\infty$. Then, $\bar{f} \in \bar{\cD}^{\gamma'}_{p'',q}$.
\item[2)] For all $\gamma'\in(\zeta_2,\zeta_1)$, we have $\bar{f} \in \bar{\cD}^{\gamma'}_{p_{\zeta_1},q}$.
\end{enumerate}
Once these properties are established, an elementary recursion on $\zeta_i$ concludes the proof of the embedding. We are left with proving these two properties.

Property 1) is a direct consequence of Lemma \ref{Lemma:RegInt} and of the continuous inclusion $\ell^p_0 \subset \ell^{p''}_0$. To prove Property 2), we actually show:\begin{enumerate}
\item[2')] For all $\epsilon >0$ and all $\gamma''\in(\zeta_2,\zeta_1)$, we have $\bar{f} \in \bar{\cD}^{\gamma''}_{p_{\zeta_1}^{(\epsilon)},q}$.
\end{enumerate}
Indeed, if 2') is satisfied then Lemma \ref{Lemma:RegInt} and the continuous inclusion $\ell^{p_{\zeta_1}^{(\epsilon)}}_0 \subset \ell^{p_{\zeta_1}}_0$ ensure that $\bar{f} \in \bar{\cD}^{\gamma''(\epsilon)}_{p_{\zeta_1},q}$ where $\gamma''(\epsilon) = \gamma'' - |\s| \Big(\frac1{p_{\zeta_1}^{(\epsilon)}}-\frac1{p_{\zeta_1}}\Big)$. Since $\gamma''(\epsilon) \uparrow \gamma''$ as $\epsilon\downarrow 0$, we deduce that Property 2) holds.

Let us show 2'). Fix $\gamma''\in(\zeta_2,\zeta_1)$. Let $\bar{f}_{<\zeta_1}$ be the restriction of $\bar{f}$ to $\cT_{<\zeta_1}$. For all $\zeta < \zeta_1$, we have
\begin{equs}
\big| \aver{f}{n}_{<\zeta_1}(x+h)-\Gamma_{x+h,x} \aver{f}{n}_{<\zeta_1}(x) \big|_\zeta &\leq \big| \aver{f}{n}(x+h)-\Gamma_{x+h,x} \aver{f}{n}(x) \big|_\zeta\\
&\quad+ \big|\Gamma_{x+h,x} \aver{f}{n}_{\zeta_1}(x) \big|_\zeta\;.
\end{equs}
Since $\gamma'' < \zeta_1 +\epsilon$, Lemma \ref{Lemma:RegInt} yields the bound
\begin{equ}
\bigg(\sum_{n\geq 0} \sum_{h\in \cE_n} \bigg\| \frac{\big| \aver{f}{n}(x+h)-\Gamma_{x+h,x} \aver{f}{n}(x) \big|_\zeta}{2^{-n(\gamma''-\zeta)}} \bigg\|^q_{\ell^{p_{\zeta_1}^{(\epsilon)}}_n} \bigg)^{\frac1{q}} \lesssim \$\bar{f}\$\;.
\end{equ}
On the other hand, we have
\begin{equs}
\bigg(\sum_{n\geq 0} \sum_{h\in \cE_n} \bigg\| \frac{\big|\Gamma_{x+h,x} \aver{f}{n}_{\zeta_1}(x) \big|_\zeta}{2^{-n(\gamma''-\zeta)}} \bigg\|^q_{\ell^{p_{\zeta_1}^{(\epsilon)}}_n} \bigg)^{\frac1{q}} &\lesssim \sup_{n\geq 0} \|\aver{f}{n}_{\zeta_1}\|_{\ell^{p_{\zeta_1}^{(\epsilon)}}_{n}} \Big(\sum_{n\geq 0} 2^{-n(\zeta_1-\gamma'')q}\Big)^{\frac1{q}}\\
&\lesssim \sup_{n\geq 0} \|\aver{f}{n}_{\zeta_1}\|_{\ell^{p_{\zeta_1}^{(\epsilon)}}_{n}}\;,
\end{equs}
so we only need to bound this last term. A careful inspection of the proof of 
Lemma~\ref{Lemma:LocalBoundn} shows that there exists $C>0$ such that
\begin{equ}
\|\aver{f}{n+1}_{\zeta_1}\|_{\ell^{p_{\zeta_1}^{(\epsilon)}}_{n+1}} \leq \|\aver{f}{n}_{\zeta_1}\|_{\ell^{p_{\zeta_1}^{(\epsilon)}}_{n}} + C 2^{-n\epsilon} \sum_{h\in \cE_{n+1}} \Big\| \frac{\big|\aver{f}{n} (x) - \Gamma_{x,x+h}\aver{f}{n+1}(x+h)\big|_{\zeta_1}}{2^{-n\epsilon}} \Big\|_{\ell^{p_{\zeta_1}^{(\epsilon)}}_n}\;,
\end{equ}
uniformly over all $n\geq 0$. Since $\zeta_1+\epsilon\leq \gamma-|\s|(\frac1{p}-\frac1{p_{\zeta_1}^{(\epsilon)}})$, Lemma \ref{Lemma:RegInt} yields the bound
\begin{equ}
\Big\| \frac{\big|\aver{f}{n} (x) - \aver{f}{n+1}(x+h)\big|_{\zeta_1}}{2^{-n\epsilon}} \Big\|_{\ell^{p_{\zeta_1}^{(\epsilon)}}_n} \leq \Big\| \frac{\big|\aver{f}{n} (x) - \Gamma_{x,x+h}\aver{f}{n+1}(x+h)\big|_{\zeta_1}}{2^{-n(\gamma-\zeta_1)}} \Big\|_{\ell^p_n}\;,
\end{equ}
so that, using Remark \ref{Rmk:Consistency}, we deduce that
\begin{equ}
\|\aver{f}{n+1}_{\zeta_1}\|_{\ell^{p_{\zeta_1}^{(\epsilon)}}_{n+1}} \leq \|\aver{f}{n}_\zeta\|_{\ell^{p_{\zeta_1}^{(\epsilon)}}_{n}} + C 2^{-n\epsilon} \$\bar{f}\$\;,
\end{equ}
for all $n\geq 0$. Consequently,
\begin{equ}
\sup_{n\geq 0} \|\aver{f}{n}_{\zeta_1}\|_{\ell^{p_{\zeta_1}^{(\epsilon)}}_{n}} \lesssim \$\bar{f}\$\;,
\end{equ}
thus concluding the proof of 2').
\end{proof}

\section{Convolution with singular kernels}

From now on, we assume that $d\geq 2$ and we view the first direction $x_1$ as time and the $d-1$ remaining ones as space. We consider a kernel $P:\R^d \rightarrow \R$ which is smooth except at the origin and which improves regularity by order $\beta >0$. More precisely, we assume that there exist some smooth functions $P_-$ and $P_n$, $n\geq 0$ on $\R^d$ such that:
\begin{enumerate}
\item For every $x\in\R^d\backslash\{0\}$, we have $P(x) = P_-(x) + \sum_{n\geq 0} P_n(x)$,
\item $P_0$ is supported in $B(0,1)$ and for every $n\geq 0$, we have the identity
\begin{equ}
P_n(x) = 2^{n(|\s|-\beta)} P_0(2^{n\s}x)\;,\quad x\in\R^d\;,
\end{equ}
where $2^{n\s}x = (2^{n\s_1}x_1,\ldots,2^{n\s_d}x_d)$.
\item The function $P_0$ annihilates all polynomials of scaled degree $r$.
\end{enumerate}

The second property ensures that for all $k\in\N^d$, there exists $C>0$ such that
\begin{equ}\label{Eq:ScalingPn}
\big| \partial^k P_n(x) \big| \leq C 2^{n(|\s|-\beta + |k|)}\;,
\end{equ}
uniformly over all $n\geq 0$ and all $x\in\R^d$.

A typical example of such a kernel is given by the heat kernel which, under the parabolic scaling $\s = (2,1,\ldots,1)$, satisfies these assumptions with $\beta=2$.
The celebrated Schauder estimates assert that for any distribution $\xi\in\cB^\alpha_{p,q}$ (that does not grow too fast at infinity), the distribution $P*\xi$ obtained by convolving $\xi$ with $P$ belongs to $\cB^{\alpha+\beta}_{p,q}$. Notice that the growth condition is only required because the kernel is not compactly supported. The core of the proof of the Schauder estimate concerns the convolution with the singular part $P_+ = \sum_{n\geq 0} P_n$ of the kernel for which no growth condition is required. These Schauder estimates are crucial for solving (stochastic) PDEs as they allow to obtain a fixed point.

The goal of the present section is to lift this result to our spaces $\cD^\gamma_{p,q}$. Actually, we will restrict ourselves to proving this result with $P$ replaced by $P_+$ since this is the only difficult part in the proof. The abstract convolution operator was defined in \cite[Sec.~4]{Hairer2014} in the H\"older setting, and the extension to the more general Besov setting does not present any major difference: in particular, the definition of the operator is formally the same. As we explained in the introduction, in the theory of regularity structures a function/distribution is described through a collection of generalised Taylor expansions, at each space-time point, on a given basis of monomials. Recall that this basis contains two types of elements: classical space-time monomials; and abstract monomials which are built from the driving noise. When convolving with a singular kernel, one expects two types of terms: classical space-time monomials, and abstract monomials obtained by convolving with the kernel the original ones. To give a precise meaning to the latter, as in \cite[Sec.~4]{Hairer2014}, we assume in this section that our regularity structure is equipped with an abstract integration map of order $\beta$, namely a linear map $\cI : \cT\rightarrow\cT$ such that:
\begin{enumerate}
\item $\cI : \cT_\zeta \rightarrow \cT_{\zeta+\beta}$,
\item $\cI \tau = 0$ for all $\tau \in \bar\cT$,
\item $\cI \Gamma \tau - \Gamma \cI \tau \in \bar\cT$ for all $\tau \in \cT$ and all $\Gamma\in\cG$.
\end{enumerate}
Second, we assume that our model is admissible in the sense that it satisfies the identity:
\begin{equ}[e:defPix]
\Pi_x \cI \tau(y) = \langle \Pi_x \tau , P_+(y-\cdot) \rangle - \sum_{k\in\N^d:|k| < \zeta +\beta} \frac{X^k}{k!} \langle \Pi_x \tau , \partial^k P_+(x-\cdot) \rangle\;,
\end{equ}
for all $\tau\in\cT_\zeta$ and all $\zeta \in \cA_\gamma$. (See again \cite[Sec.~4]{Hairer2014} for
a discussion of the meaning of this condition.)
Then, we introduce the linear operator
\begin{equs}
\cP^\gamma_+ f(x) &:= \cI(f(x)) + \sum_{\zeta\in\cA_\gamma} \sum_{k\in\N^d:|k|_\s < \zeta +\beta} \frac{X^k}{k!} \langle \Pi_x \cQ_\zeta f(x) , \partial^k P_+(x-\cdot) \rangle\\
&+ \sum_{k\in\N^d:|k|_\s < \gamma +\beta} \frac{X^k}{k!} \langle \cR f - \Pi_x f(x) , \partial^k P_+(x-\cdot)\rangle\;.
\end{equs}
Let us describe informally the three terms appearing in this expression. The first term takes values in the 
non-classical part of the regularity structure. Looking at \eqref{e:defPix}, we see that the corresponding
local expansion is obtained from that of $f$ itself by convolving each term with the singular kernel and 
then subtracting their classical Taylor expansions (at lower integer levels). Adding these Taylor expansions 
back yields the second term. The third term is the only non-local term in the definition of the operator: 
as we will see in the proof of the theorem below, this is precisely the required quantity ensuring that 
reconstruction and convolution commute and that a Schauder estimate holds.
\begin{theorem}\label{Th:Conv}
Consider a regularity structure equipped with an integration map of order $\beta > 0$ and
an admissible model.
Fix $\gamma \in \R_+\backslash\N$, and assume that $\gamma+\beta \notin\N$. Then $\cP^\gamma_+$ is a continuous linear map from $\cD^{\gamma}_{p,q}$ into $\cD^{\gamma+\beta}_{p,q}$ and we have the identity
\begin{equ}\label{Eq:ConvolRecons}
\cR \cP^\gamma_+ f = P_+ * \cR f\;,
\end{equ}
for all $f\in\cD^{\gamma}_{p,q}$. Furthermore, if $(\bar{\Pi},\bar{\Gamma})$ is another admissible model, then we have
\begin{equs}
\$ \cP^\gamma_+ f, \cP^\gamma_+ \bar{f} \$ \lesssim \|\Pi\| (1 + \|\Gamma\|) \$f,\bar{f}\$ + \big(\|\Pi-\bar{\Pi}\|(1+\|\bar{\Gamma}\|) + \|\bar{\Pi}\| \|\Gamma-\bar{\Gamma}\|\big)\$\bar{f}\$\;,
\end{equs}
uniformly over all models $(\Pi,\Gamma)$, $(\bar{\Pi},\bar{\Gamma})$, and all elements $f,\bar{f}$ in $\cD^\gamma_{p,q}$, $\bar{\cD}^\gamma_{p,q}$.
\end{theorem}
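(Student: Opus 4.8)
The plan is to follow the strategy of the proof of the Schauder estimate in \cite[Sec.~4]{Hairer2014}, the only genuinely new point being that all the a priori pointwise-in-$x$ estimates carried out there must now be performed in the $L^p$--$L^q_\lambda$ scale, with the reconstruction bound \eqref{Eq:BoundRecons} (and \eqref{Eq:BoundRecons2}) serving as the only non-local input. As in \cite[Sec.~4]{Hairer2014} one writes $\cP^\gamma_+ f = \cI(f(\cdot)) + \cJ(\cdot)f(\cdot) + \cN_\gamma f$, where $\cJ(x)\tau = \sum_{|k|_\s<\zeta+\beta}\frac{X^k}{k!}\langle\Pi_x\tau,\partial^k P_+(x-\cdot)\rangle$ for $\tau\in\cT_\zeta$ and $\cN_\gamma f(x) = \sum_{|k|_\s<\gamma+\beta}\frac{X^k}{k!}\langle\cR f-\Pi_x f(x),\partial^k P_+(x-\cdot)\rangle$, and further splits each of these along $P_+=\sum_{n\ge0}P_n$. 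Two elementary observations are used throughout: by \eqref{Eq:ScalingPn}, up to a fixed constant $\partial^k P_n(x-\cdot) = 2^{-n(\beta-|k|_\s)}\eta^{2^{-n}}_x$ with $\eta$ proportional to an element of $\BB^r$; combined with \eqref{Eq:BoundRecons}, this shows that $\|\langle\cR f-\Pi_x f(x),\partial^k P_n(x-\cdot)\rangle\|_{L^p}$ equals $2^{-n(\gamma+\beta-|k|_\s)}$ times an $\ell^q(n)$-summable sequence of size $\lesssim\$f\$\|\Pi\|(1+\|\Gamma\|)$. I would first dispose of the non-polynomial levels: for $\bar\zeta\in\cA_{\gamma+\beta}\backslash\N$ one has $\bar\zeta=\zeta+\beta$ with $\zeta\in\cA_\gamma$ and $\cQ_{\bar\zeta}\cP^\gamma_+ f=\cI\,\cQ_\zeta f$, so that, using that $\cI$ is bounded level-wise and $\cI\Gamma-\Gamma\cI\in\bar\cT$, one gets $\cQ_{\bar\zeta}(\cP^\gamma_+ f(x+h)-\Gamma_{x+h,x}\cP^\gamma_+ f(x)) = \cI\,\cQ_\zeta(f(x+h)-\Gamma_{x+h,x}f(x))$ and, since $(\gamma+\beta)-\bar\zeta=\gamma-\zeta$, the local and translation bounds at these levels are inherited verbatim from those of $f$.

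The core of the argument is the bound at the polynomial levels. Fix $\bar\zeta=|k|_\s<\gamma+\beta$. Using the structural identities of \cite[Sec.~4]{Hairer2014} (which are purely algebraic and carry over unchanged), $\cQ_{\bar\zeta}(\cP^\gamma_+ f(x+h)-\Gamma_{x+h,x}\cP^\gamma_+ f(x))$ is a finite sum over $n\ge0$ of terms of two kinds: (i) $\langle\Pi_{x+h}\cQ_\zeta(f(x+h)-\Gamma_{x+h,x}f(x)),\partial^k P_n((x+h)-\cdot)\rangle$ with $\zeta>|k|_\s-\beta$; and (ii) pairings of $\cR f-\Pi_{x+h}f(x+h)$ (or of $\cR f-\Pi_x f(x)$) against the $h$-Taylor remainder of order $\gamma+\beta-|k|_\s$ of $\partial^k P_n((x+h)-\cdot)$ around $x$. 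For a given $h$ I would set $2^{-n_0-1}<\|h\|_\s\le2^{-n_0}$ and split $\sum_n=\sum_{n<n_0}+\sum_{n\ge n_0}$. Type (i) is bounded by $\|\Pi\|\,|f(x+h)-\Gamma_{x+h,x}f(x)|_\zeta\,2^{-n(\zeta+\beta-|k|_\s)}$, and the geometric sum in $n$ converges since $\zeta+\beta-|k|_\s>0$; dividing by $\|h\|_\s^{\gamma+\beta-|k|_\s}$, taking $L^p$ in $x$ and then the $L^q$-norm in $h$ over $B(0,1)$ exactly as in the proof of Theorem~\ref{Th:Reconstruction} gives a contribution $\lesssim\|\Pi\|(1+\|\Gamma\|)\$f\$$. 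For type (ii) one estimates the remainder directly (via \eqref{Eq:ScalingPn}) when $n\ge n_0$ and through its Taylor expansion (using that $P_0$ annihilates polynomials of degree $r$) when $n<n_0$, in both cases reducing it to a rescaled, scale-$2^{-n}$ test function against which $\cR f-\Pi_\bullet f(\bullet)$ is paired; the reconstruction bound \eqref{Eq:BoundRecons} then yields $\lesssim\$f\$\|\Pi\|(1+\|\Gamma\|)$ after the same dyadic bookkeeping. The local bound at level $\bar\zeta$ is the ``$h=0$'' instance of the same computation, and the convergence of the series $\sum_n$ to an element of $\cD^{\gamma+\beta}_{p,q}$ follows from the geometric decay just obtained. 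Here the nonintegrality of $\gamma$ and of $\gamma+\beta$ is what guarantees that every exponent entering the geometric series (such as $\gamma+\beta-|k|_\s$ and $\lceil\gamma+\beta\rceil-(\gamma+\beta)$) is strictly positive.

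For the identity \eqref{Eq:ConvolRecons}, note that $\cR\cP^\gamma_+ f$ and $P_+*\cR f$ are both distributions, so by the uniqueness part of Theorem~\ref{Th:Reconstruction} applied to $\cP^\gamma_+ f\in\cD^{\gamma+\beta}_{p,q}$ it suffices to check that $P_+*\cR f$ satisfies
\begin{equ}
\bigg\|\Big\|\sup_{\eta\in\BB^r}\frac{\big|\langle P_+*\cR f-\Pi_x(\cP^\gamma_+ f)(x),\eta^\lambda_x\rangle\big|}{\lambda^{\gamma+\beta}}\Big\|_{L^p}\bigg\|_{L^q_\lambda}<\infty\;.
\end{equ}
Inserting the admissibility identity \eqref{e:defPix} to expand $\Pi_x(\cP^\gamma_+ f)(x)$ and splitting $P_+=\sum_{n<N}P_n+\sum_{n\ge N}P_n$ with $2^{-N}\sim\lambda$, the quantity $\langle P_+*\cR f-\Pi_x(\cP^\gamma_+ f)(x),\eta^\lambda_x\rangle$ reduces, after the algebra of \cite[Sec.~4]{Hairer2014}, to a finite sum of pairings of $\cR f-\Pi_y f(y)$ (for $y=x$ or $y$ near $x$) against explicit rescaled test functions built from the $P_n$ and from $\eta^\lambda$, each of which is controlled by \eqref{Eq:BoundRecons} for $f$; this establishes the bound.

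Finally, all the estimates above are sums of terms depending multilinearly on $(\Pi,\Gamma)$ and linearly on $f$ (respectively on $\cR f-\Pi_\bullet f(\bullet)$), so the two-model bound is obtained by telescoping---replacing, one term at a time, $f$ by $f-\bar f$, $\Pi$ by $\Pi-\bar\Pi$ and $\Gamma$ by $\Gamma-\bar\Gamma$, and using \eqref{Eq:BoundRecons2} wherever \eqref{Eq:BoundRecons} was used. I expect the main obstacle to be the bookkeeping in the polynomial case: one has to control simultaneously the sum over the kernel scales $n$ and the sum over the increment/test scales $n_0$ (with $2^{-n_0}\sim\|h\|_\s$ or $\sim\lambda$) inside the $L^p$ and $\ell^q$ norms, organising the double dyadic sums with Jensen's inequality as in the proofs of Theorems~\ref{Th:EqDgamma} and~\ref{Th:Reconstruction}, and to check that each of the many geometric series that appear converges.
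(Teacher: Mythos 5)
Your proposal follows essentially the same route as the paper: separate the non-integer levels (inherited from $f$ via $\cI$) from the integer ones, split the integer-level increment along the dyadic decomposition of $P_+$ with threshold $2^{-n_0}\sim\|h\|_\s$, control the $\cR f-\Pi_\bullet f(\bullet)$ pairings via the reconstruction bound \eqref{Eq:BoundRecons}, use Jensen's inequality for the $\ell^q$ bookkeeping, invoke uniqueness of reconstruction for \eqref{Eq:ConvolRecons}, and telescope with \eqref{Eq:BoundRecons2} for the two-model estimate. One small inaccuracy worth noting: the paper actually switches between two algebraically equal expressions for $\cQ_k(\cP^\gamma_+ f(x+h)-\Gamma_{x+h,x}\cP^\gamma_+ f(x))$ --- \eqref{Eq:TranslationBoundKernel1} for $n<n_0$, \eqref{Eq:TranslationBoundKernel2} for $n\ge n_0$ --- and for the range $n<n_0$ the relevant tool is Lemma~\ref{Lemma:TaylorSpaceTime} (a pure Taylor rearrangement of $P^{k,\gamma'}_{n,x,y}$), not the vanishing moments of $P_0$; the latter only enter through the admissibility identity \eqref{e:defPix}.
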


Before we proceed to the proof, we introduce a few notations and state a useful lemma. We set $\gamma' := \gamma + \beta$. For all $k\in\N^d$, we set
\begin{equ}
P^{k,\gamma'}_{n,x,y}(\cdot) := \partial^k P_n(y-\cdot) - \sum_{\ell\in\N^d:|k+\ell|<\gamma'} \frac{(y-x)^\ell}{\ell!} \partial^{k+\ell} P_n(x-\cdot)\;,
\end{equ}
and $P^{k,\gamma'}_{x,y} = \sum_{n\geq 0} P^{k,\gamma'}_{n,x,y}$. Let $e_i$ be the unit vector of $\R^d$ in the direction $i\in\{1,\ldots,d\}$, and for every $\ell\in\N^d$ set $\m(\ell):=\inf\{i: \ell_i \ne 0\}$. We define
\begin{equ}
\partial \gamma' := \big\{\ell\in\N^d: |\ell|_\s >\gamma', |\ell-e_{\m(\ell)}|_\s < \gamma'\big\} \;.
\end{equ}
We then recall the following identity.
\begin{lemma}[Prop 11.1~\cite{Hairer2014}]\label{Lemma:TaylorSpaceTime}
For all $x,y\in\R^d$ and all $k\in\N^d$ such that $|k|_\s<\gamma'$, we have
\begin{equ}
P_{n,x,y}^{k,\gamma'}(\cdot) = \sum_{\ell:k+\ell \in \partial \gamma'} \int_{\R^d} \partial^{k+\ell} P_n(x+h-\cdot) \mu^{\ell}(y-x,dh)\;.
\end{equ}
Here, $\mu^{\ell}(y-x,dh)$ is a signed measure on $\R^d$, supported in the set $\{z\in\R^d: z_i \in [0,y_i-x_i]\}$ and whose total mass is given by $\frac{(y-x)^{\ell}}{\ell!}$.
\end{lemma}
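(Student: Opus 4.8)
This is the anisotropic multivariate Taylor formula with integral remainder recorded as Proposition~11.1 of~\cite{Hairer2014}, so the plan is to recall its proof. I would first observe that $P^{k,\gamma'}_{n,x,y}(\cdot)$ is, by its very definition, the remainder of the Taylor expansion at $x$, in the variable $y$, of the smooth function $y\mapsto\partial^k P_n(y-\cdot)$, truncated at those increments $(y-x)^\ell$ whose scaled degree $|k+\ell|_\s$ lies below $\gamma'$ (equivalently $|\ell|_\s<\gamma'-|k|_\s$). Writing $G:=\partial^k P_n$, so that $\partial^\ell_z\bigl(G(z-\cdot)\bigr)=\partial^{k+\ell}P_n(z-\cdot)$, the task reduces to rewriting this Taylor remainder of a generic smooth function $G$ in a form in which only the derivatives of order $k+\ell$ with $k+\ell\in\partial\gamma'$ appear, integrated against suitable signed measures.

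The proof proceeds by expanding $G(y-\cdot)$ in the variable $y$ around $x$ one Cartesian coordinate at a time, using at each step the elementary scalar Taylor formula with integral remainder,
\begin{equ}
g(b)=\sum_{0\le j<M}\frac{(b-a)^j}{j!}\,g^{(j)}(a)+\int_a^b\frac{(b-t)^{M-1}}{(M-1)!}\,g^{(M)}(t)\,dt\;,
\end{equ}
where, when coordinate $i$ is processed, the truncation order $M$ is taken to be the first integer at which the multi-index accumulated so far (counting the powers already extracted, shifted by $k$) acquires scaled degree exceeding $\gamma'$. Since $\s\in\N^d$ every scaled degree $|\ell|_\s$ is an integer, so the assumption $\gamma'=\gamma+\beta\notin\N$ guarantees that no scaled degree equals $\gamma'$ and the cutoff is unambiguous. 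Iterating over all $d$ coordinates and collecting terms, the ``polynomial'' contributions reassemble precisely into the Taylor sum subtracted in the definition of $P^{k,\gamma'}_{n,x,y}$, while every integral term produced carries a derivative $\partial^{k+\ell}P_n$ for some $\ell$; the combinatorial core of the argument is to check that the coordinate-by-coordinate bookkeeping can be organised so that these indices $\ell$ are in bijection with $\{\ell:k+\ell\in\partial\gamma'\}$ — with no index missing and none produced twice — which is exactly where the description of $\partial\gamma'$ through $\m(\cdot)$ is used. This is the step I expect to be the main obstacle.

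Finally, the measures $\mu^\ell(y-x,dh)$ are read off the iteration: each is a tensor product over the $d$ coordinates of a one–dimensional factor, namely the integral–remainder density $\frac{(y_i-x_i-h_i)^{\ell_i-1}}{(\ell_i-1)!}\,dh_i$ on the segment from $0$ to $y_i-x_i$ in the coordinate where the remainder was taken, together with (appropriately weighted) Dirac masses in the remaining coordinates, each one–dimensional factor having total mass $(y_i-x_i)^{\ell_i}/\ell_i!$ and support in the oriented interval between $0$ and $y_i-x_i$. Consequently $\mu^\ell(y-x,dh)$ has total mass $(y-x)^\ell/\ell!$ and is supported in $\{h:h_i\in[0,y_i-x_i]\}$, as claimed, and telescoping the contributions over $n$ to obtain $P^{k,\gamma'}_{x,y}$ is routine. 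Everything except the index bookkeeping of the middle step is elementary calculus, and the resulting identity is exactly the one recorded in~\cite{Hairer2014}.
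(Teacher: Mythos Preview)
The paper does not prove this lemma at all: it is stated with attribution to \cite[Prop.~11.1]{Hairer2014} and used as a black box in the proof of Theorem~\ref{Th:Conv}. Your proposal is therefore not competing against any argument in the present paper, but rather supplying the proof that the paper outsources to the reference.

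That said, your sketch is correct and matches the argument in \cite{Hairer2014}: one reduces to a generic smooth function, expands coordinate by coordinate via the one-variable Taylor formula with integral remainder, and the boundary set $\partial\gamma'$ together with the selector $\m(\cdot)$ encodes exactly which multi-indices survive as remainder terms. Your identification of the measures $\mu^\ell$ as tensor products of one-dimensional remainder densities and Diracs, with the stated support and total mass, is also accurate. One small remark: the final sentence about ``telescoping over $n$ to obtain $P^{k,\gamma'}_{x,y}$'' is extraneous, since the lemma is stated and used for a single $n$; the summation over $n$ happens later, inside the proof of Theorem~\ref{Th:Conv}.
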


\begin{proof}[of Theorem \ref{Th:Conv}] 
We start with the local bound of the $\cD^\gamma_{p,q}$-norm. For every $\zeta \in \cA_{\gamma'}\backslash\N$, the only contributions of $\cP_+^\gamma f$ at level $\zeta$ come from $\cI(f(x))$ and we have
\begin{equ}
\big\| \cQ_{\zeta+\beta}\cI(f(x)) \big\|_{L^p} \lesssim \big\| \cQ_{\zeta} f(x) \big\|_{L^p}\;,
\end{equ}
by the properties of $\cI$. Let us now consider $k\in\N^d$ such that $|k|_\s<\gamma'$. We have the identity
\begin{equs}\label{Eq:LocalBoundKernel}
k! \cQ_k \cP^\gamma_+ f(x) = \sum_{n\geq 0} \Big( &\sum_{\zeta\in\cA_\gamma: \zeta > |k|_\s-\beta} \langle \Pi_x \cQ_\zeta f(x) , \partial^k P_n(x-\cdot) \rangle\\
& + \langle \cR f - \Pi_x f(x) , \partial^k P_n(x-\cdot) \rangle\Big) \;.
\end{equs}
By (\ref{Eq:ScalingPn}), we have
\begin{equs}
\Big\| \langle \Pi_x \cQ_\zeta f(x) , \partial^k P_n(x-\cdot) \rangle\Big\|_{L^p} \lesssim \big\| \cQ_\zeta f(x) \big\|_{L^p} 2^{-n(\zeta+\beta-|k|)}\;,
\end{equs}
uniformly over all $n\geq 0$. The sum over all $n\geq 0$ of these norms is therefore bounded by a term of order $\$f\$$ as required. Applying Theorem \ref{Th:Reconstruction}, we obtain a similar bound for the second term on the right hand side of (\ref{Eq:LocalBoundKernel}).

We turn to the translation bound. Regarding the terms at non-integer levels, the bound derives from exactly the same argument as for the local bound. We focus on terms at integer levels. Let $k\in\N^d$ such that $|k|_\s < \gamma +\beta$. A simple computation based on~\cite[Lemma 5.16]{Hairer2014} ensures that
\begin{equs}\label{Eq:TranslationBoundKernel1}
{}&k!\cQ_k\big( \cP^\gamma_+ f(x+h) - \Gamma_{x+h,x} \cP^\gamma_+ f(x) \big)\\
&= \big\langle \cR f - \Pi_x f(x) , P^{k,\gamma'}_{x,x+h}\big\rangle\\
&\quad- \sum_{\substack{\zeta\in\cA_{\gamma}\\\zeta \leq |k|_\s-\beta}} \big\langle \Pi_{x+h}\cQ_\zeta\big(f(x+h) - \Gamma_{x+h,x} f(x)\big) , \partial^k P(x+h-\cdot)\big\rangle\;,
\end{equs}
which can also be written as
\begin{equs}\label{Eq:TranslationBoundKernel2}
{}&k!\cQ_k\big( \cP_+^\gamma f(x+h) - \Gamma_{x+h,x} \cP_+^\gamma f(x) \big)\\
&= \big\langle \cR f - \Pi_{x+h} f(x+h) , \partial^k P_+(x+h-\cdot)\big\rangle\\
&\quad-\big\langle \cR f - \Pi_{x} f(x) , \sum_{\ell\in\N^d:|k+\ell|_\s < \gamma'}\frac{h^\ell}{\ell!} \partial^{k+\ell} P_+(x-\cdot)\big\rangle\\
&\quad+ \sum_{\substack{\zeta\in\cA_{\gamma}\\\zeta > |k|_\s-\beta}} \big\langle \Pi_{x+h}\cQ_\zeta\big(f(x+h) - \Gamma_{x+h,x} f(x)\big) , \partial^k P_+(x+h-\cdot)\big\rangle\;.
\end{equs}
Let $n_0$ be the largest integer such that $2^{-n_0} \geq |h|$. According to the relative values of $n$ and $n_0$ we use either of these two expressions for the proof of the bound. We start with the case $n<n_0$. We have
\begin{equs}
{}&\Big\| \sum_{n<n_0}\frac{\langle \cR f - \Pi_x f(x) , P^{k,\gamma'}_{n,x,x+h}\rangle}{\|h\|_\s^{\gamma'-|k|_\s}} \Big\|_{L^p}\\
&\lesssim \sum_{n<n_0} \sum_{\ell \in \partial \gamma'}\frac{2^{-n(\gamma'-|\ell|_\s)} \|h\|_\s^{|\ell - k|_\s}}{\|h\|_\s^{\gamma'-|k|_\s}}\Big\| \sup_{\eta\in\BB^r}\frac{\big|\langle \cR f - \Pi_x f(x) , \eta^{2^{-n}}_x\rangle\big|}{2^{-n\gamma}} \Big\|_{L^p}\;,
\end{equs}
uniformly over all $h\in B(0,2^{-n_0})\backslash B(0,2^{-n_0-1})$ and all $n_0\geq 0$. Since
\begin{equ}
\sum_{n<n_0} \sum_{\ell \in \partial \gamma'}\frac{2^{-n(\gamma'-|\ell|_\s)} \|h\|_\s^{|\ell - k|_\s}}{\|h\|_\s^{\gamma'-|k|_\s}} \lesssim 1\;,
\end{equ}
uniformly over the same parameters, we get using Jensen's inequality that
\begin{equs}
{}&\bigg(\int_{h\in B(0,1)} \Big\| \sum_{n<n_0}\frac{\langle \cR f - \Pi_x f(x) , P^{k,\gamma'}_{n,x,x+h}\rangle}{\|h\|_\s^{\gamma'-|k|_\s}} \Big\|_{L^p}^q \frac{dh}{\|h\|_\s^{|\s|}} \bigg)^{\frac1{q}}\\
&\lesssim \bigg(\int_{h\in B(0,1)} \sum_{n<n_0}\sum_{\ell \in \partial \gamma'} 2^{-(n-n_0)(\gamma'-|\ell|_\s)}\\
&\qquad\qquad \times\Big\| \sup_{\eta\in\BB^r}\frac{\big|\langle \cR f - \Pi_x f(x) , \eta^{2^{-n}}_x\rangle\big|}{2^{-n\gamma}} \Big\|_{L^p}^q\frac{dh}{\|h\|_\s^{|\s|}} \bigg)^{\frac1{q}}\\
&\lesssim \bigg(\sum_{n\geq 0} \Big\| \sup_{\eta\in\BB^r}\frac{\big|\langle \cR f - \Pi_x f(x) , \eta^{2^{-n}}_x\rangle\big|}{2^{-n\gamma}} \Big\|_{L^p}^q\bigg)^{\frac1{q}}\;,
\end{equs}
which is bounded by a term of order $\$f\$$ by Theorem \ref{Th:Reconstruction}. The second term on the right hand side of (\ref{Eq:TranslationBoundKernel1}) can be bounded similarly.

We turn to the case $n\geq n_0$, and we use (\ref{Eq:TranslationBoundKernel2}). To bound the first term, we use a change of variable at the second line to get
\begin{equs}
{}&\Big\| \sum_{n\geq n_0}\frac{\langle \cR f - \Pi_{x+h} f(x+h) , \partial^k P_n(x+h-\cdot) \rangle}{|h|^{\gamma'-|k|}} \Big\|_{L^p}\\
&\lesssim \sum_{n\geq n_0} \Big\| \frac{\langle \cR f - \Pi_x f(x) , \partial^k P_n(x-\cdot) \rangle}{|h|^{\gamma'-|k|}} \Big\|_{L^p}\\
&\lesssim \sum_{n\geq n_0} \frac{2^{-n(\gamma'-|k|)}}{|h|^{\gamma'-|k|}} \Big\| \sup_{\eta\in\BB^r} \frac{\big|\langle \cR f - \Pi_x f(x) , \eta^{2^{-n}}_x \rangle\big|}{2^{-n\gamma}} \Big\|_{L^p}\;,
\end{equs}
uniformly over all $h\in B(0,2^{-n_0})\backslash B(0,2^{-n_0-1})$ and all $n_0\geq 0$. Since
\begin{equ}
\sum_{n\geq n_0} \frac{2^{-n(\gamma'-|k|)}}{|h|^{\gamma'-|k|}} \lesssim 1\;,
\end{equ}
uniformly over the same parameters, we apply Jensen's inequality to get
\begin{equs}
{}&\bigg(\int_{h\in B(0,1)} \Big\|\sum_{n\geq n_0}\frac{\langle \cR f - \Pi_{x+h} f(x+h) , \partial^k P_n(x+h-\cdot) \rangle}{|h|^{\gamma'-|k|}} \Big\|_{L^p}^q \frac{dh}{|h|^{|\s|}} \bigg)^{\frac1{q}}\\
&\lesssim \bigg(\int_{h\in B(0,1)}\sum_{n\geq n_0}\frac{2^{-n(\gamma'-|k|)}}{|h|^{\gamma'-|k|}} \Big\| \sup_{\eta\in\BB^r} \frac{\big|\langle \cR f - \Pi_x f(x) , \eta^{2^{-n}}_x \rangle\big|}{2^{-n\gamma}} \Big\|_{L^p}^q \frac{dh}{|h|^{|\s|}} \bigg)^{\frac1{q}}\\
&\lesssim \$f\$\;,
\end{equs}
as required. To bound the second and third terms arising from (\ref{Eq:TranslationBoundKernel2}), one proceeds similarly.

Let us now show that $\cR \cP^\gamma_+ f = P_+*\cR f$. By the uniqueness part of 
Theorem~\ref{Th:Reconstruction}, it suffices to show that
\begin{equs}
\bigg\| \Big\| \sup_{\eta \in \BB^r} \frac{\big|\langle P_+*\cR f - \Pi_x \cP^\gamma_+ f(x), \eta_x^\lambda\rangle \big|}{\lambda^{\gamma'}} \Big\|_{L^p} \bigg\|_{L^q_\lambda} < \infty\;.
\end{equs}
By the restriction property of the spaces $\cD^\gamma_{p,q}$ we can furthermore assume without loss of
generality that $\gamma' \in (0,1)$ which simplifies a number of expressions below.
We have
\begin{equs}
\langle P_+*\cR f - \Pi_x \cP^\gamma_+ f(x), \eta_x^\lambda\rangle &= \sum_{n\geq 0} \int_y \eta^\lambda_x(y) \Big(\big\langle \cR f , P_n(y-\cdot) \big\rangle- \langle \Pi_x f(x) , P_n(y-\cdot)\rangle\\
& - \langle \cR f - \Pi_x f(x) , P_n(x-\cdot)\rangle\Big)dy\\
&= \sum_{n\geq 0} \Big\langle \cR f - \Pi_x f(x) , \int_y \eta^\lambda_x(y) P^{0,\gamma'}_{n,x,y} dy \Big\rangle\;.
\end{equs}
Then, we argue differently according to the relative values of $2^{-n}$ and $\lambda$. Let $n_0$ be the largest integer such that $2^{-n_0} \geq \lambda$. If $n\leq n_0$, then $\int_y \eta^\lambda_x(y) P^{0,\gamma'}_{n,x,y}$ scales like a function $\psi^{2^{-n+1}}_x$ times a factor of order $2^{-n \beta}$, for some $\psi\in\BB^r$ (depending on $n$ and $y$), uniformly over all $n\leq n_0$. Therefore, we get
\begin{equs}
{}&\Big\| \sup_{\eta \in \BB^r} \sum_{n\leq n_0} \frac{\big|\langle \cR f - \Pi_x f(x) , \int_y \eta^\lambda_x(y) P^{\gamma,0}_{n,x,y-x} dy\rangle \big|}{\lambda^{\gamma'}} \Big\|_{L^p}\\
&\lesssim \sum_{n\leq n_0} \Big\| \sup_{\psi \in \BB^r} \frac{\big|\langle \cR f - \Pi_x f(x) , \psi^{2^{-n+1}}_x \rangle \big|}{2^{-n\gamma}} \Big\|_{L^p} 2^{-(n-n_0)\gamma'}\;,
\end{equs}
uniformly over all $n_0\geq 0$ and all $\lambda\in B(0,2^{-n_0})\backslash B(0,2^{-n_0-1})$. 
Jensen's inequality then yields
\begin{equs}
{}&\bigg\| \Big\| \sup_{\eta \in \BB^r} \sum_{n\leq n_0} \frac{\big|\langle \cR f - \Pi_x f(x) , \int_y \eta^\lambda_x(y) P^{0,\gamma}_{n,x,y} dy\rangle \big|}{\lambda^{\gamma'}} \Big\|_{L^p}\bigg\|_{L^q}\\
&\lesssim \bigg(\int_{\lambda\in B(0,1)} \sum_{n\leq n_0}2^{-(n-n_0)\gamma'} \Big\| \sup_{\psi \in \BB^r} \frac{\big|\langle \cR f - \Pi_x f(x) , \psi^{2^{-n}}_x \rangle \big|}{2^{-n\gamma}} \Big\|_{L^p}^q \frac{d\lambda}{\lambda}\bigg)^{\frac1{q}}\\
&\lesssim \bigg(\sum_{n\geq 0} \Big\| \sup_{\psi \in \BB^r} \frac{\big|\langle \cR f - \Pi_x f(x) , \psi^{2^{-n}}_x \rangle \big|}{2^{-n\gamma}} \Big\|_{L^p}^q \bigg)^{\frac1{q}}\;,
\end{equs}
which is of order $\$f\$$ as required. We turn to the case $n> n_0$. We bound separately the contributions coming from each of the two terms in $P^{\gamma',0}_{n,x,y}$. The function $\int_y \eta^\lambda_x(y) P_n(y-\cdot)dy$ scales like $\psi^{2\lambda}_x(\cdot)$ times a factor of order $2^{-\beta n}$, for some $\psi\in\BB^r$, uniformly over all $n\geq n_0$ and all $\lambda\in B(0,2^{-n_0})\backslash B(0,2^{-n_0-1})$. This being given, we have
\begin{equs}
{}&\bigg\| \Big\| \sup_{\eta \in \BB^r} \sum_{n> n_0} \frac{\big|\langle \cR f - \Pi_x f(x) , \int_y \eta^\lambda_x(y) P_n(y-\cdot) dy\rangle \big|}{\lambda^{\gamma'}} \Big\|_{L^p}\bigg\|_{L^q}\\
&\lesssim \bigg\| \sum_{n> n_0}2^{-\beta n}\Big\| \sup_{\psi \in \BB^r}  \frac{\big|\langle \cR f - \Pi_x f(x) , \psi^{2\lambda}_x \rangle \big|}{\lambda^{\gamma'}} \Big\|_{L^p}\bigg\|_{L^q}\\
&\lesssim \bigg\| \Big\| \sup_{\psi \in \BB^r}  \frac{\big|\langle \cR f - \Pi_x f(x) , \psi^{\lambda}_x \rangle \big|}{\lambda^{\gamma}} \Big\|_{L^p}\bigg\|_{L^q}\;,
\end{equs}
which is of order $\$f\$$ as required. The term with $P_n(y-\cdot)$ replaced by $P_n(x-\cdot)$
is bounded analogously, thus concluding the proof of (\ref{Eq:ConvolRecons}).

In the case where we deal with two models, the above arguments can be adapted, using the reconstruction bound (\ref{Eq:BoundRecons2}) as well as decompositions similar to what we did in (\ref{Eq:Decomp2models}).
\end{proof}

\bibliographystyle{Martin}
\bibliography{library_reconstruction}

\end{document}